\documentclass[12pt,a4paper,reqno]{amsart}
\usepackage{amsmath}
\usepackage{amsfonts, color,amssymb,fancyhdr}
\usepackage[colorlinks,linkcolor=blue, citecolor=red,anchorcolor=blue, pagebackref=true
]{hyperref}
 \usepackage[nobysame]{amsrefs}
 \def\MR#1{}                
\def\@bibmrnumber#1{}       
\def\@bib@mrreview#1{}      
\def\@bib@mathreviews#1{}   
 \usepackage{amsrefs}
\usepackage{amsthm}
\usepackage{tikz}
\usepackage{graphicx}
\usepackage{multirow}
\usepackage{float}
\usepackage{booktabs}

\usepackage{pdflscape}
\usepackage{geometry}
\usepackage{epic}
\usepackage{threeparttable}
\usepackage{subfigure}
\usepackage{url}

\usepackage{array} 
\usepackage{tabularx} 
\usepackage{booktabs} 

\def\Box{\vcenter{\vbox{\hrule\hbox{\vrule
     \vbox to 8.8pt{\hbox to 10pt{}\vfill}\vrule}\hrule}}}

\newcommand{\PGaL}{\textup{P}\Gamma \textup{L}}
\newcommand{\Tr}{\textup{Tr}}

\newcommand{\ra}{\rangle}

\newcommand{\bfc}{\boldsymbol{c}}

\newcommand{\bfv}{\boldsymbol{v}}

\newcommand{\bfx}{\boldsymbol{x}}
\newcommand{\la}{\langle}
\newcommand{\F}{{\mathbb F}}
\newcommand{\bbF}{{\mathbb F}}
\newcommand{\bbZ}{{\mathbb Z}}

\newcommand{\Z}{{\mathbb Z}}

\newcommand{\cC}{{\mathcal C}}

\newcommand{\cL}{{\mathcal L}}
\newcommand{\cLst}{{\mathcal L_{\rm st}}}
\newcommand{\cU}{{\mathcal U}}

\newcommand{\cS}{{\mathcal S}}

\newcommand{\PGL}{\textup{PGL}}

\newcommand{\GL}{\textup{GL}}
\newcommand{\GO}{\textup{GO}}
\newcommand{\SU}{\textup{SU}}
\newcommand{\SL}{\textup{SL}}
\newcommand{\Sp}{\textup{Sp}}

\newcommand{\AG}{\textup{AG}}
\newcommand{\PSL}{\textup{PSL}}

\newcommand{\Aut}{\textup{Aut}}
\newcommand{\PAut}{\textup{PAut}}

\newcommand{\tr}{\textup{Tr}}

\newcommand{\ord}{\textup{ord}}

\newtheorem{thm}{Theorem}
\newtheorem{problem}[thm]{Problem}
\newtheorem{lemma}[thm]{Lemma}
\newtheorem{corollary}[thm]{Corollary}
\newtheorem{proposition}[thm]{Proposition}
\newtheorem{definition}[thm]{Definition}

\newtheorem{example}[thm]{Example}\theoremstyle{definition}
\numberwithin{equation}{section}
\numberwithin{thm}{section}

\theoremstyle{definition}
\newtheorem{remark}[thm]{Remark}\theoremstyle{definition}
\newtheorem{fact}[thm]{Fact}
\newtheorem{construction}[thm]{Construction}

\newcommand{\Tm}[1]{Theorem~\protect\ref{#1}}
\newcommand{\Le}[1]{Lemma~\protect\ref{#1}}

\newcommand{\Sec}[1]{Section~\protect\ref{#1}}

\newcommand{\ben}{\begin{enumerate}}
\newcommand{\een}{\end{enumerate}}
\newcommand{\bit}{\begin{itemize}}
\newcommand{\eit}{\end{itemize}}

\DeclareMathOperator{\lcm}{lcm}

\def\mymedskip{\vskip\medskipamount}
\def\mymedbreak{\par \ifdim\lastskip<\medskipamount
  \removelastskip \penalty-100 \mymedskip \fi}
\def\myaftermedspace{\par \ifdim\lastskip<\medskipamount
  \removelastskip \penalty55\mymedskip\fi}
\newcommand{\eop}{{\unskip\nobreak\hfil\penalty50
          \hskip2em\hbox{}\nobreak\hfil$\Box$
          \parfillskip=0pt \finalhyphendemerits=0 \par}}

\newcommand{\gs}{\sigma}

\newcommand{\ga}{\alpha}
\newcommand{\gb}{\beta}

\newcommand{\gD}{\Delta}

\newcommand{\gl}{\lambda}

\newcommand{\btm}[1]{\begin{thm} \label{#1}}
\newcommand{\etm}{\end{thm}}
\newcommand{\btmn}[2]{\begin{teor}[#1] \label{#2}}
\newcommand{\etmn}{\end{teor}}
\newcommand{\ble}[1]{\begin{lemma} \label{#1}}
\newcommand{\ele}{\end{lemma}}
\newcommand{\bLe}[1]{\begin{Lemma} \label{#1}}
\newcommand{\eLe}{\end{Lemma}}
\newcommand{\blen}[2]{\begin{lem}[#1] \label{#2}}
\newcommand{\elen}{\end{lem}}
\newcommand{\bpn}[1]{\begin{prop} \label{#1}}
\newcommand{\epn}{\end{prop}}
\newcommand{\bex}[1]{\begin{examp} \label{#1}}
\newcommand{\eex}{\eop\end{examp}}
\newcommand{\bde}[1]{\begin{defi} \label{#1}}
\newcommand{\ede}{\end{defi}}
\newcommand{\bco}[1]{\begin{corollary} \label{#1}}
\newcommand{\eco}{\end{corollary}}
\newcommand{\bcon}[1]{\begin{con} \label{#1}}
\newcommand{\econ}{\end{con}}
\newcommand{\bfa}[1]{\begin{fact} \label{#1}}
\newcommand{\efa}{\end{fact}}
\newcommand{\bpr}[1]{\begin{problem} \label{#1}}
\newcommand{\epr}{\end{problem}}
\newcommand{\bprnn}[1]{\begin{problemnn} \label{#1}}
\newcommand{\eprnn}{\end{problemnn}}
\newcommand{\bprn}[2]{\begin{problem}[#1] \label{#2}}
\newcommand{\eprn}{\end{problem}}
\newcommand{\bexer}[1]{\begin{exercise} \label{#1}}
\newcommand{\eexer}{\end{exercise}}
\newcommand{\bre}[1]{\begin{remark} \label{#1}}
\newcommand{\ere}{\end{remark}}

\newcommand{\beq}{\begin{equation}}
\newcommand{\eeq}{\end{equation}}
\newcommand{\beql}[1]{\begin{equation} \label{#1}}
\newcommand{\eeql}{\end{equation}}
\newcommand{\beqa}{\begin{eqnarray*}}
\newcommand{\eeqa}{\end{eqnarray*}}
\newcommand{\beqal}[1]{\begin{eqnarray} \label{#1}}
\newcommand{\eeqal}{\end{eqnarray}}
\newcommand{\beqan}{\begin{eqnarray}}
\newcommand{\eeqan}{\end{eqnarray}}
\newcommand{\bpf}{\begin{proof}}
\newcommand{\epf}{\end{proof}}

\begin{document}

\title{The permutation automorphism groups of irreducible cyclic codes}
\author{Tao~Feng$^\star$, Henk D.L.~Hollmann$^{\star\star}$, Weicong~Li$^\dagger$, Qing~Xiang$^\ddagger$}

\date{\today}
\thanks{$^\star$Research partially supported by National Key Research and Developement Program of China 2025YFA1017700, National Natural Science Foundation of China under Grant No.  12225110}
\thanks{${^\star{^\star}}$Research partially supported by the Estonian Research Council grant PRG2531}
\thanks{$^\dagger$Research partially supported by the National Natural Science Foundation of China  Grant No. 12301422}
\thanks{$^\ddagger$Research partially supported by National Key Research and Developement Program of China 2025YFA1017700, the National Natural Science Foundation of China Grant No.  12131011}

\address{Tao~Feng, School of Mathematical Sciences,   Zhejiang University, Hangzhou, China.}
\email{tfeng@zju.edu.cn}
\address{Henk D.L.~Hollmann, Institute of Computer Science, University of Tartu,  51009 Tartu, Estonia.}
\email{henk.d.l.hollmann@ut.ee}
\address{Weicong~Li, Department of Mathematics, School of Sciences, Great Bay University, Dongguan, China.}
\email{liweicong@gbu.edu.cn}
\address{Qing~Xiang,  Department of Mathematics and Shenzhen International Center for Mathematics, Southern University of Science and Technology, Shenzhen, China.}
\email{xiangq@sustech.edu.cn}

\begin{abstract}
 The study of permutation automorphism groups of cyclic codes is a central topic in algebraic coding theory. A cyclic code over $\mathbb{F}_q$ is called irreducible if its check polynomial is irreducible over $\F_q$. Such a code is standard if its permutation automorphism group is equal to the group generated by the cyclic shift and the Frobenius automorphism, and non-standard otherwise. In this paper, we give a complete classification of all non-standard non-degenerate irreducible cyclic codes, using the classification of finite simple groups. Our result shows that, apart from a small number of explicit exceptional families and their descendants under certain secondary constructions, every non-degenerate irreducible cyclic code is standard, and up to four explicit exceptions, every degenerate cyclic code is non-standard.

This classification has several consequences. First, it yields a general description of non-standard linear recurring sequence subgroups, extending the earlier work of Brison and Nogueira; secondly it establishes the Schmidt-White conjecture for all non-standard irreducible cyclic codes. Moreover, our results provide strong evidence in support of the conjecture of Berger and Charpin that almost all cyclic codes are standard.

\medskip
\noindent{{\it Keywords\/}: irreducible cyclic codes, permutation group, non-standard, Aschbacher's theorem, linear recurring sequence subgroups.}

\smallskip

\noindent {{\it MSC (2020)\/}: 94B15, 20B25, 05E20, 05E18}
\end{abstract}
	\maketitle
	\tableofcontents
\section{Introduction}

Let $\F_q$ be the finite field of size $q$, and let $n$ be a positive integer. A $q$-ary linear code~$C$ of length $n$ is an $\F_q$-linear subspace of  $\F_{q}^n$; its {\em dimension\/}, denoted by~$\dim(C)$, is  simply the dimension of this subspace. The vectors in $C$ are called the {\it codewords} of $C$. In this paper, we are dealing with cyclic codes, so as usual, we enumerate the coordinate positions with $ 0, 1, \ldots, n-1$ and identify them with the elements of $\mathbb {Z} _ n$, the (additive) group of integers modulo~$n$. The symmetric group $S_n$, considered as the group of all permutations on~$\bbZ_n$, acts on the vectors of~$\F_{q}^n$ by permuting the coordinate positions, that is, for all $\pi\in S_n$ and for all $\bfv=(v_0,v_1,\ldots, v_{n-1})\in \F_q^n$, we have $\bfv^\pi=(v_{\pi^{-1} (0)},\, \ldots,\, v_{\pi^{-1} (n-1)})$. The set-wise stabilizer of~$C$, $\mathrm{Stab}_{S_n}(C)$, under such an action is called the {\it permutation automorphism group} of~$C$, denoted by $\PAut(C)$. We say that the linear code $C$ is a \textit{cyclic} code if $\PAut(C)$ contains the cyclic shift of order~$n$.

Let $C$ be a $q$-ary linear code of length~$n$. A codeword $\bfc=(c_0,\ldots,c_{n-1})\in C$ is naturally identified with the polynomial $c(x)=c_0+c_1x+\cdots+c_{n-1}x^{n-1}$ in the quotient ring $\F_q[x]/(x^n-1)$, and a $q$-ary code of length~$n$ is just a subset of polynomials in $\F_q[x]/(x^n-1)$.  As a result, a linear code is identified with a linear subspace of $\F_q[x]/(x^n-1)$, and a cyclic code is an ideal of $\F_q[x]/(x^n-1)$.

In this paper,  when dealing with cyclic codes, we will assume that $\gcd(n,q)=1$; note that, as a consequence, $x^n-1$ has no multiple zeros. Observe that any ideal of $\F_q[x]/(x^n-1)$ is principal, so a cyclic code~$C$ is generated by a monic polynomial $g(x)$ dividing $x^n-1$, which is called the {\it generator polynomial} of $C$; in that case, we call $h(x):=(x^n-1)/g(x)$ the {\it check} polynomial of $C$. We refer to the zeros of~$g(x)$ in the algebraic closure $\overline{\F_q}$ of $\bbF_q$ as the {\em zeros\/} of~$C$. We will write $C_{n,h,q}$ to denote the $q$-ary cyclic code of length~$n$ with check polynomial~$h(x)$; note that the dimension of this code is equal to the degree of~$h(x)$. Moreover, we will write $C_{h,q}$ to denote the code $C_{n',h,q}$ with $n'$ equal to the order of~$h(x)$, where the order of $h(x)$ is the smallest integer $k$ such that $h(x)\mid (x^k-1)$.
The cyclic code $C_{n,h,q}$ is called {\em degenerate\/} if the order~$n'$ of~$h(x)$ is smaller than $n$ and {non-degenerate\/} otherwise \cite{Mac1977ErrorCorectcode}*{p. 223}. In the case when $C_{n,h,q}$ is degenerate, we have that $n'$ divides $n$, and $C_{n,h,q}$ is {\em periodic\/}, that is, the codewords are the words of the form $(\bfc', \bfc', \ldots, \bfc')$ with $\bfc'$ a codeword of the (non-degenerate) code $C_{h,q}$ (see, e.g., \cite{Hollmann2023}*{Lemma 5.22}); 
 it follows that $C_{n,h,q}$ and the subcode~$C_{h,q}$ have the same number of nonzero Hamming weights (see \cite{Wolfmann}*{Corollary 11}). 
A $q$-ary cyclic code $C$ is called {\it irreducible} if $C$ is a minimal ideal in the quotient ring $\F_q[x]/(x^n-1)$; equivalently,  $C$ is irreducible if the check polynomial $h(x)$ is irreducible over~$\F_q$. For further details on linear codes and cyclic codes, please refer to \cites{Huffman1998Handbook, Mac1977ErrorCorectcode, GTMcoding1992vanLint, Rot06}.

There are close connections between codes and groups, cf. \cite{Huffman1998Handbook}. The automorphism group of a code provides information about its structure and can help design efficient decoding algorithms, cf. \cite{Huffman1998Handbook}*{Section 8} and \cite{Geiselhart2021}. The problem of determining the class of finite groups that can arise as the permutation automorphism group of cyclic codes has attracted much attention, cf. \cites{Bienert2010AutCcode, Guenda, Guenda2013, Guenda2017, MaYan2024}. In general, it is a difficult problem to determine the automorphism group of any specific cyclic code or extended cyclic code, and ``the best way seems to be the study of special classes," to quote \cite{OpenPcode}*{Section 3.5}. There has been extensive research in this direction, cf. \cites{Knapp1980,Berger1993AutRMcode,Huffman1995AutQRcode}. Most notably, Berger and Charpin developed a method to effectively determine the permutation groups of affine invariant extended cyclic codes, cf. \cites{Beger1996AutECcode,Berger1999AutBCH}.

The permutation automorphism group $\PAut(C)$ of a degenerate cyclic code $C=C_{n,h,q}$ can be expressed in terms of the non-degenerate code $C'=C_{h,q}$ as $\PAut(C)=S_k\wr \PAut(C')$ (wreath product, a semidirect product $S_k^{n'}\rtimes \PAut(C')$, see \Sec{SSL_group}), where $n'$ denotes the order of~$h(x)$ and $k:=n/n'$ (see, e.g., \cite{Hollmann2023}*{Corollary 5.24}). So when studying the permutation automorphisms of cyclic codes, we can restrict our attention to {\em non-degenerate\/} cyclic codes.
It is widely believed that in most cases, the permutation automorphism group $\PAut(C)$ of a $q$-ary non-degenerate cyclic code~$C$ is very small. To make this statement precise, we need some notation. Let $\AG(n)$ denote the group of affine transformations on~$\bbZ_n$. For a $q$-ary cyclic code $C$ of length~$n$, define $\PAut_{\rm st}(C):=\PAut(C)\cap \AG(n)$. It is well known that $\PAut_{\rm st}(C)$ contains the cyclic shift $\gs: x\mapsto x+1$ and the Frobenius map $\psi: x\mapsto qx$ \cite{GTMcoding1992vanLint}*{Section 6.1}, \cite{PHB98}*{Corollary 5.19}. In addition, if $C$ is non-degenerate and irreducible, then $\PAut_{\rm st}(C)=\la \gs, \psi\ra$, the subgroup of~$\AG(n)$ generated by~$\gs$ and~$\psi$ \cite{Hollmann2023}*{Corollary 5.14} (for more details, see Section~\ref{LSSnot}; see also \cite{Hollmann2023}*{Theorem 5.28}).
We will refer to a permutation automorphism of a cyclic code~$C$ as {\em standard\/} if it is contained in~$\PAut_{\rm st}(C)$, and as {\em non-standard\/} otherwise; in addition, we will refer to a non-degenerate cyclic code $C$ as {\em standard\/} if every permutation automorphism of~$C$ is standard, and as {\em non-standard\/} otherwise.

So if $C$ is a non-degenerate irreducible cyclic code, then $\PAut(C)$ always contains the group of standard permutation automorphisms generated by the cyclic shift~$\gs$ and the Frobenius map~$\psi$; it is believed that in almost all cases, $\PAut(C)$ is not larger; that is, in general, $C$ is standard. In this paper, we give a complete classification of all non-standard non-degenerate irreducible cyclic codes by using the classification of finite simple groups (CFSG). To be precise, our main result is the following.
\begin{thm}\label{thm_mainCode}
Let $C$ be a non-degenerate irreducible cyclic code of length~$n$ over~$\F_q$, where we assume that $\gcd(n,q)=1$.   If $C$ is non-standard, then $C$ is one of the cyclic codes in Examples~\ref{exa_repetition}-\ref{exa_imp}, or it can be obtained from one of the codes in Examples~\ref{exa_repetition}-\ref{exa_imp} by repeated uses of Constructions \ref{exa_ext}-\ref{exa_product}.
\end{thm}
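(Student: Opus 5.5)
The plan is to translate the coding problem into a problem about subgroups of $\GL(m,q)$ acting on a projective-type space, and then apply Aschbacher's theorem together with the classification of finite simple groups.

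\emph{Step 1: the trace representation.} Let $m=\deg h$ and let $\beta\in\F_{q^m}^*$ be a root of $h$ of order $n$. Put $N:=(q^m-1)/n$ and $H:=\la\beta\ra$, so that $H$ has index $N$ in $\F_{q^m}^*$. Every codeword has the form $c_a=(\Tr(a\beta^i))_{i\in\bbZ_n}$ with $a\in\F_{q^m}$, and non-degeneracy means that the coordinates correspond bijectively to the elements of $H$. A permutation $\pi$ of $H$ preserves $C$ exactly when it is induced by some $g\in\GL(m,q)$, viewing $\F_{q^m}=\F_q^m$, that maps the set $H$ onto itself. To see this, note that the map $x\mapsto(\Tr(ax))_a$ identifies the coordinates with $m$-dimensional vectors in $C^\perp$-dual form, and a permutation preserving $C$ acts linearly on the coordinate functionals. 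Hence
$$\PAut(C)\cong \{g\in\GL(m,q): H^g=H\}\big/ (\text{kernel}),$$
and since $H$ spans $\F_{q^m}$ the kernel is trivial. The standard group $\la\gs,\psi\ra$ corresponds to $H\rtimes\la x\mapsto x^q\ra\le\Gamma L(1,q^m)$. So $C$ is non-standard if and only if the setwise stabilizer $G=\GL(m,q)_H$ is not contained in $\Gamma L(1,q^m)$. Moreover $G$ contains the Singer-type cyclic subgroup $H$, which acts regularly on $H$.

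\emph{Step 2: reduction to overgroups of a large cyclic subgroup.} The group $G$ contains the cyclic group $H$ of order $(q^m-1)/N$. I would first treat the case where $\la H\ra_{\F_q}$ generates a proper subfield $\F_{q^d}$, that is, where $H$ lies in $\F_{q^d}^*$ times a coset structure. This is the point where the secondary constructions (Constructions~\ref{exa_ext}--\ref{exa_product}) arise: field-extension/concatenation, and product and tensor decompositions of $H$. I would show that a reducible or imprimitive $G$, or one preserving an extension-field or tensor structure, forces $C$ to be obtained from a smaller code by one of these constructions, and then argue by induction on $m$.

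\emph{Step 3: Aschbacher analysis.} When $G$ is irreducible and not of the above types, I would run through Aschbacher's classes. Overgroups of cyclic groups of large order, in the sense of primitive prime divisors, are classified by Bamberg--Penttila and Guralnick--Penttila--Praeger--Saxl type results. Applying these, the possibilities are: $G$ contains $\SL(m,q)$, which gives $H=\F_{q^m}^*$ and the simplex/repetition-type codes; a classical group $\Sp$, $\SU$ or $\Omega$, where $H$ must be a union of orbits, giving isotropic-type or quadric examples; nearly simple examples of small dimension; and subfield or imprimitive groups, which give the remaining Examples~\ref{exa_repetition}--\ref{exa_imp}. In each case one checks directly which $H$ (a subgroup of $\F_{q^m}^*$ that is also a union of $G$-orbits, hence an orbit of $G$ since $G$ is transitive on $H$) are possible. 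This leads to numerical conditions comparing $|H|$ with the orbit lengths of the candidate groups.

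\emph{Main obstacle.} The hardest step is the case where $N$ is large, so that $H$ is small and primitive prime divisor arguments may fail, for instance when $|H|$ has no ppd of $q^m-1$. Here I would first try to show that the $\F_q$-span condition, together with transitivity of $G$ on $H$, forces a ppd of $q^e-1$ for a sufficiently large $e$, reducing to the Step 2 descent otherwise. The leftover small cases would then be handled by orbit-length counting and explicit computation.
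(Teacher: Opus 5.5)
\textbf{There is a genuine gap in the class $\cS$ step.} Your framework matches the paper's: identify $\PAut(C)$ with the stabilizer $G$ of $H=\cU_{n,q}$ in $\GL(m,q)$, run through Aschbacher's classes, and let the geometric classes produce the constructions. But you have no working tool for the decisive non-geometric class $\cS$.

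The Guralnick--Penttila--Praeger--Saxl and Bamberg--Penttila classifications you invoke apply only when a primitive prime divisor of $q^e-1$ with $e>m/2$ divides $|H|$. The span and transitivity conditions cannot force this. In Example~\ref{exa_imp} one has $\pi(k)\subseteq\pi(n')$ by Proposition~\ref{prop_nn0rChar}, so every prime $\ell\mid n$ satisfies $\ord_\ell(q)\mid m/k\le m/2$; for instance $q=3$, $n=16$, $m=4$. What you actually need is the dichotomy ``large ppd, or $G$ is geometric'', and that is precisely the unproved content of the $\cS$ case. Without it you also have no bound on $m$, so the ``nearly simple examples of small dimension'' cannot be enumerated.

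Class $\cC_6$ is also missing from your plan. The paper handles it by noting that $\bar\psi$ has order $m=r^k$ inside $r^{2k}.\Sp_{2k}(r)$. Via Lemma~\ref{lem_exponent} this bounds $k$, and a short case analysis follows.

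\textbf{The missing idea is Lemma~\ref{lem_C9faithful}.} Let $\overline G\le\PGL_m(q)$ act on the $n_1=n/n_0$ points $X=\{\la\xi^i\ra_{\F_q}\}$. The kernel consists of maps that are diagonal in the basis $1,\xi,\dots,\xi^{m-1}$, hence is solvable. So an almost simple $\overline G$ acts faithfully on $X$, and $\bar\sigma$ is a regular $n_1$-cycle. The Li--Praeger classification of almost simple groups with a transitive cyclic subgroup then leaves only $M_{23}$, $M_{11}$, $\PSL_2(11)$, $A_{n_1}$, $S_{n_1}$ and $\PGL_d(q')\le\overline G\le\PGaL_d(q')$, with no arithmetic hypothesis on $n$.

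Combine this with Lemma~\ref{lem_ordbarpsi}: either Example~\ref{exa_imp} occurs, or $\bar\psi$ acts on $X$ with order $m=\ord_{n_1}(q)$. Then $m$ is controlled by the normalizer of the regular cycle ($m=11$, $m=5$, $m\mid\varphi(n_1)$, or $m\mid df_1$). Only at that point do minimal-degree bounds (Kleidman--Liebeck, Landazuri--Seitz, Hiss--Malle) and explicit orbit checks finish the argument.

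\textbf{Smaller points.}
\begin{enumerate}
\item Your ``proper subfield'' case in Step~2 is vacuous, since $\F_q(\beta)=\F_{q^m}$.
\item Imprimitive ($\cC_2$) and tensor-induced ($\cC_7$) stabilizers yield the primary family Example~\ref{exa_imp}, not a construction.
\item The subfield class $\cC_5$ keeps $m$ and lowers $q$, so induction on $m$ alone does not suffice.
\item The duals of repetition codes come from $S_n$ acting on the deleted permutation module (class $\cS$). It is $\SL(m,q)$ that gives Example~\ref{exa_whole}.
\end{enumerate}
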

 We remark that, in general, a degenerate cyclic code $C$ of length~$n$ satisfies $\PAut(C)\nsubseteq\AG(n)$, with exactly 
 four (known) exceptions, cf. Lemma \ref{lem_deg}. We further remark that we also believe that if $C$ is an arbitrary non-degenerate cyclic code of length~$n$, even without the condition that $\gcd(n,q)=1$, then in general $C$ is standard, that is, $\PAut(C)\subseteq\AG(n)$; of course it will be much harder (if not impossible) to classify all the exceptions in this more general case.

In the remainder of this section, we will focus on $q$-ary non-degenerate irreducible cyclic codes of length~$n$, so with a check polynomial of order~$n$.  As was observed in \cite{Hollmann2023}*{Proposition 5.19}, all $q$-ary non-degenerate irreducible codes of length~$n$ are equivalent under a permutation of the coordinates. Hence, being non-standard depends only on the pair $(n,q)$. So in this paper, we will sometimes refer to a pair $(n,q)$ as being non-standard if some (hence every) $q$-ary non-degenerate irreducible cyclic code of length~$n$ is non-standard. We remark that this is the approach taken in~\cites{Hollmann2022ISIT,Hollmann2023}; note that, somewhat confusingly, these papers use the notion of ``irreducible cyclic code'' in the sense of ``non-degenerate irreducible cyclic code''; that is,  when speaking of irreducible cyclic codes, the author of  \cites{Hollmann2022ISIT,Hollmann2023} really meant non-degenerate irreducible cyclic codes.

Let $\cU_{n,q}$ be {the} multiplicative subgroup of order $n$ in some extension field $\F_{q^m}$ of $\F_q$, where $\gcd(n,q)=1$ and $n\mid (q^m-1)$. Note that $\cU_{n,q}$ is cyclic. Suppose the elements of $\cU_{n,q}$ can be arranged in a non-cyclic way such that the corresponding periodic sequence of period $n$ is a linear recurring sequence over $\F_q$ and has an irreducible characteristic polynomial. In that case, we call $\cU_{n,q}$ a non-standard linear recurring sequence subgroup of $\F_{q^m}$. Brison and Nogueira investigated such subgroups in a series of papers \cites{Brison2003,Brison2008,Brison2009,Brison2010,Brison2014,Brison2021}. By \cite{Hollmann2023}*{Corollary 4.13}, a non-degenerate irreducible cyclic code of length $n$ over $\F_q$ is non-standard if and only if $\cU_{n,q}$ is a non-standard linear recurring sequence subgroup, which occurs if and only if the pair $(n,q)$ is non-standard. Here, note that the check polynomials of non-degenerate $q$-ary irreducible codes of length~$n$ are precisely the minimal polynomials of the generators of~$\cU_{n,q}$. As a consequence of Theorem \ref{thm_mainCode}, we have generalized the results of Brison and Nogueira to a great extent, see Section \ref{sec_lrcc} for details.

There has been extensive work on the determination of the weight distributions of irreducible cyclic codes due to real-world applications, cf. \cites{DelGoe70,McEliece74,Langevin96,Ding}. This problem is extremely difficult in general, so codes with few weights are the focus of study in the literature. Two-weight irreducible cyclic codes are of particular interest, due to their connections with two-intersection sets in projective spaces, strongly regular Cayley graphs, and subdifference sets of the Singer difference set, cf. \cite{Schmidt2002}. Schmidt and White gave a necessary and sufficient numerical condition for an irreducible cyclic code to have at most two nonzero weights in \cite{Schmidt2002}. Their arguments involve Stickelberger's theorem on Gauss sums and the Parseval identity in discrete Fourier analysis.  In the same paper, they made the conjecture that all two-weight irreducible cyclic codes are either subfield codes, semiprimitive codes, or one of the eleven explicit codes with relatively small parameters. We refer to \cite{Vega15} for some remarks by Vega on the conjecture, and for definitions of the above terms. As an application of our main theorem, in Section \ref{sec:NSIC-2wt}, we confirm the Schmidt-White conjecture for non-standard irreducible cyclic codes by using Theorem \ref{thm_mainCode}.

The paper is organized as follows. In Section \ref{sec_prelim}, we first establish our notation, including some group-theoretic notation, and some important facts that we use throughout this paper. Then we justify the remark about degenerate cyclic codes following~\Tm{thm_mainCode}. Subsequently, we present four families of non-standard non-degenerate irreducible cyclic codes and four secondary constructions of new non-standard codes from old ones in Section \ref{sec_examples}.  In Section \ref{sec_newNSIC}, we further investigate one of the families of non-standard non-degenerate irreducible cyclic codes. In Section \ref{sec_mainproof}, we prove our main theorem. We embed $\PAut(C)$ into a classical group and examine all the possible Aschbacher classes. For dealing with the Aschbacher class $\cS$, the classification of finite permutation groups with a transitive cyclic subgroup in \cite{LiPraeger2012} plays a crucial role. In Section \ref{sec_application}, we discuss the applications of our results to linear recurring sequence subgroups and the Schmidt-White conjecture for non-degenerate irreducible cyclic codes.  We also consider the asymptotic behavior of the density of non-standard pairs in connection with the Berger-Charpin conjecture. We conclude this paper with some remarks on related open problems and future research directions.

\section{Preliminaries}\label{sec_prelim}
\subsection{\label{LSSnot}Notation}

Let $q$ be a prime power, and let $\F_q$ be the finite field with $q$ elements. We write $\F_q^*:=\F_q\setminus\{0\}$. Given two subsets $A,B$ of $\F_q^*$, we define $A\cdot B=\{xy: x\in A,\, y\in B\}$. Let $n\ge 1$ be an integer. We denote the additive group of integers modulo~$n$ by~$\bbZ_n$, and we write~$\bbZ_n^*$ to denote the multiplicative group of units of~$\bbZ_n$. The {\em Euler totient function\/} $\varphi$ is defined by $\varphi(n)=|\bbZ_n^*|$. The {\em order\/} of $q$ modulo $n$, written as $\ord_n(q)$, is the smallest positive integer $m$ such that $n \mid (q^m-1)$.

Let $f(x)=f_0+f_1x+\cdots+f_kx^k\in\bbF_q[x]$ with $f_k\neq 0$. We write $\deg(f)$ to denote the {\em degree\/} $k$ of~$f(x)$. If also $f_0\neq 0$, then there exists a unique smallest positive integer~$n$ such that $f(x) \mid (x^n-1)$, which is denoted by $\ord(f)$ \cite{FiniteField}*{Lemma 3.1}. 

Let $C$ be a $q$-ary linear code of length $n$, that is, an $\bbF_q$-linear subspace of~$\bbF_q^n$. In this paper, we identify the coordinate positions $0,1,\ldots, n-1$ of vectors in~$\bbF_q^n$ with the corresponding elements in $\bbZ_n$, the additive group of integers modulo~$n$. The dual code of $C$ is $C^\perp=\{\bfx\in\F_q^n \mid \mbox{$(\bfx,\bfc)=0$ for all $\bfc\in C$}\}$, where $(\bfx,\bfc)=x_0c_0+x_1c_1+\cdots+x_{n-1}c_{n-1}$ is the Euclidean inner product on~$\F_q^n$. 
We will use $C_{n,h,q}$ to denote the $q$-ary cyclic code of length~$n$ with check polynomial $h(x)$, where $h(x)\mid (x^n-1)$; if $\ord(h)=n$, then we also denote this code by $C_{h,q}$. We say that $C_{n,h,q}$ is \emph{non-degenerate} if $\ord(h)=n$ and \emph{degenerate} if~$\ord(h)<n$ \cite{Mac1977ErrorCorectcode}*{p. 223}. Note that since we assume that $h(x)\mid (x^n-1)$, we have $\ord(h)\mid n$.

Recall that the symmetric group $S_n$ acts on the vectors of $\F_q^n$ as follows: for all $\pi\in S_n$ and for all $\bfv=(v_0, v_1, \ldots, v_{n-1})\in \F_q^n$, we have $\bfv^\pi=(v_{\pi^{-1} (0)},\, \ldots,\, v_{\pi^{-1} (n-1)})$. We write $\AG(n)$ to denote the affine group  on $\mathbb{Z}_n$ consisting of all maps $x\mapsto tx+a$ on~$\bbZ_n$ with $a\in \bbZ_n$ and $t\in\bbZ_n^*$, whose order is $n\varphi(n)$. If a permutation $\pi$ 
(in its action on vectors) stabilizes~$C$ set-wise, we call $\pi$ a {\it permutation automorphism} of $C$. Let $\PAut(C)$ be the group of all permutation automorphisms of $C$. If $\PAut(C)$ contains the full cycle $\sigma=(0,1,\cdots, n-1)$, then $C$ is called a {\it cyclic} code. We also note that $C$ and $C^\perp$ share the same permutation automorphism group, that is, $\PAut(C^\perp)=\PAut(C)$.

\subsection{\label{SSL_group}Group theoretic notation and results}\label{subsec_grouptheory}

In this subsection, we introduce the group theoretic notation and facts that we shall use. We mainly follow the same notation as in \cite{BHR407}*{Chapter 1.2}. Let $C_n$ denote the \textit{cyclic group} of order $n$, and let $A_n$, $S_n$ be the \textit{alternating group} and the \textit{full symmetric group} on $n$ symbols, respectively. For 
a finite group~$G$, we write $Z(G)$ for its center, and 
{we write} $o(g)$ for the order of $g\in G$. The exponent 
of~$G$ is the least common multiple of the orders of all its elements. For $g,h\in G$, let $g^h=h^{-1}gh$ and $[g,h]=g^{-1}h^{-1}gh$. We will denote the derived group by $G'=\langle [g,h]:\, g,h\in G\rangle$, and write $G^{(\infty)}$ for the last term of the derived series of $G$.  A group $G$ is called \textit{almost simple} if $S\leq G\leq \Aut(S)$ for some non-abelian simple group $S$.  A group $G$ is {\it quasisimple} if $G=G'$ and $G/Z(G)$ is a non-abelian simple group.

{
Let $G$ and $H$ be groups acting on finite sets $\Omega$ and $I$, respectively. The wreath product of $G$ by $H$, denoted by $G\wr H$, is a semidirect product $G^I\rtimes H$, where $G^I$ is the direct product of $|I|$ copies of $G$, and $H$ acts on $G^I$ by permuting the coordinates: $h: (g_j)_{j\in I}\mapsto (g_{h^{-1}(j)})_{j\in I}$. For $(g_i)_{i\in I}\in G^I$ and $h\in H$,  the group element $((g_i)_{i\in I}, h  )$ maps $(a_i)_{i\in I}$ to $\big(g_{h^{-1}}(i) (a_{h^{-1}(i)})\big)_{i\in I}$~in~$\Omega^I$.
In particular, $|G\wr H|=|G|^{|I|}|H|$. See \cites{MR2192256, PP1968} for more details on the wreath product.
}

For a vector space $V$ over a finite field $\F$,  we write $\GL(V)$ for the group of all invertible $\F$-linear transformations of $V$. We have $Z(\GL(V))=\{\lambda\cdot \textup{id}_V:\lambda\in\F^*\}$, where $\textup{id}_V$ is the identity map on $V$. We use $\PGL(V)$ for the projective linear group $\GL(V)/Z(\GL(V))$. If $G$ is a subgroup of $\GL(V)$, we use $\bar{G}$ for its quotient group in $\PGL(V)$. If $\dim V=m$ and $\F=\F_q$, we write $\GL_m(q)$ for $\GL(V)$.

Let $V_1$, $V_2$ be two vector spaces over $\F$ of dimension $m_1,m_2$,  and let $H,K$ be subgroups of $\GL(V_1)$ and $\GL(V_2)$ respectively. The central product $H\circ K$ is the quotient group $(H\times K)/N$, where $N=\{(\lambda I_{m_1},\,\lambda^{-1}I_{m_2}):\ \lambda\in\F\setminus\{0\},\ \lambda\cdot \textup{id}_{V_1}\in H,\ \lambda\cdot \textup{id}_{V_2}\in K\}$. It acts on the tensor space $V_1\otimes V_2$ as follows: $g_1\circ g_2$ maps $v_1\otimes v_2$ to $g_1(v_1)\otimes g_2(v_2)$.
\begin{lemma}\label{lem_irre}
Let $g\in \GL_m(q)$ have order $n$ and {let} $m_g(x)$ be its minimal polynomial over $\F_q$. If~$\la g\ra$ acts irreducibly on $V\cong \F_{q^m}$, then $\ord_n(q)=m$.
\end{lemma}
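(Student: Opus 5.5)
Since $\la g\ra$ is a cyclic group acting irreducibly on $V$, I would use Schur's lemma to turn $\la g\ra$ into the multiplicative group of a subfield, and then compare the size of that subfield with $m$.

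First, the subalgebra $A:=\F_q[g]\subseteq \mathrm{End}_{\F_q}(V)$ is commutative, and $V$ is a simple $A$-module. By Schur's lemma, $\mathrm{End}_A(V)$ is a division ring. By Wedderburn's little theorem it is a finite field $E$, and since $A$ is commutative, $A\subseteq E$. Hence $A$ is a finite integral domain, so $A$ itself is a field. Equivalently, $m_g(x)$ is irreducible over $\F_q$: if $m_g=fh$ with coprime or repeated factors, then $\ker f(g)$ would be a proper nonzero $g$-invariant subspace.

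Next, $V$ is a vector space over the field $A\cong \F_q[x]/(m_g)$. Any $A$-subspace is $g$-invariant, so irreducibility forces $\dim_A V=1$. Therefore $|A|=q^m$ and $\deg m_g=m$. In particular $A\cong\F_{q^m}$ and $g$ is an element of $\F_{q^m}^*$ of multiplicative order $n$.

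It then remains to show that $\ord_n(q)=m$. Since $g\in\F_{q^m}^*$, we have $n\mid q^m-1$, so $d:=\ord_n(q)$ divides $m$. Conversely, $g^{q^d}=g$ because $n\mid q^d-1$, so $g$ lies in the subfield $\F_{q^d}$ of $A$. The field generated by $g$ over $\F_q$ is all of $A=\F_q[g]$, so $q^m=|A|\le q^d$, which gives $m\le d$. Hence $d=m$.

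The only real obstacle is the step showing $\F_q[g]$ is a field and that $V$ is one-dimensional over it. Both follow cleanly from Schur's lemma, or directly from the observation that each factor of $m_g$ yields an invariant kernel. The rest is routine finite-field bookkeeping.
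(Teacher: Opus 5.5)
Your proof is correct and follows essentially the same outline as the paper's: show that $m_g$ is irreducible of degree $m$, so that $g$ is an element of order $n$ generating the field $\F_q[g]\cong\F_{q^m}$, and then read off $\ord_n(q)=m$. The difference is in the tools: you get irreducibility and $\deg m_g=m$ from Schur's lemma and $\dim_{\F_q[g]}V=1$, whereas the paper uses the cyclic-vector and primary decomposition theorems, and your version has the small advantage of not needing $x^n-1$ to be separable. The paper's step ``$m_g$ has no multiple zeros'' tacitly uses $\gcd(n,q)=1$, while in your argument this coprimality follows from $n\mid q^m-1$.
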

\begin{proof}
Suppose  that $\langle g \rangle$ acts irreducibly on $V \cong \F_{q^m}$. Then $g$ stabilizes no nontrivial proper subspaces of~$V$, so by \cite{HK71}*{Section 7.1, Theorem 1}, the minimal polynomial $m_g(x)$~of $g$ has degree~$m$. Moreover, since $g$ has order~$n$, we have $m_g(x)\mid (x^n-1)$, so $m_g(x)$ has no multiple zeros. Hence, by the Primary Decomposition Theory \cite{HK71}*{Section 6.8, Theorem 12}, the minimal polynomial of~$g$ is irreducible. So if $\ga$ is a zero of~$m_g(x)$, then $m_g(x)=(x-\ga)(x-\ga^q)\cdots(x-\ga^{q^{m'-1}})$ with $m'=\ord_n(q)$, so $m=m'=\ord_n(q)$.
\end{proof}

\begin{lemma}\label{lem_exponent}
Let $r$ be a prime, and let $H$ be a Sylow $r$-subgroup of $\GL_k(r)$.  Let $e$ be the smallest integer such that  $k\leq r^{e}$. Then the exponent of $H$ is at most $r^e$.
\end{lemma}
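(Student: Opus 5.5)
The plan is to show that every element $h\in H$ satisfies $h^{r^e}=1$. Every element of a Sylow $r$-subgroup of $\GL_k(r)$ has $r$-power order. Over a field of characteristic $r$, such an element is unipotent. Indeed, if $o(h)=r^a$, then $h^{r^a}-I=(h-I)^{r^a}=0$, because the Frobenius identity $(x-1)^{r^a}=x^{r^a}-1$ holds in $\F_r[x]$ and $h$ commutes with $I$.

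So we may write $h=I+N$ with $N$ nilpotent. Since $N$ acts on a $k$-dimensional space, its nilpotency index is at most $k$; that is, $N^k=0$. This follows, for instance, from the strictly increasing chain of kernels $\ker N\subsetneq\ker N^2\subsetneq\cdots$, which must stabilize within $k$ steps, or from the Cayley--Hamilton theorem applied to a matrix whose characteristic polynomial is $x^k$.

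Now compute $h^{r^e}$. Because $I$ and $N$ commute and we are in characteristic $r$, the binomial theorem gives $(I+N)^{r^e}=I+N^{r^e}$, all intermediate binomial coefficients $\binom{r^e}{j}$ with $0<j<r^e$ being divisible by $r$. By the choice of $e$ we have $r^e\ge k$, so $N^{r^e}=0$ and hence $h^{r^e}=I$.

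Therefore every element of $H$ has order dividing $r^e$, so the exponent of $H$ divides, and in particular is at most, $r^e$. Alternatively, one can avoid the unipotence step: by Sylow's theorem, $H$ is conjugate to the group of upper unitriangular matrices, which gives $h=I+N$ with $N$ strictly upper triangular and $N^k=0$ directly.

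There is no real obstacle here. The only point requiring care is justifying $N^k=0$ (or the unitriangular form), and both routes above are standard.
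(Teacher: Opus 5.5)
Your proof is correct and is essentially the paper's argument. The paper conjugates $H$ into the lower unitriangular group, writes each element as $I_k+R$ with $R^k=0$, and uses $(I_k+R)^{r^e}=I_k+R^{r^e}=I_k$, which is exactly your binomial computation and is your stated alternative route. Your main route instead gets unipotence from $(h-I)^{r^a}=h^{r^a}-I=0$, which avoids needing the explicit form of the Sylow subgroup and so applies to every element of $r$-power order in $\GL_k(r)$.
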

\begin{proof}Let $I_k$ denote the $k\times k$ identity matrix of $\GL_{k}(r)$.  Up to conjugation, $H$ is a subgroup of $\GL_k(r)$ consisting of all lower triangular matrices with diagonal entries 1.  For any $M\in H$, we have  $M=I_k+R\in H$, where $R$ is the nilpotent part of $M$ satisfying $R^k=0$.  We deduce from $r^{e-1}<k\leq r^{e}$ that $(M)^{r^e}=(I_k+R)^{r^e}=I_k+R^{r^e}=I_k$ for any $M\in H$, where we used the fact that $(A+B)^r=A^r+B^r$ if  $AB=BA$. This completes the proof.
\end{proof}

We shall need Aschbacher's classification theorem \cite{Aschbacher1984} on the maximal subgroups of finite almost simple classical groups. There are eight classes, $\cC_1,\cC_2,\ldots,\cC_8$, of geometric type which are stabilizers of certain geometric structures. Table \ref{Table_Asch} contains a rough description of these Aschbacher classes, cf.
\cite{BHR407}*{Table 2.1}, and we refer to the monograph \cite{KL1990} by  Kleidman and Liebeck for more detailed information.  The Aschbacher class $\cS$ consists of all the subgroups not covered by the eight classes of geometric type. We refer to \cite{BHR407}*{Definition 2.1.3} for the precise description of the Aschbacher class $\cS$.

\begin{table}[!hbtp]
\centering
\caption{Rough geometric interpretations of Aschbacher classes}
\label{Table_Asch}
\renewcommand{\arraystretch}{1.4} 
\setlength{\tabcolsep}{6pt} 
\begin{tabular}{@{}>{\centering}m{1.5cm}|l@{}}
\toprule
\textbf{Class} & \textbf{Geometric interpretation} \\
\midrule
$\cC_1$ & Stabilizers of totally singular or nonsingular subspaces \\
\hline
$\cC_2$ & Stabilizers of decompositions $V=\bigoplus_{i=1}^t V_i$, $\dim V_i= a$ \\  \hline
$\cC_3$ & Stabilizers of vector space structures over extension fields of prime degree \\  \hline
$\cC_4$ & Stabilizers of tensor product decompositions $V=V_1\otimes V_2$ \\
\hline
$\cC_5$ & Stabilizers of subgeometries over subfields of prime index \\ \hline
   \multirow{2}{*}{$\cC_6$} & Normalizers of $r$-groups of symplectic type  with $V$ as \\
         &  their absolutely irreducible module \\
\hline
$\cC_7$ & Stabilizers of tensor decompositions $V = \bigotimes_{i=1}^k V_i$, $\dim V_i =a$ \\
\hline
$\cC_8$ & Groups of similarities of non-degenerate classical forms \\
\bottomrule
\end{tabular}
\end{table}

\subsection{Standard and non-standard non-degenerate cyclic codes} %

We now introduce some notation that will be used frequently throughout this paper.
\begin{definition}\label{def_UL}
Assume that $\gcd(n,q)=1$, and let $m=\textup{ord}_n(q)$ be the smallest positive integer such that $n$ divides $q^m-1$. Let $\cU_{n,q}$ be the multiplicative subgroup of order $n$ of~$\F_{q^m}^*$, and let $\xi$ be a generator of $\cU_{n,q}$. Let $\GL_m(q)$ be the group of all invertible $\F_q$-linear transformations of $\F_{q^m}$, and let $\cL(n,q)$ be the set-wise stabilizer of $\cU_{n,q}$ in $\GL_m(q)$. Let $\cL_{\textup{st}}(n,q)$ be the subgroup of $\cL(n,q)$ generated by $\sigma$ and $\psi$, where $\sigma(x)=\xi x$ and $\psi(x)=x^q$ for $x\in\F_{q^m}$. Note that $|\cL_{\textup{st}}(n,q)|=nm$. In the remainder of this paper, we always use $n_0$ to denote $\gcd(n,q-1)$, that is, $n_0:=\gcd(n, q-1)$.
\end{definition}

We adopt the notation of Definition \ref{def_UL}. Let $C$ be a $q$-ary cyclic code of length~$n$. We can identify the coordinate positions $0,1,\ldots,n-1$ in~$\bbZ_n$ with $1,\xi,\ldots,\xi^{n-1}$, respectively, so that the two definitions of $\sigma$ coincide. By a slight abuse of notation, we also use $\psi$ to denote the corresponding permutation of coordinate positions $i\mapsto qi$ on~$\bbZ_n$ induced by the action of~$\psi$. Then $\psi$ is also in $\PAut(C)$, cf. \cite[Corollary 5.19]{PHB98}. More generally, in~\cite[Theorem 5.28]{Hollmann2023} it is shown that if~$C$ has check polynomial~$h(x)$, then the map $\mu_t: x\mapsto tx$ in~$\AG(n)$ is in~$\PAut(C)$ if and only if the map $\mu_t: x\mapsto x^t$ with $\gcd(n,t)=1$ permutes the zeros of~$h(x)$; note that $\mu_q=\psi$. In what follows, we let $\PAut_{\rm st}(C):=\PAut(C)\cap \AG(n)$. We will say that a 
cyclic code~$C$ is {\em standard\/} if $\PAut(C)=\PAut_{\rm st}(C)$, that is, if $\PAut(C)$ is a subgroup of~$\AG(n)$.  It is shown in~\cite[Lemma 5.30]{Hollmann2023} that every cyclic code $C$ of length~$n\leq 3$ has $\PAut(C)\subseteq \AG(n)$,  so every cyclic code of length at most $3$ is standard.

An {\em irreducible\/} cyclic code is a code $C=C_{n,h,q}$ whose check polynomial~$h(x)$ is irreducible over $\F_q$, see, e.g., \cite[Definition 6.1.6]{GTMcoding1992vanLint}, \cite[Definition 2.2]{Schmidt2002}. By~\cite[Corollary 5.14]{Hollmann2023},
\begin{equation}\label{eqn_PAutst}
  \PAut_{\textup{st}}(C)=\la \sigma,\psi\ra.
\end{equation}

\subsection{Degenerate  cyclic codes}

For the sake of completeness, we investigate which degenerate cyclic codes only possess permutation automorphisms contained in~$\la \gs,\psi\ra$ or only in~$\AG(n)$. The result is as follows.

\begin{lemma}\label{lem_deg}
Let $C=C_{n,h,q}$ be the $q$-ary cyclic code of length~$n$ with check polynomial $h(x)$. Let $m=\textup{ord}_n(q)$, let $m'=\deg(h)$, and let $n'=\ord(h)$ \big(the least positive integer such that $h(x)\mid (x^{n'}-1)$\big). Then  $n'\mid n$ and ${\rm dim}(C)=m'$. If $n'<n$, then $C$ is periodic and consists of all the  codewords of the form $(\bfc', \bfc', \ldots, \bfc')$ with $\bfc' \in C'=C_{h,q}$, the $q$-ary non-degenerate cyclic code  of length $n'$ with check polynomial $h(x)$. Moreover, {with $k:=n/n'$,}
\[\PAut(C) = S_k\wr\PAut(C'),\]
so that $|\PAut(C)|=(k!)^{n'}|\PAut(C')|$.
Finally, $C$ is non-standard, that is, $\PAut(C)\nsubseteq \AG(n)$, with the exception of the following four cases:
\ben
\item
$n=2$, {$n'=1$,} $h(x)=x-1$, $q$ is odd, and $C$ is the repetition code of length~2; here $\PAut(C)=S_2=\AG(2)=\la \gs,\psi\ra$.
\item
$n=3$, {$n'=1$,} $h(x)=x-1$, $q\equiv 1,2\bmod 3$, and $C$ is the repetition code of length~3; here $\PAut(C)=S_3\cong\AG(3)$. 
\item
$n=4$, {$n'=2$,} $h(x)=x+1$, $q$ is odd, and $C$ is the $\bbF_q$-span of $(1,-1,1,-1)$; here $\PAut(C)=S_2\wr S_2\cong \AG(4)$. 
\item
$n=4$, {$n'=2$,} $h(x)=x^2-1$, $q$ is odd, and $C=\bbF_q^2$; here $\PAut(C)=S_2\wr S_2\cong \AG(4)$. 
\een
\end{lemma}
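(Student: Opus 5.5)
The plan is to prove the structural statements first and then the exceptions. Since $h(x)\mid x^n-1$ and $h(x)\mid x^{n'}-1$, we have $h(x)\mid \gcd(x^n-1,x^{n'}-1)=x^{\gcd(n,n')}-1$. Minimality of $n'$ then gives $n'\mid n$. The dimension equals $\deg h=m'$ by the standard theory of cyclic codes.

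For periodicity, write $x^n-1=(x^{n'}-1)\,s(x)$ with $s(x)=1+x^{n'}+\cdots+x^{(k-1)n'}$. Then $C=\langle g\rangle$ with $g=(x^n-1)/h$, so every codeword has the form $a(x)\,s(x)\,(x^{n'}-1)/h(x)$. Reducing $a(x)(x^{n'}-1)/h(x)$ modulo $x^{n'}-1$ gives a codeword $c'$ of $C'$, and $c'(x)s(x)$ is exactly the concatenation $(\bfc',\ldots,\bfc')$. Comparing dimensions shows that this map is a bijection, so $C=\{(\bfc',\ldots,\bfc') : \bfc'\in C'\}$.

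For the automorphism group, partition $\bbZ_n$ into the $n'$ residue classes modulo $n'$, each of size $k$. In every codeword, coordinates in the same class carry equal entries. Permuting freely inside each class therefore preserves $C$, and a permutation of the classes preserves $C$ exactly when it lies in $\PAut(C')$. This gives $S_k\wr\PAut(C')\le\PAut(C)$.

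The reverse inclusion is the main obstacle. The cleanest route is to characterise the classes intrinsically: $i\sim j$ if and only if $c_i=c_j$ for all $c\in C$. Any automorphism preserves this relation, so it suffices that the relation is exactly congruence modulo $n'$. Suppose instead that two positions $i\not\equiv j \pmod{n'}$ in $C'$ always carry equal entries. Then the vector $e_i-e_j$ lies in $C'^\perp$. Since $C'^\perp$ is cyclic, every shift of $e_i-e_j$ also lies in it, so all shifts of $e_{j-i}-e_0$ lie in $C'^\perp$. Hence every codeword of $C'$ is invariant under shifting by $d=\gcd(j-i,n')<n'$, so $C'$ is periodic with period $d$. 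Periodicity with period $d$ forces $h\mid x^d-1$, contradicting $\ord(h)=n'$. The argument needs care when $C'$ is the zero code, but then $n'=1$ and the claim is trivial. The order formula $|\PAut(C)|=(k!)^{n'}|\PAut(C')|$ follows immediately.

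For the standardness claim, note that $|\AG(n)|=n\varphi(n)$, while $\PAut(C)$ has order at least $(k!)^{n'}n'$ with $k\ge2$. Moreover $\PAut(C)$ contains a transposition swapping two positions in the same class, and $\AG(n)$ contains a transposition only for small $n$. Indeed, a map $x\mapsto tx+a$ fixing $n-2$ points must be the identity once $n\ge4$, except for the case $n=4$ with $x\mapsto -x+a$ or similar. If a transposition fixes $\ge 2$ points, then $(t-1)x_1=-a=(t-1)x_2$, so $(t-1)(x_1-x_2)=0$ for all pairs of fixed points. Carried through, this leaves only $n\le4$ as candidates. A direct check of $n=2,3,4$ with $k\ge2$ then completes the proof:
\begin{itemize}
\item $n=2$: here $n'=1$ and $h=x-1$. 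The only case is the repetition code, with $\PAut(C)=S_2=\AG(2)$; we need $q$ odd because $\gcd(n,q)=1$.
\item $n=3$: here $n'=1$, and $S_3=\AG(3)$.
\item $n=4$: if $n'=1$, then $\PAut(C)=S_4\not\subseteq\AG(4)$. If $n'=2$, then $h\in\{x+1,x^2-1\}$, and $\PAut(C)=S_2\wr S_2$ has order $8=|\AG(4)|$, with the two groups coinciding (the dihedral group stabilising $\{\{0,2\},\{1,3\}\}$).
\end{itemize}
This yields exactly the four listed exceptions.
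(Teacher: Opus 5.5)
Your proof is correct, and it differs from the paper's in two respects.

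First, the paper does not prove the structural part. It cites periodicity and $\PAut(C)=S_k^{n'}\rtimes\PAut(C')$ from \cite{Hollmann2023} (Lemma~5.22 and Corollary~5.24). You prove both directly. Your reverse inclusion is sound and self-contained: the relation $i\sim j\iff c_i=c_j$ for all $c\in C$ is $\PAut(C)$-invariant, and it coincides with congruence modulo $n'$, since otherwise some $e_i-e_j$ lies in the cyclic code $C'^{\perp}$ and forces $h\mid x^d-1$ with $d$ a proper divisor of $n'$.

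Second, the paper proves non-standardness by comparing orders, $(k!)^{n'}n'\le|\PAut(C)|$ against $|\AG(n)|<n^2$. After a routine inequality check this leaves $(n',k)\in\{(1,2),(1,3),(2,2),(3,2)\}$, and $(3,2)$ is removed by a finer count. You instead exhibit a single witness: a transposition of two positions in the same residue class lies in $\PAut(C)$, while $\AG(n)$ has no transpositions for $n\ge5$. You should make your ``carried through'' step explicit. The fixed points of $x\mapsto tx+a$, if there are any, form a coset of the subgroup $\{y:(t-1)y=0\}$ of $\bbZ_n$, so their number divides $n$; but $n-2\nmid n$ for $n\ge5$. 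Your route needs only the easy inclusion (indeed one transposition) for the non-standardness direction, and it reduces immediately to $n\le4$.

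Your treatment of $n=4$, $n'=2$ is also more careful than the paper's. Containment in $\AG(4)$ requires the two groups to be equal inside $S_4$, and you verify this: both are the stabilizer of $\{\{0,2\},\{1,3\}\}$. The paper only records the isomorphism $S_2\wr S_2\cong\AG(4)$.

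Like the paper, you tacitly take $h\neq1$. The zero code of length $2$ or $3$ would be a further exception, with $\PAut(C)=S_n=\AG(n)$.
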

\begin{proof}

Since $h(x)\mid (x^n-1)$, we have $n'\mid n$ and $\dim C=\deg h=m'$. If $n'<n${, so $k=n/n'\ge 2$,} then by \cite[Lemma~5.22 and Corollary~5.24]{Hollmann2023},
\[
C=\{(\mathbf{c}',\ldots,\mathbf{c}'):\ \mathbf{c}'\in C'\},\qquad
\PAut(C)=S_k^{n'}\rtimes \PAut(C'),\qquad C'=C_{h,q}.
\]
Thus $|\PAut(C)|=(k!)^{n'}|\PAut(C')|\ge (k!)^{n'}\cdot n'\cdot \ord_{n'}(q)\geq(k!)^{n'} n' $.

On the other hand, $|\AG(n)|=n\varphi(n)=kn'\varphi(kn')<(kn')^2$. Now since $k\geq 2$, we have $(k!)^{n'-2}\geq n'$ for $n'\geq 4$. Using this, a routine verification shows that
the inequality $(k!)^{n'} n' \leq (kn')^2$ cannot occur when $n'\geq 4$, and can only occur when $k\leq 3$  if $n'=1$ and $k\leq 2$ if $n'\in \{2,3\}$.   In particular, this leaves the possibilities $(n',k)\in\{(1,2),(1,3),(2,2),(3,2)\}$.
However, the case $(n',k)=(3,2)$ is impossible, since then $n=6$ and
$|\PAut(C)|\ge (2!)^3\cdot 3=24>|\AG(6)|=12$.
Hence $|\PAut(C)|\le |\AG(n)|$ can occur only for
$(n',k)\in\{(1,2),(1,3),(2,2)\}$.  We handle these three exceptional cases one-by-one. The condition on $q$ is obtained from the assumption $\gcd(n,q)=1$.
If $(n',k)=(1,2)$, then $n=2$, $h(x)=x-1$.
If $(n',k)=(1,3)$, then $n=3$, $h(x)=x-1$, yielding (2).
If $(n',k)=(2,2)$, then $n=4$, $h(x)\in\{x+1,x^2-1\}$, yielding (3) and (4).  This completes the proof.

\end{proof}

As a consequence of~\Le{lem_deg}, degenerate cyclic codes of length~$n$ are periodic and possess permutation automorphisms outside~$\AG(n)$, with only a few exceptions. By the above theorem, the automorphism group of such a code is completely determined by the smaller code that it extends periodically. 

{\bf So in the remainder of this paper, we assume that the $q$-ary irreducible cyclic codes of length~$n$ under discussion are {\em non-degenerate\/}, so with a check polynomial of order~$n$, and have dimension $m=\textup{ord}_n(q)$.} For convenience, we use the abbreviations {\bf SIC codes} (resp., {\bf NSIC codes}) for standard (resp., non-standard) non-degenerate irreducible cyclic codes, respectively.

\subsection{Non-degenerate irreducible cyclic codes}

In the rest of this paper, we will mostly consider non-degenerate $q$-ary irreducible cyclic codes~$C$ of length~$n$. If $C$ is such a code, then by~\cite[Theorem 5.15]{Hollmann2023}, $\PAut(C)$ is isomorphic to $\cL(n,q)$ and $\PAut_{\textup{st}}(C)$ is isomorphic to $\cL_{\textup{st}}(n,q)$ under our identification of coordinate positions with the elements of $\cU_{n,q}$.  In particular, each permutation in $\PAut(C)$ corresponds to an $\F_q$-linear transformation of $\F_{q^m}$ that stabilizes $\cU_{n,q}$. If $\cL_{\textup{st}}(n,q)\varsubsetneqq \cL(n,q)$, then $C$ is a {\it non-standard} irreducible cyclic code, and $C$ is {\em standard\/} otherwise. Note that by the above, this definition coincides with the definition given earlier for general irreducible cyclic codes.

Irreducible cyclic codes can be represented as trace codes. We give the detailed explanations below. Let $C$ be an irreducible cyclic code of length $n$ over $\F_q$, and let $m=\ord_n(q)$. Let $h(x)$ be the check polynomial of $C$, and let $m':=\deg (h)$ and $n'=\ord(h)$. Then $n'\mid n$, and since $h(x)$ is irreducible over~$\bbF_q$, we have $m'=\ord_{n'}(q)={\rm dim}_{\F_q}(C)$, and hence $m'\mid m$. {Let $\beta\in \F_{q^{m'}}$ be such that $\beta^{-1}$ is a root of $h(x)$.}
Note that $\gb$ has order~$n'$. Then by~\cite[Theorem 5.25]{PHB98} or \cite[Section 7.2]{SRG}, the code $C$ can be represented as a trace code:
\begin{equation}\label{LEtrace}  \bfc_{\alpha}:=\big(\Tr(\alpha),\Tr(\alpha\beta),\ldots,\Tr(\alpha\beta^{n-1})\big)\mid \alpha  \in\F_{q^{m'}},
\end{equation}
where $\Tr=\Tr_{\F_{q^{m}}/\F_q}$ is the trace function from $\F_{q^{m}}$ to $\F_q$.
The next lemma should be compared to~\cite[Corollary 5.24]{Hollmann2023}.
\ble{LLtrace}With the above notation, suppose that $n'<n$. Then the codewords of $C=C_{n,h,q}$ are periodic; more precisely, for every $\ga\in\bbF_{q^{m'}}$, we have $c_{\alpha}=({c_{\alpha}',c_{\alpha}' ,\ldots,c_{\alpha}'})$ (repeated $n/n'$ times),
where $c_{\alpha}'$ is a codeword of the $q$-ary irreducible cyclic code $C'=C_{h,q}$ of length~$n'$ and dimension $m'$ with check polynomial $h(x)$.
\ele
\bpf
By the above arguments, the $q$-ary code $C'$ of length~$n'$ and check polynomial $h(x)$ has codewords $c'_\ga=(\Tr(\alpha),\Tr(\alpha\beta),\ldots,\Tr(\alpha\beta^{n'-1}))$ for $\ga\in\bbF_{q^{m'}}$. Since $\gb$ has order~$n'$, the claim immediately follows from (\ref{LEtrace}).
\epf


\section{Examples of non-standard non-degenerate irreducible cyclic codes}\label{sec_examples}
We list some known examples of non-standard non-degenerate irreducible cyclic codes (NSIC codes) below. In Examples \ref{exa_repetition}-\ref{exa_Golay}, $\PAut(C)$ is nonsolvable, and it is worth noting that the extension ovoid codes in Example \ref{exa_C8} have not been identified non-standard before.
\begin{example}[Duals of repetition codes]\label{exa_repetition}\cite[Theorem 5.2]{Brison2010}, \cite[Example 3]{Hollmann2023}
Let $n$ be a prime such that $n\ge 5$, and let $p$ be a prime such that $p$ has order $m=n-1$ modulo $n$, i.e., $\ord_n(p)=m$. Let $C$ be the dual code of a repetition code of length $n$ over $\F_p$. Then $C$ is an NSIC code with $\PAut(C)=S_n$.
\end{example}

\begin{example}[Duals of primitive BCH codes of designed distance 2]\label{exa_whole}\cite[Theorem 4.3]{Brison2010}, \cite[Example 1]{Hollmann2023}
Let $n=q^m-1$, where $m\geq 2$ and $(m,q)\ne (2,2)$. Then $\cU_{n,q}=\F_{q^m}^*$, and $\cL(n,q)=\GL_m(q)$.  The corresponding irreducible cyclic code $C$ of length $n$ over $\F_q$ is non-standard, and it is permutation equivalent to the dual of a primitive narrow-sense BCH code of designed distance $2$, cf. \cite[Chapter 9.1]{Mac1977ErrorCorectcode}.
\end{example}

\begin{example}[Extension ovoid codes]\label{exa_C8}
Let $n=(q-1)(q^2+1)$, so that $m=4$. Let $\gamma$ be a nonzero element of $\F_{q^2}$ such that $\gamma+\gamma^q=0$, and define $Q(x)=\tr_{\F_{q^2}/\F_q}(\gamma x^{q^2+1})$ for $x\in\F_{q^4}$. Then $\cU_{n,q}=\{x\in\F_{q^4}^*:\,Q(x)=0\}$, and $\cL(n,q)$ contains $\GO_{4}^-(q)$. The corresponding irreducible cyclic code $C$ of length $n$ over $\F_q$ is non-standard.
\end{example}

\begin{example}[Duals of Golay codes]\label{exa_Golay}\cite[Example 4, Example 5]{Hollmann2023}
Let $C$ be the dual of the binary Golay code of length $n=23$ or the dual of the ternary Golay code of length $n=11$. The automorphism group of $C$ is the Mathieu group $M_{23}$ or $2.M_{11}$, and $\PAut(C)$ is $M_{23}$ or $\PSL_2(11)$ respectively. In both cases, $\PAut(C)$ contains a full cycle of length $n$ and $C$ is an NSIC code.
\end{example}
We have the following new family of NSIC codes \cite[Section VI]{Hollmann2022ISIT}, \cite[Section 6]{HollmannII}, some special cases of which have already appeared in \cites{Hollmann2023} or \cite[Theorem 6.5]{Brison2010}.

\begin{example}[Equally-spaced check polynomials]\label{exa_imp}
Let $(n,n',k)$ be a triple with $n\geq 6$ that satisfies the following conditions:
\begin{equation}\label{eqn_nn0rCond}
  n=n'k,\quad k\ge 2,\quad \ord_n(q)=k\cdot\ord_{n'}(q).
\end{equation}
Then the irreducible cyclic code $C$ of length $n$ over $\F_q$ is non-standard, and $\PAut(C)$ contains a subgroup isomorphic to $(C_{n'} \rtimes C_{m'})^k\rtimes S_k$, where $m'=\ord_{n'}(q)$.
\end{example}
\begin{remark}
We shall give a proof that the code in Example \ref{exa_imp} is non-standard in Section \ref{sec_newNSIC}. In Proposition \ref{prop_nn0rChar}, we shall also give an arithmetic characterization of the triples $(n,n',k)$ that satisfy the conditions in \eqref{eqn_nn0rCond}.
\end{remark}

We next present some secondary constructions of NSIC codes from known ones. The first two constructions are due to the second-named author, cf. \cites{Hollmann2022ISIT,Hollmann2023,HollmannII}, where the first construction is based on \cite[Theorem 3.1]{Brison2010} (called ``Lifting'' there). We employ the notation that we have introduced in Definition \ref{def_UL}, and identify a nonzero element of $\F_q$ with an element of $\GL_m(q)$ via left multiplication.

\begin{construction}[Extension method]\label{exa_ext}
Let $C$ be an NSIC code of length $n$ over $\F_q$,  let $m=\textup{ord}_n(q)$ and $u$ be a divisor of 
{$(q-1)/n_0$ (recall that $n_0=\gcd(n,q-1)$)}. The irreducible cyclic code $C'$ of length $n'=nu$ over $\F_q$ is non-standard.
\end{construction}
\begin{proof}
It is routine to check that $\ord_{n'}(q)=m$, 
and that $\cU_{n',q}$ is generated by $\cU_{n,q}$ and a subgroup $H$ of order $u\gcd(q-1,n)$ in $\F_q^*$. We also have
$\cL_{\textup{st}}(n',q)=\cL_{\textup{st}}(n,q)H$, and $\cL(n',q)$ contains $\la\cL(n,q),H\ra$. The subgroup $\cL(n,q)\cap H$ has order $\gcd(n,q-1)$ and is contained in $\cL_{\textup{st}}(n,q)$, so $\cL_{\textup{st}}(n',q)\cap \cL(n,q)=\cL_{\textup{st}}(n,q)$ by Dedekind's law. It follows that $\cL(n',q)\ne\cL_{\textup{st}}(n',q)$, so $C'$ is non-standard.
{For more details, see \cite[Section 5]{HollmannII}.}
\end{proof}

\begin{construction}[Lifting method]\label{exa_lift}
Let $C$ be an NSIC code of length $n$ over $\F_q$, and let $m=\textup{ord}_n(q)$. Take $t\in\mathbb{N}$ such that $\gcd(m,t)=1$. The code $C'= C\otimes\F_{q^t}$, which is the $\F_{q^t}$-span of $C$, is an NSIC code of length $n$ over $\F_{q^t}$.
\end{construction}
\begin{proof}
It is routine to check that $C'$ is a cyclic code with the same generator polynomial $f(x)$ as $C$, and $f(x)$ is irreducible over $\F_{q^t}$ because $\gcd(m,t)=1$. We have  $\ord_n(q^t)=m$, $\cU_{n,q}=\cU_{n,q^t}$, $\cL_{\textup{st}}(n,q^t)\cap\GL_m(q)=\cL_{\textup{st}}(n,q)$, and $\cL(n,q^t)\cap\GL_m(q)$ contains $\cL(n,q)$ which is strictly larger than $\cL_{\textup{st}}(n,q)$. Hence $C'$ is non-standard.
\end{proof}

\begin{construction}[Descending method, special case]\label{exa_expansion}
Let $n\ge 4$, and let $r$ be a divisor of $m=\ord_n(q)$. Let $C_0$ be an NSIC code of length $n$ over $\F_{q^r}$ with check polynomial $h_0(x)$, of degree $m_0=\ord_n(q^r)$. Let $\gb^{-1}$ be a zero of $h_0(x)$, and let $h(x)$ denote the minimal polynomial of~$\gb^{-1}$ over~$\bbF_q$. Let $C$ be the irreducible cyclic code over~$\bbF_q$ with check polynomial $h(x)$. Then $C=\{ \left(\tr_{\F_{q^r}/\F_q}(c_1),\ldots,\tr_{\F_{q^r}/\F_q}(c_n) \right)\mid(c_1,\ldots,c_n)\in C_0 \}$, and $C$ is non-standard of dimension $m$.
\end{construction}
\begin{proof}
{Note that $m_0=m/r$.}
The expression for the codewords of $C$ follows directly from the trace representations as in~(\ref{LEtrace}) for $C$ and~$C_0$.
Since $\GL_m(q)$ and $\GL_{m_0}(q^r)$ are the $\F_q$-linear and $\F_{q^r}$-linear transformations of $\F_{q^m}$ respectively, the former group contains the latter. It follows that $\cL(n,q)\cap \GL_{m_0}(q^r)=\cL(n,q^r)$. We also have $\cL_{\textup{st}}(n,q)\cap\GL_{m_0}(q^r)=\cL_{\textup{st}}(n,q^r)$. Since $\cL(n,q^r)>\cL_{\textup{st}}(n,q^r)$ by the fact $C_0$ is non-standard, we deduce that $\cL(n,q)\varsupsetneqq\cL_{\textup{st}}(n,q)$, so $C$ is non-standard. As an irreducible cyclic code over $\F_q$ of length $n$, $C$ has dimension $m=\ord_n(q)$ over $\F_q$.
\end{proof}

\begin{construction}[Tensor-product method]\label{exa_product}
Let $C$ be an NSIC code of length $n$ over $\F_q$, and let $m=\textup{ord}_n(q)$. Take an integer $s>1$ such that $t=\ord_s(q)>1$ and $\gcd(m,t)=1$, and set $n'=\lcm(n,s)$. The irreducible cyclic code $C'$ of length $n'$ over $\F_{q}$ is
non-standard.
\end{construction}
\begin{proof}
We will write $\cU_a$ to denote the multiplicative subgroup $\cU_{a,q}$; we will repeatedly use the fact that $\cU_a\cap \cU_b=\cU_{\gcd(a,b)}$ and $\cU_a\cU_b=\cU_{\lcm(a,b)}$

Let $\xi\in\bbF_{q^m}$ have order~$n$ and degree~$m=\ord_n(q)$ over~$\bbF_q$, where $m$ is the degree of the minimal polynomial of $\xi$ over $\bbF_q$. Since $\gcd(m,t)=1$, we also have $m=\ord_n(q^t)$, hence the minimal polynomials of $\xi$ over~$\bbF_q$ and over~$\bbF_{q^t}$ are identical. So the basis $\{e_0:=1,e_1:=\xi, \ldots, e_{m-1}:=\xi^{m-1}\}$ for $\bbF_{q^m}$ over~$\bbF_q$ is also a basis for $\bbF_{q^{mt}}$ over~$\bbF_{q^t}$. As a consequence, every $\bbF_q$-linear map $L$ on~$\bbF_{q^m}$ extends uniquely to an $\bbF_q$-linear map on~$\bbF_{q^{mt}}$, where if $x=\sum_{i=0}^{m-1} x_ie_i\in \bbF_{q^{mt}}$ with $x_i\in \bbF_{q^t}$ for all $i$, then $L(x)=\sum_{i=0}^{m-1} x_iL(e_i)$. Let $\cL^*(n,q)$ and $\cLst^*(n,q)$ denote the groups of extensions as above corresponding to $\cL(n,q)$ and $\cLst(n,q)$, respectively.

Since $\cU_{n'}=\cU_n\cU_s$ and $\cU_s\leq \bbF_{q^t}$, we see that $\cL(n',q)$ contains $\cL'(n,q)$ and $\cU_s$, hence contains $\cL'(n,q)\cU_s$. Next, a map $L\in \cLst(n',q)$ has the form $L: x\mapsto \mu x^{q^i}$ for some $\mu\in \cU_{n'}$ and $i\in\{0,1,\ldots, mt-1\}$; again since $\cU_{n'}=\cU_n\cU_s$, we have $\mu=\gl \ga$ with $\gl\in\cU_n$ and $\ga\in \cU_s$, and we see that the restriction of the map $\ga^{-1}L$ to $\bbF_{q^m}$ is contained in $\cLst(n,q)$. So we conclude that $\cLst(n',q)=\cLst(n,q)\cU_s$. By comparing orders, we see that $\cL_{\textup{st}}(n',q)$ is a proper subgroup of $\cL^*(n,q)\cdot\cU_s$. We conclude that $\cLst(n',q)<\cL(n',q)$ and hence $C'$ is non-standard as claimed.
\end{proof}

\begin{remark}
    It is worth noting that Example $\ref{exa_imp}$ and Constructions \ref{exa_ext}-\ref{exa_product} have a deep relation with the geometric classes of Aschbacher's classification theorem. If $\cL(n,q)$ is contained in one of the precise geometric classes $\cC_1$-$\cC_7$, the structure of $\cL(n,q)$ can be determined roughly. We establish these results in the proof of Propositions \ref{prop_C2}-\ref{prop_C7}.
\end{remark}

\begin{remark}Construction~\ref{exa_expansion} is a special case of the descending method of \cite[Proposition 3.4]{Brison2010}, see also \cite[Theorem 5.3]{HollmannII}. In fact, the main conclusion holds without any condition on~$r$. Moreover, it can be shown that Construction~\ref{exa_product}
 can be obtained from Constructions \ref{exa_ext} and \ref{exa_lift} in combination with  the descending method.
\end{remark}

\section{A new family of non-standard non-degenerate irreducible cyclic codes}\label{sec_newNSIC}
In this section, we present the proof for the fact that the irreducible cyclic codes in Example \ref{exa_imp} are non-standard. This is achieved by establishing the following slightly stronger result.
\begin{lemma}\label{lem_C2para}
Suppose that $n=n'k$ with $k\ge 2$ and $n'\geq2$, and set  $m=\ord_{n}(q)$, $m'=\ord_{n'}(q)$. Further, assume that $m'k=m$. Then $\cL_{\textup{st}}(n',q)\wr S_k\le \cL(n,q)$, and the irreducible cyclic code of length $n$ over $\F_q$ is non-degenerate and non-standard if and only if $n \geq 6$.
\end{lemma}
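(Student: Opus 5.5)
The plan is to realise $\F_{q^m}$ as a $k$-dimensional vector space over the subfield $\F_{q^{m'}}$, and to show that $\cU_{n,q}$ is a union of $k$ "coordinate copies" of $\cU_{n',q}$, one in each summand of a direct sum decomposition. Non-degeneracy is automatic: the check polynomial is the minimal polynomial of a generator $\xi$ of $\cU_{n,q}$, which has order $n$. So the content lies in the wreath-product embedding and the order comparison.

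\emph{Step 1: the decomposition.} Let $\xi$ generate $\cU_{n,q}\le \F_{q^m}^*$ and set $\zeta=\xi^k$, a generator of $\cU_{n',q}\le\F_{q^{m'}}^*$. Since $\ord_n(q)=m$, $\xi$ generates $\F_{q^m}$ over $\F_q$, hence also over $\F_{q^{m'}}$. Its degree over $\F_{q^{m'}}$ is therefore $[\F_{q^m}:\F_{q^{m'}}]=m/m'=k$. Consequently $1,\xi,\ldots,\xi^{k-1}$ is an $\F_{q^{m'}}$-basis, and
\[
\F_{q^m}=V_0\oplus\cdots\oplus V_{k-1},\qquad V_i:=\xi^i\F_{q^{m'}}.
\]
On the other hand, $\cU_{n,q}=\bigcup_{i=0}^{k-1}\xi^i\cU_{n',q}$, and $\xi^i\cU_{n',q}\subseteq V_i$. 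Since the $V_i$ meet only in $0$, we get $\cU_{n,q}\cap V_i=\xi^i\cU_{n',q}$.

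\emph{Step 2: the embedding.} Identify $V_i$ with $\F_{q^{m'}}$ via $x\mapsto \xi^i x$. Each element of $\cL_{\textup{st}}(n',q)$, namely $x\mapsto \zeta^j x^{q^l}$, is $\F_q$-linear on $\F_{q^{m'}}$ and stabilises $\cU_{n',q}$. Letting an arbitrary $k$-tuple of such maps act coordinatewise, and letting $S_k$ permute the summands through these identifications, gives $\F_q$-linear maps of $\F_{q^m}$. By Step 1 these maps stabilise $\cU_{n,q}$. Hence $\cL_{\textup{st}}(n',q)\wr S_k\le\cL(n,q)$, of order $(n'm')^k k!$.

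\emph{Step 3: when is it non-standard.} We have $|\cL_{\textup{st}}(n,q)|=nm=n'm'k^2$, so the code is non-standard as soon as $(n'm')^{k-1}(k-1)!>k$. If $n'\ge3$, or $k\ge3$, this holds: for $n'\ge3,k=2$ we get $n'm'\ge3>2$, and for $k\ge3$ we get $2^{k-1}(k-1)!>k$. The only remaining case with $n',k\ge2$ is $n'=k=2$, i.e. $n=4$. Since $n=5$ is prime and admits no such factorisation, $n\ge6$ is exactly the complement of this case, which gives the "if" direction.

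The main obstacle is the converse for $n=4$, where the bound above is an equality and so gives no information. There $m'=1$, $m=2$, $q\equiv3\pmod 4$ and $\cU_{4,q}=\{\pm1,\pm i\}$. I would count directly: an $\F_q$-linear map stabilising $\cU_{4,q}$ is determined by the images $a,b$ of $1$ and $i$. These must lie in $\cU_{4,q}$ with $b\ne\pm a$, giving at most $8=nm$ maps. Hence $\cL(4,q)=\cL_{\textup{st}}(4,q)$ and the code is standard. (Alternatively one can appeal to the known standardness for small lengths.)
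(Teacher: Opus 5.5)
Your proposal is correct and follows the paper's proof: you use the $\F_{q^{m'}}$-basis $1,\xi,\ldots,\xi^{k-1}$, embed the wreath product coordinatewise into $\cL(n,q)$, and compare orders, which reduces to $(n'm')^{k-1}(k-1)!>k$ and fails only for $n'=k=2$. For $n=4$ you bound $|\cL(4,q)|\le 8$ directly by counting admissible images of the basis $\{1,i\}$. This is cleaner than the paper's route through the code itself. The paper calls that code the span of $(1,-1,1,-1)$, but the code with check polynomial $x^2+1$ is $\{(a,b,-a,-b)\}$; the conclusion $|\PAut(C)|=8$ is unaffected. Your parenthetical alternative, however, is not available: the small-length standardness result cited in the paper covers only $n\le 3$.
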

\begin{proof}
Take notation as in Definition \ref{def_UL}. Since $\ord_{n'k}(q)=m=m'k$, there exists $\xi\in \F_{q^{m'k}}$ of order~$n'k$ such that $\F_{q^m}=\bbF_q(\xi)$. Then $\xi^k$ has order $n'$ and $\F_{q^{m'}}=\F_q(\xi^k)$. Since $\langle \xi\rangle=\cup_{i=0}^{k-1}\langle \xi^k\rangle\xi^{i}$ and $m=km'$, we conclude that $\{1,\xi, \ldots, \xi^{k-1}\}$ is a basis for $\F_{q^{m}}$ over~$\F_{q^{m'}}$.

We claim that $\cL(n,q)$ contains $\cL_{\textup{st}}(n',q)\wr S_k$. For $\pi\in S_k$ and $L_0, \ldots, L_{k-1}\in \cL(n',q)$, define the map $L=L_{\pi, L_0, \ldots, L_{k-1}}$ by setting $L(\xi^i a):=\xi^{\pi(i)}L_i(a)$ for $i=0,\ldots, k-1$ and $a\in \F_{q^{m'}}$, and extend $L$ by~$\F_q$-linearity to a map on~$\F_{q^{m}}$. By the above, $L$ is well-defined and $\F_q$-linear, and moreover $L$ fixes $\langle\xi\rangle$, so that $L\in\cL(n,q)$. This proves the claim.

By the previous claim, $\cL(n,q)$ has size at least $k! (n'm')^k$. It is larger than $|\cL_{\textup{st}}(n'k,q)|$ if and only if $(n'm')^{k-1}(k-1)!>k$, that is, if $k\geq 3$ or if $k=2$ and $n'\geq 3$, so when $n\geq 6$. In the remaining case, we have $k=2$, $n'= 2$ and $m'=1$, so $q$ is odd, $n=4$, and $m=2$. From the factorization $x^4-1=(x^2+1)(x+1)(x-1)$, we conclude that $\xi$ has minimal polynomial $x^2+1$ over~$\F_q$, and then the generator polynomial is $x^2-1$ over $\F_q$. So the corresponding irreducible cyclic code $C$ is the span of $(1,-1,1,-1)$ over~$\bbF_q$. It is now easily checked that $|\PAut(C)|=nm=8$, so in this case, $C$ is standard.
\end{proof}

We next determine the triples $(n,n',k)$ that satisfy the conditions in \eqref{eqn_nn0rCond}, i.e.,
\begin{equation*}
  n=n'k,\; k\ge 2,\;\ord_n(q)=k \cdot\ord_{n'}(q).
\end{equation*}
%
%
We need some preparation. For an integer $n\ge 1$ and a prime $p$, we use $\pi(n)$ to denote the set of prime divisors of $n$, and we use $\nu_p(n)$ to denote the largest non-negative integer~$i$ such that $p^i$ divides $n$.  We need a well-known technical result, which is actually a direct consequence of a result often referred to as ``Lifting the Exponent'' Lemma\footnote{ \url{https://en.wikipedia.org/wiki/Lifting-the-exponent_lemma}.}. For the sake of completeness, we include a proof here.
\begin{lemma}\label{lem_basic}
Let $r$ be a prime, and let $N\neq 1$ be an integer with $f:=\nu_r(N-1)\geq 1$.
\bit
\item[(1)] If~$e$ is a positive integer with $r\not\,\mid e$, then $\nu_r(N^e-1)=f$.
\item[(2)] We have
$\nu_r(N^r-1)=f+1$ if $(r,f)\neq (2,1)$ and $\nu_r(N^r-1)\ge f+2$ if $(r,f)=(2,1)$.
\eit
\end{lemma}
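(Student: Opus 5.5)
The plan is to write $N=1+r^f u$ with $r\nmid u$ and expand powers of $N$ with the binomial theorem, reading off the $r$-adic valuation of $N^e-1$ from the lowest-order term. Both parts reduce to one factorization: for any positive integer $e$,
\[
N^e-1=(N-1)\,S_e,\qquad S_e:=1+N+\cdots+N^{e-1},
\]
so $\nu_r(N^e-1)=f+\nu_r(S_e)$. It therefore suffices to compute $\nu_r(S_e)$.

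For part (1), I would use that $N\equiv 1\pmod r$ because $f\ge 1$. Then $S_e\equiv e\pmod r$, and since $r\nmid e$, we get $\nu_r(S_e)=0$. This gives $\nu_r(N^e-1)=f$ immediately.

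For part (2), take $e=r$. Then $S_r\equiv r\equiv 0\pmod r$, so $\nu_r(S_r)\ge 1$, and the work is to decide when it equals exactly $1$. I would compute $S_r$ modulo $r^2$. Since $N^i=(1+r^fu)^i\equiv 1+i\,r^fu\pmod{r^{2f}}$ and $2f\ge 2$, we have $N^i\equiv 1+i r^f u\pmod{r^2}$. Summing over $i=0,\dots,r-1$ gives
\[
S_r\equiv r+r^f u\cdot \frac{r(r-1)}{2}\pmod{r^2}.
\]

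The main obstacle is this step, where the case $r=2$ separates from $r$ odd.

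1. If $r$ is odd, then $(r-1)/2$ is an integer, so the second term is divisible by $r^{f+1}$, hence by $r^2$. So $S_r\equiv r\not\equiv 0\pmod{r^2}$, and $\nu_r(N^r-1)=f+1$.
2. If $r=2$, then $S_2=1+N=2+2^f u$. For $f\ge 2$ this has $2$-adic valuation exactly $1$, giving $f+1$.
3. If $r=2$ and $f=1$, then $N\equiv 3\pmod 4$, so $4\mid N+1$. Hence $\nu_2(N^2-1)\ge 1+2=f+2$, as claimed.

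This completes both parts.
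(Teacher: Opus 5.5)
Your proof is correct and rests on the same idea as the paper's: write $N=1+br^f$ with $r\nmid b$ and expand binomially. The difference is only in the bookkeeping. You factor $N^e-1=(N-1)(1+N+\cdots+N^{e-1})$ and then work only modulo $r$ and $r^2$. The paper instead expands $N^e$ and $N^r$ directly modulo $r^{f+1}$ and $r^{f+2}$, using $r\mid\binom{r}{i}$ for $0<i<r$, and settles the case $(r,f)=(2,1)$ via $N^2-1=4b(1+b)\equiv 0\pmod 8$.
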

\begin{proof}
By assumption, there is an integer $b$ relatively prime to $r$ such that $N=1+br^f$. If $r\nmid e$, then $N^e\equiv 1+eb r^f \not\equiv 1\bmod r^{f+1}$, and the first claim follows. Next, since $r$ divides $ \binom{r}{i}$ for $i=1,\ldots, r-1$, we have $N^r\equiv 1+br^{f+1}\bmod r^{f+2}$ unless $r=2$ and $f=1$. In the latter case, we have $N=1+2b$ with $b$ odd, hence $N^2-1=4b+4b^2\equiv 0\bmod 8$.
\end{proof}
As a consequence, we have the following.
\begin{lemma}\label{lem_ordqn}
Let $n'\ge 2$ and $q\ge 2$ be integers such that $\gcd(n',q)=1$. Define $m':=\ord_{n'}(q)$, and set $\gD:=(q^{m'}-1)/n'$.  Let $r$ be a prime such that $\gcd(r,q)=1$.
\begin{itemize}
  \item[(1)] If $r\not\in\pi(n')$, then $\ord_{rn'}(q)$ divides $(r-1)m'$.
  \item[(2)] If $r\in\pi(n')$, then $\ord_{rn'}(q)$ is equal to $rm'$ if
  $\nu_r(\gD)=0$ and equal to $m'$ if  $\nu_r(\gD)>0$.
\item[(3)] For every positive integer $k$ with $\gcd(k,q)=1$, we have $\ord_{kn'}(q)\leq k\cdot \ord_{n'}(q)$.
\end{itemize}
\end{lemma}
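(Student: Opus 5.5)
Throughout, write $m_k:=\ord_{kn'}(q)$. Since $n'\mid kn'$, we have $q^{m_k}\equiv 1 \pmod{n'}$, so $m'\mid m_k$ for every $k$. The plan for each part is to determine, or bound, the extra factor $m_k/m'$ by applying Lemma~\ref{lem_basic} to $N:=q^{m'}$.

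\emph{Part (1).} Here $r\notin\pi(n')$ and $\gcd(r,q)=1$, so $\gcd(r,n')=1$. By the Chinese remainder theorem, $\ord_{rn'}(q)=\lcm(\ord_r(q),m')$. Fermat gives $\ord_r(q)\mid r-1$, so this $\lcm$ divides $\lcm(r-1,m')$, which divides $(r-1)m'$.

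\emph{Part (2).} Here $r\mid n'$. Write $N=q^{m'}=1+n'\gD$, so $\nu_r(N-1)=\nu_r(n')+\nu_r(\gD)\ge 1$. We already know $m'\mid \ord_{rn'}(q)$. We also have $q^{rm'}-1=N^r-1=(N-1)(1+N+\cdots+N^{r-1})$, and the second factor is $\equiv r\equiv 0\pmod r$, so $rn'\mid N^r-1$ and hence $\ord_{rn'}(q)\mid rm'$.
\begin{itemize}
\item If $\nu_r(\gD)>0$, then $rn'\mid n'\gD=N-1$, so the order is $m'$.
\item If $\nu_r(\gD)=0$, then $\nu_r(N-1)=\nu_r(n')<\nu_r(rn')$, so $m'$ itself is not the order. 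Let $e$ be the order and write $e=m'd$ with $d\mid r$. Since $d\neq 1$ and $r$ is prime, $d=r$, so the order is $rm'$.
\end{itemize}
Lemma~\ref{lem_basic} is not strictly needed in the second case, because the divisibility by $r$ already forces $d=r$.

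\emph{Part (3).} Induct on the number of prime factors of $k$, counted with multiplicity. Since $\gcd(k,q)=1$, every prime factor $r$ of $k$ is coprime to $q$. Write $k=rk'$. By induction, $\ord_{k'n'}(q)\le k'm'$. Apply parts (1) and (2) with $k'n'$ in place of $n'$:
\begin{itemize}
\item If $r\mid k'n'$, part (2) gives $\ord_{kn'}(q)\le r\cdot\ord_{k'n'}(q)$.
\item If $r\nmid k'n'$, part (1) gives $\ord_{kn'}(q)\le (r-1)\ord_{k'n'}(q)\le r\cdot\ord_{k'n'}(q)$.
\end{itemize}
In either case $\ord_{kn'}(q)\le r k' m'=km'$.

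The only care needed is the hypothesis $n'\ge 2$ when passing to $k'n'$; this holds automatically, since $k'n'\ge n'\ge 2$. The base case $k=1$ is trivial. I expect no real obstacle: the main point is simply noticing that part (2) only requires the divisibility of $1+N+\cdots+N^{r-1}$ by $r$, which sidesteps the $(r,f)=(2,1)$ exception in Lemma~\ref{lem_basic}.
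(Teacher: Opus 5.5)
Your proof is correct and follows the paper's argument essentially step for step. In (1) you use CRT plus Fermat; in (2) you pin $\ord_{rn'}(q)$ to $\{m', rm'\}$ and decide between the two by whether $r\mid\gD$; in (3) you induct on the prime factors of $k$ using (1) and (2). The only difference is that in (2) you get $rn'\mid q^{rm'}-1$ directly from the factorization $N^r-1=(N-1)(1+N+\cdots+N^{r-1})$ instead of citing Lemma~\ref{lem_basic}, which is a slightly more self-contained route to the same divisibility.
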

\bpf
(1) We have $r\mid (q^{r-1}-1)$, hence if $\gcd(r,n')=1$, then $\ord_{n'r}(q)=\lcm(\ord_r(q),\ord_{n'}(q))\mid (r-1)m'$.

(2) Next, assume that $r$ divides $n'$. By Lemma \ref{lem_basic} we deduce that $rn'\mid (q^{rm'}-1)$, so $\ord_{rn'}(q)$ divides $rm'$. It follows that $\ord_{rn'}(q)$ is either $m'$ or $rm'$. Now if~$r\mid (q^{m'}-1)/n'$, then~$\ord_{rn'}(q)=m'$, and if $r\nmid (q^{m'}-1)/n'$, then $\ord_{rn'}(q)$ is not equal to~$m'$ and so it is equal to $rm'$.

(3) If $k=1$, the claim is trivially true; and if $r$ is a prime with $r\mid k$, then by \Le{lem_ordqn}, parts (1) and (2), we have $\ord_{kn'}(q)\leq r\cdot\ord_{(k/r)n'}(q)$. Now the claim follows by induction.
\epf
With the above preparation, we are now ready to derive necessary and sufficient conditions for condition~\eqref{eqn_nn0rCond} to hold.
\begin{proposition}\label{prop_nn0rChar}
Let $n,n',k$ be positive integers with $n=kn'$, and set $m'=\ord_{n'}(q)$. Write $\pi^*(n'):=\{r\in \pi(n')\, :\, \nu_r(n')=\nu_r(q^{m'}-1)\}$. Then $\ord_{kn'}(q)=km'$ if and only if $\pi(k)\subseteq \pi^*(n')$ and, moreover, if $2\in \pi^*(n')$ and $n'\equiv 2\bmod 4$, then $\nu_2(k)\leq 1$.
\end{proposition}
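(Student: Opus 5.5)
The plan is to reduce the statement to a chain of prime steps and apply Lemma~\ref{lem_ordqn} at each step. Implicitly $\gcd(kn',q)=1$; the case $k=1$ is trivial. Write $k=r_1r_2\cdots r_s$ as a product of primes, in any order, and set $n_j:=r_1\cdots r_j n'$ and $m_j:=\ord_{n_j}(q)$, so that $m_0=m'$. By Lemma~\ref{lem_ordqn}(1),(2), each ratio $m_j/m_{j-1}$ is either at most $r_j-1$ (when $r_j\notin\pi(n_{j-1})$), or equal to $1$ or $r_j$ (when $r_j\in\pi(n_{j-1})$). Hence $\ord_{kn'}(q)=km'$ holds if and only if $m_j=r_jm_{j-1}$ for every $j$. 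This requires $r_j\in\pi(n_{j-1})$ and $\nu_{r_j}\big((q^{m_{j-1}}-1)/n_{j-1}\big)=0$ at every step; in other words, the exponent of $r_j$ in $n_{j-1}$ must already equal its exponent in $q^{m_{j-1}}-1$.

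Next I would track valuations through this chain. At every step we have $m_{j-1}=m'\cdot(r_1\cdots r_{j-1})$. Lemma~\ref{lem_basic} computes $\nu_r(q^{m'u}-1)$ for any $u$, provided $r\mid q^{m'}-1$:
\begin{itemize}
\item if $r\nmid u$, it equals $\nu_r(q^{m'}-1)$, by Lemma~\ref{lem_basic}(1);
\item each factor $r$ in $u$ raises it by exactly one, by Lemma~\ref{lem_basic}(2), except in the case $r=2$, $\nu_2(q^{m'}-1)=1$.
\end{itemize}
Meanwhile $\nu_r(n_{j-1})=\nu_r(n')+\#\{i<j: r_i=r\}$.

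For odd $r$ the condition at each step is therefore exactly $\nu_r(n')=\nu_r(q^{m'}-1)$, that is, $r\in\pi^*(n')$; the condition then propagates automatically, since both sides increase by one per factor $r$. I must also check that a prime $r\nmid n'$ fails: then $r\notin\pi(n_{j-1})$ the first time $r$ appears, and the ratio is at most $r-1<r$. Primes of $n'$ with $\nu_r(n')<\nu_r(q^{m'}-1)$ give ratio $1$. This yields the necessity and sufficiency of $\pi(k)\subseteq\pi^*(n')$ for the odd part.

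The hardest step is the prime $2$, where Lemma~\ref{lem_basic}(2) has its exception. Suppose $2\in\pi^*(n')$ and $\nu_2(n')=\nu_2(q^{m'}-1)=f$. If $f\ge2$, the valuations increase by one per factor $2$, exactly as in the odd case. If $f=1$, i.e.\ $n'\equiv2\bmod4$, then the first factor $2$ works, but afterwards $\nu_2(q^{2m'}-1)\ge3>2=\nu_2(2n')$. At that point we have $f\geq 2$, and from then on $\nu_2(q^{m_{j-1}}-1)$ exceeds $\nu_2(n_{j-1})$ forever, so any second factor $2$ gives ratio $1$. This gives the extra condition $\nu_2(k)\le1$.

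One subtlety remains: the prime steps may be interleaved, since the order of the $r_j$ is arbitrary. I will check that multiplying $m$ by a factor coprime to $r$ does not change $\nu_r(q^m-1)$, by Lemma~\ref{lem_basic}(1), so the different primes do not interact. I will also note that $m'$ is even when $r=2$ divides $q^{m'}-1$ but not $q-1$. This is irrelevant, since the lemma is applied to $N=q^{m'}$.
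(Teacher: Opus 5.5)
Your proposal is correct and is essentially the paper's proof written out along an explicit chain. The paper inducts on $k$, splitting off one prime $r\mid k$ at a time with Lemma~\ref{lem_ordqn} and packaging your valuation bookkeeping via Lemma~\ref{lem_basic} as the update rule $\pi^*(rn')=\pi^*(n')$ for $r\in\pi^*(n')$ (with $2$ removed when $r=2$ and $n'\equiv 2\bmod 4$), whereas you track $\nu_r(n_{j-1})$ and $\nu_r(q^{m_{j-1}}-1)$ directly at each prime step; your closing remark about $m'$ being even is vacuous (since $2\mid q^{m'}-1$ forces $q$ odd, hence $2\mid q-1$), but as you say it plays no role.
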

\begin{proof}
%
We proceed by induction on~$k$. The claim holds trivially if $k=1$. Assume that $k\geq 2$, and let $r\mid k$. By applying ~\Le{lem_ordqn}, part~(3) twice, we have
\[\ord_{kn'}(q)=\ord_{(k/r)(rn')}(q)\leq (k/r)\ord_{rn'}(q)\leq k\cdot \ord_{n'}(q)=km',\]
hence $\ord_{kn'}(q)=km'$ if and only if $\ord_{rn'}(q)=rm'$ and $\ord_{kn'}(q)=(k/r)\ord_{rn'}(q)$. By \Le{lem_ordqn} again, $\ord_{rn'}(q)=rm'$ if and only if $r\in\pi^*(n')$. Next, we claim that if $r\in \pi^*(n')$, then
\[\pi^*(rn')=\begin{cases}
    \pi^*(n') \setminus \{2\} & \text{if } r=2 \text{ and } n' \equiv 2 \pmod{4}; \\
    \pi^*(n') & \text{otherwise}.
\end{cases}
\]
Note that if $r$ is odd and $n'$ is even, then $rn'\equiv n'\pmod{4}$, so if the above claim holds, the proposition follows by induction.

To prove this claim, suppose that $r\in \pi^*(n')$, and consider a prime $s$ dividing $rn'$. If $s\neq r$, then $\nu_s(rn')=\nu_s(n')$ and by~\Le{lem_basic}, part~(1), we have $\nu_s(q^{rm'}-1)=\nu_s(q^{m'}-1)$. And when $s=r$, we have $\nu_r(rn')=\nu_r(n')+1$ and by Lemma \ref{lem_basic}, part~(2),  we have $\nu_r(q^{r m'}-1)=\nu_r(q^{m'}-1)+1$ unless $r=2$ and $\nu_2(q^{m'}-1)=1$. Since $r=2$ and $r\in \pi^*(n)$, we have $\nu_2(q^{m'}-1)=\nu_2(n')$, hence $\nu_2(q^{m'}-1)=1$ holds if and only if $\nu_2(n')=1$, that is, $n'\equiv 2\bmod 4$. The proof of the claim is now complete.
\end{proof}

\section{Proof of Theorem \ref{thm_mainCode}}\label{sec_mainproof}

This section is devoted to the proof of Theorem \ref{thm_mainCode}. Throughout this section, let $C$ be a {\bf non-standard} irreducible cyclic code of length $n$ over $\F_q$, and let $m=\ord_n(q)$. 
{Since $C$ is non-standard, we may assume that $m\geq 2$.} 
We will always assume that the $q$-ary irreducible cyclic code $C$ of length $n$ is non-degenerate. All cyclic codes of length at most $3$ are standard by \cite[Lemma 5.30]{Hollmann2023}, so we have $n\ge 4$. We take the same notation as introduced in Definition \ref{def_UL}; in particular, $\xi$ is an element of order $n$ in $\F_{q^m}^*$ and $\cU_{n,q}=\la \xi\ra$. We have $\F_q(\xi)=\F_{q^m}$ by the fact that $\ord_n(q)=m$. Let $\GL_m(q)$ be the group of invertible $\F_q$-linear transformations of $\F_{q^m}$, and we write $Z$ for its center. We regard $\F_q^*$ as a subgroup of $\GL_m(q)$ which acts on $\F_{q^m}$ via left multiplication, so $Z=\F_q^*$.  Let $\GL_1(q^m)=\{\rho_a:a\in\F_{q^m}^*\}$, where $\rho_a(x)=ax$ for $x\in\F_{q^m}$. Let $\cL(n,q)$ be the stabilizer of $\cU_{n,q}$ in $\GL_m(q)$. Let $\cL_{\textup{st}}(n,q)=\la\sigma,\psi\ra$, where $\sigma(x)=\xi x$, $\psi(x)=x^q$ for $x\in\F_{q^m}$. It is routine to check that  $\cL_{\textup{st}}(n,q)$ is the stabilizer of $\cU_{n,q}$ in $\GL_1(q^m)\rtimes \la\psi\ra$,
and it is a subgroup of $Z$. Moreover, $\cL_{\textup{st}}(n,q)$ acts irreducibly on $\F_{q^m}$.

We identify the coordinate positions of $\F_q^n$ with $\cU_{n,q}$ via $i\mapsto \xi^{i-1}$ for $1\le i\le n$, so that $\PAut(C)$ is equal to $\cL(n,q)$ by \cite[Theorem 5.15]{Hollmann2023}.  The code $C$ is non-standard if and only if $\cL(n,q)\ne\cL_{\textup{st}}(n,q)$. Let $\overline{\cL(n,q)}$ and $\overline{\cL_{\textup{st}}(n,q)}$ be the corresponding quotient groups in $\PGL_m(q)=\GL_m(q)/Z$, and write $\bar{\sigma},\bar{\psi}$ for the respective images of $\sigma,\psi$. Let
\begin{equation}\label{eqn_Xdef}
  X=\{\la\xi^i\ra_{\F_q}:0\le i\le n-1\},
\end{equation}
where $\la\xi^i\ra_{\F_q}$ is the one-dimensional $\F_q$-space spanned by $\xi^i$. And we note that $|X|=n/n_0,$ where $n_0:=\gcd(n,q-1)$.

Let $\sigma_1$, $\psi_1$ be the corresponding elements of $\textup{Sym}(X)$ induced by the actions of $\sigma$ and $\psi$ on $X$ respectively.
\begin{lemma}\label{lem_ordbarpsi}
Take notation as above, let $n_0=\gcd(n,q-1)$, {and write $n_1:=n/n_0$}. We have
\begin{itemize}
  \item[(1)]$o(\bar{\sigma})=n/n_0$, $o(\bar{\psi})=m$.
  \item[(2)]
{$o(\sigma_1)=n_1$, and either $o(\psi_1)=\ord_{n_1}(q)=m$, or the code $C$ is covered by Example \ref{exa_imp}}.
\end{itemize}
\end{lemma}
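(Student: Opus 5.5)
The plan is to compute all four orders by describing exactly when a power is scalar, or fixes every line in $X$. For the $2$-part irregularity in the last step I will use \Le{lem_basic} (Lifting the Exponent).

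\emph{Part (1).} $\bar\sigma^i=1$ means $\sigma^i=\rho_{\xi^i}\in Z=\F_q^*$, i.e.\ $\xi^i\in\cU_{n,q}\cap\F_q^*$. This intersection is the unique subgroup of order $n_0=\gcd(n,q-1)$ in $\la\xi\ra$, so $o(\bar\sigma)=n/n_0$. For $\bar\psi$, suppose $\psi^i=\lambda\,\textup{id}$, so $x^{q^i}=\lambda x$ for all $x$. Putting $x=1$ gives $\lambda=1$, so $\psi^i=\textup{id}$, which means $m\mid i$. Hence $o(\bar\psi)=m$.

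\emph{Part (2), orders of the permutations.} The set $X$ is in bijection with $\la\xi\ra/\la\xi^{n_1}\ra$, since $\la\xi^{n_1}\ra=\cU_{n_0,q}=\la\xi\ra\cap\F_q^*$. Now $\sigma_1^i$ is trivial iff $\xi^i\F_q=\F_q$, i.e.\ iff $n_1\mid i$; so $o(\sigma_1)=n_1$. Next, $\psi_1^i$ fixes $\la\xi^j\ra_{\F_q}$ iff $\xi^{j(q^i-1)}\in\F_q^*$. For all $j$ simultaneously this holds iff $\xi^{q^i-1}\in\la\xi^{n_1}\ra$, i.e.\ iff $n_1\mid q^i-1$. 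Thus $o(\psi_1)=m_1:=\ord_{n_1}(q)$, and clearly $m_1\mid m$.

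\emph{Part (2), the case $m_1<m$.} Here I must produce a triple as in \eqref{eqn_nn0rCond}. Let $k$ be the least positive integer with $n\mid q^{m_1k}-1$; then $m=m_1k$ and $k\ge2$. Every prime $r\mid k$ divides $n_0$, hence $q-1$, because $n/n_1=n_0$. So \Le{lem_basic} applies to $N=q^{m_1}$: raising to a power coprime to $r$ leaves $\nu_r$ unchanged, and raising to the $r$-th power increases it by exactly one, except when $r=2$ and $\nu_2(q^{m_1}-1)=1$. Minimality of $k$ then gives $\nu_r(k)=\nu_r(n)-\nu_r(q^{m_1}-1)$ for every $r\mid k$. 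Set $n'=n/k$. These valuation identities give $n'\mid q^{m_1}-1$. Also $n_1\mid n'$, since $\nu_r(n_1)\le\nu_r(q^{m_1}-1)$. Hence $\ord_{n'}(q)=m_1$ and $\ord_n(q)=k\cdot\ord_{n'}(q)$.

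\emph{Remaining checks.} First, $n'\ge2$: otherwise $m=\ord_n(q)=n$, which is impossible as $\ord_n(q)\le\varphi(n)<n$. Second, $n\ge6$: this follows from \Le{lem_C2para}, since the only excluded case $n=4$ gives a standard code, contrary to our standing assumption that $C$ is non-standard. So $C$ is covered by Example~\ref{exa_imp}.

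The main obstacle is the exceptional $2$-adic case in \Le{lem_basic}(2), where the $2$-valuation can jump by more than one. I would treat it separately. If $\nu_2(q^{m_1}-1)=1$ and $4\mid n$, the first squaring already absorbs several powers of $2$. In that case I would redefine $n'$ by removing only the powers of $2$ actually needed, using \Le{lem_ordqn}(2) step by step along the primes of $n_0$. The target is still $n'$ with $\ord_{n'}(q)=m_1$ and $n/n'=m/m_1$. If necessary, I would check the result against the characterization in Proposition~\ref{prop_nn0rChar}.
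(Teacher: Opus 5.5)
Part (1) and the identifications $o(\sigma_1)=n_1$ and $o(\psi_1)=\ord_{n_1}(q)$ are correct. Evaluating $\psi^i=\lambda\,\textup{id}$ at $x=1$ is a cleaner argument for $o(\bar\psi)=m$ than the paper's divisibility estimate. Your final checks $n'\ge 2$ and $n\ge 6$ via Lemma~\ref{lem_C2para} also make explicit something the paper leaves tacit.

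The gap is in the case $m_1<m$. Your valuation argument with Lemma~\ref{lem_basic} covers only the non-exceptional situation. For $r=2$, $\nu_2(q^{m_1}-1)=1$, $2\mid k$ you give only an unexecuted plan (``redefine $n'$ by removing only the powers of $2$ actually needed \dots\ if necessary, check against Proposition~\ref{prop_nn0rChar}''). As it stands, nothing shows in that case that $n/k$ divides $q^{m_1}-1$, so $(n,n/k,k)$ is not shown to satisfy \eqref{eqn_nn0rCond}. Two further points are unjustified. The claim that every prime $r\mid k$ divides $n_0$ ``because $n/n_1=n_0$'' does not follow as stated. Applying Lemma~\ref{lem_basic}(1) at primes $r\mid n$ with $r\nmid k$ silently needs $r\mid q^{m_1}-1$.

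The missing idea, which settles all three points, is a bound you never write down. Since $n_0\mid q-1$ and $n_1\mid q^{m_1}-1$, we have $n\mid (q-1)(q^{m_1}-1)$. Hence every prime divisor of $n$ divides $q^{m_1}-1$. Also $n\mid n_0(q^{m_1}-1)\mid q^{m_1n_0}-1$, so $k\mid n_0$.

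In the exceptional case, $q^{m_1}\equiv 3\pmod 4$ forces $q\equiv 3\pmod 4$. Therefore $\nu_2(n)\le \nu_2(q-1)+\nu_2(q^{m_1}-1)=2$. Since $2\mid k$ requires $\nu_2(n)\ge 2$, we get $\nu_2(n)=2$. The order of $q^{m_1}\equiv 3$ modulo $4$ is $2$, so $\nu_2(k)=1=\nu_2(n)-\nu_2(q^{m_1}-1)$. Your identity thus holds in this case too, and $n'=n/k$ works with no redefinition.

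For comparison, the paper sidesteps the $2$-adic issue entirely. From $n\mid\gcd\bigl(q^m-1,(q^{m_1}-1)(q-1)\bigr)=(q^{m_1}-1)\gcd(q-1,k)$ it sets $n'=\gcd(n,q^{m_1}-1)$ and shows $n/n'\mid\gcd(q-1,k)$. It then squeezes $m=\ord_n(q)\le (n/n')\,\ord_{n'}(q)\le k m_1=m$ using Lemma~\ref{lem_ordqn}(3). This forces $n/n'=k$ and $\ord_{n'}(q)=m_1$ without any lifting-the-exponent case analysis.
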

\begin{proof}
(1) The claim on $o(\bar{\sigma})$ is trivial. Let $m_0=o(\bar{\psi})$, which is a divisor of $o(\psi)=m$. For $x\in\F_{q^m}^*$, we have $\la x^{q^{m_0}}\ra_{\F_q}=\la x\ra_{\F_q}$, i.e., $x^{q^{m_0}-1}\in\F_q^*$. It follows that $q^m-1$ divides $(q^{m_0}-1)(q-1)$. If $m_0\ne m$, then $\frac{q^m-1}{q^{m_0}-1}\ge q^{m_0}+1>q-1$ which is a contradiction. Hence, we have $m_0=m$ as desired.

(2) The claim on $o(\sigma_1)$ is trivial. Let 
$m_1=o(\psi_1)$. Suppose that $m_1<m$. The number $m_1$ is the smallest positive integer $i$ such that $\xi^{q^i-1}\in\F_q^*$, or, equivalently, such that $n\mid (q^i-1)(q-1)$.  Since $\gcd(n_1,(q-1)/n_0)=1$, this is equivalent to $n_1\mid (q^i-1)$, so $m_1=\ord_{n_1}(q)$. We deduce that $m_1\mid m$. So $k:=m/m_1\in\mathbb{N}$, and we conclude that $n$ divides
\[
  \gcd\Big(q^m-1,(q^{m_1}-1)(q-1)\Big)=(q^{m_1}-1)\gcd(q-1,k).
\]
Let $n'=\gcd(n,q^{m_1}-1)$, and let $k':=\frac{n}{n'}$; then $k'\in\mathbb{N}$. Let $a\in\mathbb{N}$ be such that $na=(q^{m_1}-1)\gcd(q-1,k)$. Then $q^{m_1}-1\mid n'a$, and hence $k'=(na)/(n'a)$ divides $\gcd(q-1,k)$; in particular, $k'\mid k$. We have $\ord_{n'}(q)\mid m_1$, and, using Lemma \ref{lem_ordqn}, we have that $m=\ord_{k'n'}(q)\le k'\cdot\ord_{n'}(q)\le k'm_1\leq km_1=m$. It follows that $\ord_{n'}(q)=m_1$, and $n/n'=k'=m/m_1=k$. So the triple $(n,n',k)$ satisfies the conditions in \eqref{eqn_nn0rCond}, and hence the code $C$ arises from Example \ref{exa_imp}. This completes the proof.
\end{proof}

By Lemma \ref{lem_ordbarpsi}, we may assume without loss of generality that $\ord_{n_1}(q)=m$ and then $o(\bar{\psi})=m$. Let $V=\F_{q^m}$, which we regard as a vector space over $\F_q$. We write $\textup{End}(V)$ as the set of all linear transformations (endomorphisms) of $V$. We have the following possible embeddings of   $\cL(n,q)$ into classical groups:
\begin{itemize}
  \item[(A)] $\cL(n,q)$ is a subgroup of $\GL_m(q)$ but is not in the similarity group of a non-degenerate unitary, alternating or quadratic form on $V$;
  \item[(B)] $\cL(n,q)$ is a subgroup of the similarity group of a non-degenerate unitary form on $V$;
  \item[(C)] $\cL(n,q)$ is a subgroup of the similarity group of a non-degenerate quadratic form on $V$;
  \item[(D)] $\cL(n,q)$ is a subgroup of the similarity group of a non-degenerate alternating form on $V$, and it is not in the similarity group of a quadratic form when $q$ is even.
\end{itemize}
Let $\kappa=0$ in the case (A), let $\kappa$ be the corresponding non-degenerate unitary or quadratic form in the cases (B) and (C), and let $\kappa$ be the corresponding non-degenerate alternating form in the case (D). We take the notation $\Omega(V,\kappa)$, $I(V,\kappa)$, $\Delta(V,\kappa)$ for the classical groups associated to $(V,\kappa)$ as in \cite{KL1990}. In particular, $\Delta(V,\kappa)$ is the group of similarities, and it contains $\cL(n,q)$ as a subgroup. We add a bar to this notation to denote their respective quotient images in $\PGL(V)$. We impose the following extra restrictions:
\begin{itemize}
  \item[(1)] For (C) we assume that $m\ge 3$.  (If $m=2$, $\Delta(V,\kappa)$ is a subgroups of $\GL_2(q)$ of Aschbacher class $\cC_2$ or $\cC_3$ according as $\kappa$ is hyperbolic or elliptic, cf. \cite[p.~165]{KL1990}.)
  \item[(2)] For (C) we assume that $\kappa$ is not hyperbolic when $m=4$. (If $\kappa$ is hyperbolic, then $\Delta(V,\kappa)$ is a subgroup of $\GL_4(q)$ of Aschbacher class $\cC_7$, cf. \cite[Proposition 2.9.1 (iv)]{KL1990}. The latter case will be handled in Proposition \ref{prop_C7}.)
  \item[(3)] For (D) we assume that $m\ge 4$, since $\SL_2(q)$ preserves a non-degenerate symplectic form  on $V$ and thus equals $\Sp_2(q)$, cf. \cite[Proposition 2.9.1 (i)]{KL1990}.
\end{itemize}

\begin{proposition}\label{prop_PreProof}
Take notation as above. If either $\Omega(V,\kappa)$ is not quasisimple or $\cL(n,q)$ contains $\Omega(V,\kappa)$, then the claims in Theorem \ref{thm_mainCode} hold.
\end{proposition}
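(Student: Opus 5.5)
The plan is to treat the two hypotheses separately. In each case we either identify $\cU_{n,q}$ explicitly, or show that the configuration cannot occur under the standing assumptions $n\ge 4$, $m\ge 2$, $\ord_{n_1}(q)=m$.

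\textbf{First hypothesis: $\Omega(V,\kappa)$ is not quasisimple.} The extra restrictions (1)–(3) imposed before the proposition leave only a short list of small cases. In case (A) these are $\SL_2(2)$ and $\SL_2(3)$. In the other cases they are the usual small exceptions, such as $\SU_3(2)$, $\Sp_4(2)$, $\Omega_3(3)$, $\Omega_4^-(2)$ and $\Omega_4^-(3)$ (we will read off the exact list from \cite{KL1990}*{Proposition 2.9.2}). In each of these, $q^m$ is tiny, so we have $q^m\le 81$, for example. We would therefore enumerate all $n$ with $n\mid q^m-1$ and $\ord_n(q)=m$, and determine $\cL(n,q)$ directly by computation or by hand. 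We expect to find only the following: $n=q^m-1$, which is Example \ref{exa_whole}; Example \ref{exa_C8} with $q\in\{2,3\}$; codes from Example \ref{exa_imp} (these are already excluded by Lemma \ref{lem_ordbarpsi}); codes of length $n\le 3$; or standard codes.

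\textbf{Second hypothesis: $\Omega:=\Omega(V,\kappa)\le\cL(n,q)$.} Here we use orbit counting. Since $\cL(n,q)$ stabilizes $\cU:=\cU_{n,q}$, the set $\cU$ is a union of $\Omega$-orbits on $V\setminus\{0\}$, and in particular it contains the $\Omega$-orbit of $1$. We split by the type of $\kappa$.

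\begin{itemize}
\item In case (A), $\SL_m(q)$ is transitive on $V\setminus\{0\}$, and in case (D), $\Sp_m(q)$ is transitive on $V\setminus\{0\}$. In both cases $\cU=\F_{q^m}^*$, so $n=q^m-1$ and we are in Example \ref{exa_whole}.

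\item In cases (B) and (C), the orbits of $\Omega$ on nonzero vectors are the nonzero singular (isotropic) vectors and the level sets $\{\kappa(x)=c\}$ for $c\neq 0$; this uses $m\ge 3$, which restriction (1) supplies.

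\item If $1$ is non-singular, then $|\cU|\ge|\{\kappa(x)=\kappa(1)\}|$, which is roughly $q^{m-1}$ (or $q_0^{m-1}(q_0^m\pm1)$ in the unitary case, where $q=q_0^2$). Because $n\mid q^m-1$, this forces $n=(q^m-1)/d$ with $d$ small. We then use that $\cU$ is closed under multiplication by $\xi$ and under scalars $\cU\cap\F_q^*$. This should force $d\mid q-1$ and $\cU=\F_{q^m}^*$-cosets compatible with Construction \ref{exa_ext} applied to Example \ref{exa_whole}, or else give an arithmetic contradiction with the exact level-set sizes.

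\item If $1$ is singular, then $\cU$ contains all singular vectors. The count of nonzero singular vectors, compared with divisors of $q^m-1$, should leave only the elliptic quadric with $m=4$ and $n=(q-1)(q^2+1)$, which is Example \ref{exa_C8}. In the unitary and other orthogonal types, a divisibility mismatch should rule this out, since the number of isotropic vectors is divisible by a power of $p$ or by a factor not dividing $q^m-1$.
\end{itemize}

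\textbf{Main obstacle.} The hard step is the arithmetic in cases (B) and (C). There we must show that a subgroup of $\F_{q^m}^*$ containing a full level set, or the whole singular set, of $\kappa$, and being a union of such sets, must be one of the listed configurations. We would first exploit that $\cU\cdot\cU=\cU$: the product of two elements with a given norm or quadratic value lands in further level sets, so $\cU$ quickly absorbs most level sets. We would then compare the resulting $|\cU|$ against $q^m-1$ using Lemma \ref{lem_basic}.
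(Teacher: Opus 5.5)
\textbf{Verdict: there is a genuine gap, in cases (B) and (C) of the second hypothesis.} The non-quasisimple case is the same finite check the paper does. Your sample list is off, though, so rely on the actual table. $\Omega_4^-(2)\cong A_5$ and $\Omega_4^-(3)\cong A_6$ are simple. Conversely, $\SU_2(2)$ and $\SU_2(3)$, i.e.\ $(m,q)=(2,4),(2,9)$, must be included; the latter yields $n=16$ over $\F_9$. Your cases (A) and (D) also match the paper.

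\textbf{The missing idea.} You never use two facts that settle (B) and (C) at once.
\begin{itemize}
\item $\la\sigma\ra\le\cL(n,q)$ is transitive on $\cU$. Every element of $\Delta(V,\kappa)$ maps singular vectors to singular vectors and non-singular vectors to non-singular vectors. Hence $\cU$ lies \emph{entirely} in the singular set or \emph{entirely} in the non-singular set.
\item $\gcd(n,q)=1$.
\end{itemize}
In the non-singular case, $\cU$ is a union of $\Omega$-orbits, and these are full level sets $\{\kappa(x)=c\}$ with $c\neq 0$. Each such level set has size divisible by $p$:
\begin{itemize}
\item $q_0^{m-1}(q_0^m-(-1)^m)$ in the unitary case, where $q=q_0^2$;
\item $q^{m-1}\pm q^{(m-1)/2}$ or $q^{m-1}\mp q^{m/2-1}$ in the orthogonal case with $m\ge 3$.
\end{itemize}
So $p\mid n$, a contradiction, and this branch is empty. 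In the singular case, $\cU$ is \emph{exactly} the set of nonzero singular vectors. Only then does comparing its size with $q^m-1$ leave just the elliptic case $m=4$ of Example~\ref{exa_C8}.

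\textbf{What fails in your version.}
\begin{itemize}
\item Your non-singular branch contains no argument: ``$n=(q^m-1)/d$ with $d$ small $\ldots$ should force $\ldots$'' is a hope. The outcome you anticipate, Construction~\ref{exa_ext} applied to Example~\ref{exa_whole}, never occurs.
\item The tool you propose for the main obstacle does not work. $\kappa$ is merely some form preserved up to similarity by $\cL(n,q)$. It is not multiplicative for field multiplication in $\F_{q^m}$; compare $Q(x)=\tr_{\F_{q^2}/\F_q}(\gamma x^{q^2+1})$ in Example~\ref{exa_C8}. So $\cU\cdot\cU=\cU$ gives no control over which level sets products land in.
\item In the singular branch you only get that $\cU$ contains all singular vectors, i.e.\ $n\ge$ their number, not equality. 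Your comparison with divisors of $q^m-1$ therefore presupposes what you have not shown.
\item Your claim that the number of nonzero singular vectors is divisible by a power of $p$ is false. It is prime to $p$; for example, it is $q^{m-1}-1$ in the parabolic case. What rules out the other types is that this number does not divide $q^m-1$.
\item Restriction (1) concerns only case (C). So your appeal to $m\ge 3$ does not cover the unitary case with $m=2$, where $\SU_2(q_0)$ is not transitive on nonzero isotropic vectors.
\end{itemize}
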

\begin{proof}
For case (A), if $\cL(n,q)$ contains $\SL_m(q)$, then $\cU_{n,q}=\F_{q^m}^*$ by the fact that $\SL_m(q)$ is transitive on nonzero vectors of $\F_{q^m}$. It follows that $n=q^m-1$ and $C$ is permutation equivalent to the code in Example \ref{exa_whole}. If $m=2$ and $q\in\{2,3\}$, then $q^m-1$ is a prime, and so $n=p^m-1$. The binary irreducible cyclic code of length $3$ is standard, and the ternary irreducible cyclic code of length $7$ is permutation equivalent to the code in Example \ref{exa_whole}. If $(m,q)\not\in\{(2,2),(2,3)\}$, then $\SL_m(q)$ is quasisimple by \cite[Proposition 2.9.2]{KL1990}.

For case (B), if $(m,q)=(2,4)$, then $q^m-1=15$ and so $n\in\{5,15\}$. The corresponding codes of those lengths are covered by Examples \ref{exa_repetition} and \ref{exa_whole}, respectively. If $(m,q)=(2,9)$, then $q^m-1=80$,  and so $n\in \{5,10,16,20,40,80\}$. The code $C$ of length $80$ over $\F_9$ is covered by Example \ref{exa_whole}. The triple $(16,8,2)$ satisfies the conditions in \eqref{eqn_nn0rCond} with $q=9$, so the code $C$ of length $16$ over $\F_9$ is covered by Example \ref{exa_imp}. The irreducible cyclic codes of the remaining lengths over $\F_9$ are standard upon direct check.  If $(m,q)=(3,4)$, then $q^m-1=63$. Since $n$ divides both $63$ and $|\Delta(V,\kappa)|=2^7\cdot 3^2\cdot 5$, we deduce that $n=9$. The triple $(9,3,3)$ satisfies the conditions in \eqref{eqn_nn0rCond}, so the corresponding code is covered by Example \ref{exa_imp}. If $(m,q)\not\in\{ (2,4), (2,9),(3,4)\}$, then $\SU_m(\sqrt{q})$ is quasisimple. The group $\SU_m(\sqrt{q})$ has $\sqrt{q}$ orbits on $\F_{q^m}^*$, and their sizes are $(q^{m/2}+(-1)^m)(q^{(m-1)/2}-(-1)^m)$ with multiplicity $1$, $q^{(m-1)/2}(q^{m/2}-(-1)^m)$ with multiplicity $\sqrt{q}-1$, cf. \cite[Lemma 2.10.5]{KL1990} and \cite[Table 1.1 and Theorem 5.41]{Wan1993}. Those orbits are distinguished by the $\kappa$-value of their vectors, and $\Delta(V,\kappa)$ stabilizes the orbit corresponding to $\kappa$-value $0$.  If $\cL(n,q)$ contains $\SU_m(\sqrt{q})$, then by the fact $\gcd(n,q)=1$ we deduce that $n=(q^{m/2}+(-1)^m)(q^{(m-1)/2}-(-1)^m)$. We check that $n$ does not divide $q^m-1$ for any $(m,q)$ pairs under consideration, so neither case occurs.

For case (C), we have $m\ge 3$ and $\kappa$ is not hyperbolic if $m=4$. If $(m,q)=(3,3)$, then $q^m-1=26$ and $|\Delta(V,\kappa)|=2^4\cdot 3$, and there are no feasible lengths $n\ge 4$. If $(m,q)\ne (3,3)$, then $\Omega(V,\kappa)$ is quasisimple. If $\cL(n,q)$ contains $\Omega(V,\kappa)$, we deduce that $n=q^{m-1}-1$, $(q^{m/2}-1)(q^{m/2-1}+1)$ or $(q^{m/2}+1)(q^{m/2-1}-1)$ according as $\kappa$ is parabolic, hyperbolic or elliptic as in case (B). We check that $n$ divides $q^m-1$ only if $m=4$ and $\kappa$ is elliptic, and the corresponding code is covered by Example \ref{exa_C8}.

For case (D), we have $m\geq 4$. If $(m,q)=(4,2)$, then $q^m-1=15$ and so $n\in\{5,15\}$. The binary codes of those lengths are covered by Examples \ref{exa_repetition} and \ref{exa_whole}, respectively. If $(m,q)\ne (4,2)$, then $\Sp_{m}(q)$ is quasisimple. The group $\Sp_{m}(q)$ is transitive on $\F_{q^m}^*$ by \cite[Lemma 2.10.5]{KL1990}, and correspondingly $C$ is covered by Example \ref{exa_whole}.
\end{proof}

In view of Proposition \ref{prop_PreProof}, we can assume that $\Omega(V,\kappa)$ is quasisimple and $\cL(n,q)$ does not contain $\Omega(V,\kappa)$ for the remaining part of this section. By the main theorem in \cite{Aschbacher1984}, $\cL(n,q)$ is either contained in a maximal subgroup $M$ of $\Delta(V,\kappa)$, where $M$ contains the center $Z$ of $\GL_m(q)$, and is one of Aschbacher classes $\cC_1$-$\cC_8$, or $\cL(n,q)$ is a subgroup of Aschbacher class $\cS$.  The subgroup $M$ in the former case is not of Aschbacher class $\cC_8$ by our choice of the form $\kappa$ for cases (A)-(D).

We will now deal with each of the classes $\cC_1$-$\cC_8$, and the class $\cS$. The following result, which relies on the classification of finite permutation groups with a transitive cyclic subgroup in \cite{LiPraeger2012}, plays a key role in handling the Aschbacher class $\cS$.
\begin{lemma}\label{lem_C9faithful}
Let $X$ be as in \eqref{eqn_Xdef}, and let $n_0=\gcd(n,q-1)$, $n_1=n/n_0$. If $\overline{\cL(n,q)}$ is an almost simple group, then it acts faithfully on $X$, and $\la\bar{\sigma}\ra$ induces a regular cyclic subgroup on it. Moreover, we have the following possibilities:
\begin{itemize}
  \item[(a)]$\overline{\cL(n,q)}=M_{23}$, and $n_1=23$;
  \item[(b)]$\overline{\cL(n,q)}= \PSL_{2}(11)$ or $M_{11}$, and $n_1=11$;
  \item[(c)]$\overline{\cL(n,q)}=A_{n_1}$ or $S_{n_1}$ with $n_1\ge 5$, and the former case occurs only when $n_1$ is even.
  \item[(d)]$\PGL_d(q')\leq \overline{\cL(n,q)}\leq \PGaL_{d}(q')$ for some $d\ge2$ and prime power $q'$, and $n_1=\frac{q'^d-1}{q'-1}$;
\end{itemize}
\end{lemma}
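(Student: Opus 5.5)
The plan is to first show that $\overline{\cL(n,q)}$ acts faithfully on $X$, then read off the possibilities from the Li--Praeger classification \cite{LiPraeger2012} of primitive/almost simple groups with a regular cyclic subgroup.

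\emph{Faithfulness.} $\cL(n,q)$ permutes the lines $\la\xi^i\ra_{\F_q}$, so it acts on $X$. The kernel $K$ of $\cL(n,q)\to\textup{Sym}(X)$ consists of linear maps that fix every line $\la\xi^i\ra_{\F_q}$. Since $\F_q(\xi)=\F_{q^m}$, these lines span $V$. So $K$ is a normal subgroup acting on each such line by a scalar, and we must show $K\le Z$. Then $\bar K$ is a normal subgroup of the almost simple group $\overline{\cL(n,q)}$, and $\bar K$ is abelian because it is diagonalizable with respect to a spanning set of common eigenvectors. An almost simple group has no nontrivial abelian normal subgroup, so $\bar K=1$, i.e.\ $K\le Z$, and the action of $\overline{\cL(n,q)}$ on $X$ is faithful.

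\emph{Regular cyclic subgroup.} The element $\bar\sigma$ permutes $X$ cyclically. Indeed, $\sigma$ maps $\la\xi^i\ra_{\F_q}$ to $\la\xi^{i+1}\ra_{\F_q}$, so $\la\sigma_1\ra$ is transitive on $X$. By Lemma \ref{lem_ordbarpsi}, $o(\sigma_1)=n_1=|X|$, so $\la\sigma_1\ra$ is regular, and by faithfulness $\la\bar\sigma\ra$ induces it.

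\emph{Applying the classification.} Now $G:=\overline{\cL(n,q)}$ is an almost simple permutation group of degree $n_1$ containing a regular cyclic subgroup. If $G$ is imprimitive, we need a reduction. The main choice is to invoke \cite{LiPraeger2012}, whose classification of finite permutation groups with a transitive cyclic subgroup describes the almost simple (quasiprimitive) case. Imprimitive almost simple examples there should be excluded, or reduced, using the structure of the block systems. Failing that, I would rely on Li's earlier theorem that primitive groups with a regular cyclic subgroup are $A_{n_1}$, $S_{n_1}$, $\PGL_d(q')\le G\le\PGaL_d(q')$, $\PSL_2(11)$, $M_{11}$ or $M_{23}$, together with the Li--Praeger result that a quasiprimitive group with a transitive cyclic subgroup is primitive or very restricted. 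This is the step I expect to be the main obstacle: making sure the imprimitive almost simple cases in \cite{LiPraeger2012} either cannot occur here or fall into (d) on a block system. Here I would use that $\la\bar\psi\ra$ fixes a point and has order $m$ from Lemma \ref{lem_ordbarpsi}.

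\emph{Parity condition in (c).} For (c), if $n_1$ is odd then the $n_1$-cycle $\bar\sigma$ is even, so $A_{n_1}$ is not excluded by parity alone. Instead we argue that $\bar\psi$ acts on $X\cong\Z_{n_1}$ as $i\mapsto qi$, and that $\la\bar\sigma,\bar\psi\ra$ normalizing considerations force $G=S_{n_1}$. Alternatively, if the statement's restriction comes from the listed exceptional degrees in \cite{LiPraeger2012}, namely $A_{n_1}$ with a regular cyclic subgroup requiring $n_1$ odd, then I would double-check the orientation. In the list, $A_{n_1}$ contains an $n_1$-cycle exactly when $n_1$ is odd. So I would re-derive this clause directly from which of $A_{n_1}$, $S_{n_1}$ actually contain $\bar\sigma$ together with $\bar\psi$, and if $\bar\psi$ induces an odd permutation, only $S_{n_1}$ survives.
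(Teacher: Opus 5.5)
Your first two steps coincide with the paper's proof. Elements acting trivially on $X$ are diagonal in the basis $1,\xi,\dots,\xi^{m-1}$, so the kernel of $\overline{\cL(n,q)}$ on $X$ is an abelian normal subgroup of an almost simple group and hence trivial. Also, $\la\bar\sigma\ra$ acts on $X$ as an $n_1$-cycle. The paper then obtains (a)--(d) by applying \cite[Theorem 1.2]{LiPraeger2012} directly to this faithful almost simple action with a regular cyclic subgroup. There is no separate imprimitivity reduction, and your proposed use of $\bar\psi$ fixing a point plays no role.

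The step you call the main obstacle is not really an independent issue. Suppose $\mathcal{B}$ were a nontrivial block system on $X$. The subgroup of $\la\bar\sigma\ra$ fixing every block has order equal to the block size, so the kernel of $\overline{\cL(n,q)}$ on $\mathcal{B}$ is a nontrivial normal subgroup. It therefore contains the socle, and the socle would be intransitive. That possibility is excluded by the classification the paper invokes.

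On the parity clause in (c), your suspicion is correct but your first argument is not. Because $\bar\sigma$ acts faithfully as an $n_1$-cycle, $\overline{\cL(n,q)}=A_{n_1}$ forces $n_1$ to be \emph{odd}. The clause as printed (``only when $n_1$ is even'') is reversed. Taken literally, it would assert that $A_{n_1}$ never occurs, which neither the paper's proof nor yours establishes. The intended statement is the usual entry in the list of almost simple groups with a regular cyclic subgroup ($A_{n_1}$ only for odd $n_1$), and it needs nothing beyond the parity of $\bar\sigma$.

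Your alternative idea fails: that normalizing considerations involving $\bar\psi$ force $S_{n_1}$ when $n_1$ is odd. $\bar\psi$ acts on $X\cong\Z_{n_1}$ as $i\mapsto qi$, which can be an even permutation. This happens, e.g., whenever $n_1$ is prime and $q$ is a square modulo $n_1$; for $n_1=7$, $q=2$ it is $(1\,2\,4)(3\,6\,5)$. So nothing prevents $\la\bar\sigma,\bar\psi\ra\le A_{n_1}$. Drop that argument and record the clause with ``odd''.
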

\begin{proof}
Since the center $Z$ of $\GL_m(q)$ acts trivially on $X$ and $\cL(n,q)$ stabilizes $\cU_{n,q}$, we have an induced action of $\overline{\cL(n,q)}$ on $X$. Let $K$ be the kernel of this induced action.  Suppose that $\bar{g}$ acts on $X$ trivially for some $g\in\cL(n,q)$. Then there are $\lambda_i$'s in $\F_q^*$ such that $g(\xi^i)=\lambda_i\xi^i$ for $0\le i\le m-1$, i.e., $g$ is a diagonal matrix with respect to the basis $\{1,\xi,\ldots,\xi^{m-1}\}$. It follows that $K$ is a solvable group. Since $K$ is normal in $\overline{\cL(n,q)}$, and $\overline{\cL(n,q)}$ is almost simple, we deduce that $K=\{1\}$. The claim on $\la\bar{\sigma}\ra$ is obvious. Now $\overline{\cL(n,q)}$, which is an almost simple, acts faithfully on $X$, and contains a regular cyclic subgroup; using \cite[Theorem 1.2]{LiPraeger2012}. the claims (a)-(d) follow. This completes the proof.
\end{proof}

We shall need the following technical lemmas.
\begin{lemma}\label{lem_LabsIrr}
Take notation as above, and let $Z$ be the center of $\GL_m(q)$.
\begin{enumerate}
\item[(1)]The group $\cL(n,q)$ is absolutely irreducible on $\F_{q^m}$, i.e., $C_{\GL(m,q)}(\cL(n,q))=Z$.
\item[(2)]If there is a field $E$ in $\textup{End}(V)$ such that $\cL(n,q)$ is $E$-semilinear, then $E\setminus\{0\}\subseteq\GL_1(q^m)$ and $\cL(n,q)$ is $E$-linear.
\end{enumerate}
\end{lemma}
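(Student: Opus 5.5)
Both parts reduce to one observation: $\cL(n,q)$ contains $\sigma=\rho_\xi$ with $\F_q(\xi)=\F_{q^m}$, so $\F_q[\sigma]$ is the full field $\{\rho_a:a\in\F_{q^m}\}$, a maximal subfield of $\textup{End}(V)$. Together with $\psi$, this pins down centralizers and semilinear structures.

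For (1), let $g\in\GL_m(q)$ centralize $\cL(n,q)$. Then $g$ commutes with $\sigma$, hence with $\F_q[\sigma]=\{\rho_a:a\in\F_{q^m}\}$. The centralizer of this maximal subfield is itself, by the double-centralizer theorem, or directly: $g(a)=g(\rho_a 1)=\rho_a g(1)=a\,g(1)$. So $g=\rho_b$ with $b=g(1)\in\F_{q^m}^*$. Commuting with $\psi$ gives $b x^q=(bx)^q=b^q x^q$ for all $x$, so $b^q=b$ and $b\in\F_q^*$, i.e.\ $g\in Z$. Absolute irreducibility then follows from Schur's lemma in the standard form: a module whose endomorphism ring is $\F_q$ is absolutely irreducible. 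That $\cL(n,q)$ is irreducible in the first place is already known, since $\cL_{\textup{st}}(n,q)$, and even $\la\sigma\ra$ by Lemma~\ref{lem_irre}, acts irreducibly. The same computation applied over $\overline{\F_q}$ gives $C=Z$ equally well.

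For (2), let $E\subseteq\textup{End}(V)$ be a field, say $E\cong\F_{q^e}$ containing the scalars, such that every $g\in\cL(n,q)$ normalizes $E$, acting on it by a field automorphism. First I take $g=\sigma$. Conjugation by $\sigma$ induces an automorphism $\tau$ of $E$ whose order divides $e$. Then $\sigma^e$ centralizes $E$, so $E$ commutes with $\sigma^e$. The crux is to get $E$ commuting with $\sigma$ itself. I would argue that $\la\sigma\ra$ normalizes $E$, so the group $E^*\la\sigma\ra$ has the cyclic normal subgroup $E^*$. Alternatively, $E\sigma$-invariance makes $V$ an $E[\sigma;\tau]$-module. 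A cleaner route is to note that the set $\{x\in E: x\sigma=\sigma x\}$ is the fixed field $E^{\tau}$, and $V$ is then a module over the algebra generated by $E$ and $\sigma$. That algebra is a cyclic algebra, and since $\sigma$ already generates the maximal commutative subalgebra $F:=\F_q[\sigma]$, there is a dimension count: $\dim_{\F_q}\la E,\sigma\ra\le \dim \textup{End}(V)=m^2$. I expect this to be the main obstacle.

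My first attempt would be the following. Since $F$ is normalized by... no; better to use the centralizer structure. $C_{\textup{End}(V)}(E)$ is $\textup{End}_E(V)\cong M_{m/e}(E)$, and $\sigma^e\in C(E)$. Also $C(E^\tau)\ni\sigma$. Consider $K=E^{\tau}$: both $E$ and $\sigma$ lie in $C(K)=M_{m/f}(K)$ with $f=[K:\F_q]$. Inside, $\F_q[\sigma]$ is a commutative $K$-subalgebra, so $K[\sigma]$ is a field of $\F_q$-degree $\le m$ containing $F$. Hence $K[\sigma]=F$ and $K\subseteq F$. In particular $K$ consists of $\rho_a$'s. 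Then $\sigma$ acts $K$-linearly, so $\sigma$ is a $K$-linear map with $K$-semilinear conjugation on $E$ of order $e/f$. If $e>f$, $\sigma$ would be a $\tau$-twisted map whose $e/f$-th power centralizes $E$. Since $\sigma$ acts irreducibly with minimal polynomial of degree $m$ over $\F_q$, one checks via eigenvalues that $\sigma$ permutes the $E$-eigenspaces of $V\otimes\overline{\F_q}$ nontrivially. But $\sigma$ is diagonalizable with distinct eigenvalues $\xi^{q^i}$, each of whose eigenlines must be $E$-stable up to the permutation, forcing $\tau=1$. Thus $\sigma$ commutes with $E$, so $E\subseteq C(\sigma)=F=\{\rho_a\}\cup\{0\}$, giving $E\setminus\{0\}\subseteq\GL_1(q^m)$.

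Finally, for general $g\in\cL(n,q)$, semilinearity over $E\subseteq F$ means $g$ induces $x\mapsto x^{q^j}$ on $E$. I would show $j\equiv0$ by applying the same eigenline argument to the subgroup generated by $\sigma$ and $g$. If that fails, I would fall back on whichever normal form $E$-semilinear groups take in the $\cC_3$ analysis. There, $E$-linearity is the claim: if some $g$ acted nontrivially on $E$, then, as $E\ni\rho_a$ and $g\rho_a g^{-1}=\rho_{a^{q^j}}$, we get $g\circ\psi^{-j}$ commuting with $E$. I would then need to use that this map lies in $\cL(n,q)$ only when $\psi^{j}$ preserves everything. I am least sure here, and would expect the statement to need the hypothesis that $E$ is normalized rather than linear, with the conclusion perhaps only up to replacing $g$ by $g\psi^{-j}$.
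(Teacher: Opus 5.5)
Your part (1) is correct and is essentially the paper's argument. Part (2) has a genuine gap at the decisive step: the claim that $\sigma$ by itself must centralize $E$, which you try to extract from the simple eigenvalues of $\sigma$. This claim is not merely unproved but false, so no eigenline argument can work.

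For instance, let $q$ be odd, $m=2$, and $\xi^2=d$ a non-square in $\F_q$. In the basis $\{1,\xi\}$ we have $\sigma=\left(\begin{smallmatrix}0&d\\ 1&0\end{smallmatrix}\right)$. The matrix $y=\left(\begin{smallmatrix}a&-dc\\ c&-a\end{smallmatrix}\right)$, with $a^2-dc^2$ a non-square, satisfies $y^2=(a^2-dc^2)I$ and $\sigma y\sigma^{-1}=-y=y^q$. So $\sigma$ normalizes the field $E=\F_q[y]\cong\F_{q^2}$ and induces the Frobenius on it, yet $E\setminus\{0\}\not\subseteq\GL_1(q^m)$.

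The phenomenon survives the standing reduction $\ord_{n_1}(q)=m$. Take $q=2$, $n=9$. Then $\sigma$ is $\F_4$-linear with $\sigma^3=\omega\in\F_4$, hence conjugate in $\GL_3(4)$ to $w\mapsto zw^4$ on $\F_{64}$, where $N_{\F_{64}/\F_4}(z)=\omega$. This map normalizes the copy $\{w\mapsto yw\}$ of $\F_{64}$ and induces $y\mapsto y^4$ on it.

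What goes wrong in your heuristic is that a $\sigma$-eigenvector need not lie in a single $E$-isotypic component of $V\otimes\overline{\F_q}$; it can be spread over a whole $\sigma$-orbit of components. So any proof must use more of $\cL(n,q)$ than $\sigma$. The paper's one-line proof (substituting $g=\sigma^i$, $x=\xi^{-i}$) does not supply this either. It uses a single automorphism $\tau$ for all $g$, and taking $g=1$ shows that this already amounts to assuming $E$-linearity.

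The missing ingredient is $\psi$.
\begin{enumerate}
\item[(i)] Write $\tau_g$ for the automorphism of $E$ induced by conjugation by $g\in\cL(n,q)$. Then $g\mapsto\tau_g$ is a homomorphism into the abelian group $\Aut(E)$.
\item[(ii)] Since $\psi\sigma\psi^{-1}=\sigma^q$, we get $\tau_\sigma^{q}=\tau_\sigma$. Hence $\sigma^{q-1}=\rho_{\xi^{q-1}}$ centralizes $E$.
\item[(iii)] The element $\xi^{q-1}$ has order $n_1=n/n_0$. By the reduction made right after Lemma~\ref{lem_ordbarpsi}, $\ord_{n_1}(q)=m$, so $\F_q(\xi^{q-1})=\F_{q^m}$.
\item[(iv)] Therefore every $f\in E$ commutes with all $\rho_a$, so $f=\rho_{f(1)}$ by your computation in (1). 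This gives $E\setminus\{0\}\subseteq\GL_1(q^m)$.
\end{enumerate}
The hypothesis on $n_1$ is genuinely used here. In the $m=2$ example with $a=0$ and $q\equiv 1\pmod 4$, $\psi=\mathrm{diag}(1,-1)$ also normalizes $E$, and there $n_1=2$.

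Finally, your doubt about the $E$-linearity assertion is justified. $\cL(n,q)$ is not $E$-linear as soon as $E\not\subseteq\F_q$, because $\psi\in\cL(n,q)$ conjugates $\rho_a$ to $\rho_{a^q}$. What is true, and all that Proposition~\ref{prop_C3} needs, is the following. Each $g$ induces some $a\mapsto a^{q^j}$ on $E=\{\rho_a:a\in\F_{q^b}\}$, so $g\psi^{-j}$ is $E$-linear and $\cL(n,q)\le\GL_{m/b}(q^b)\la\psi\ra$.
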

\begin{proof}
(1) We have $\F_q(\xi)=\F_{q^m}$ by the fact $\ord_n(q)=m$. By Lemma \ref{lem_irre}, $\cL(n,q)$ is irreducible on $\F_{q^m}$.  An element of $\GL_m(q)$ that commutes with $\la\sigma\ra$ is $\F_{q^m}$-linear, and we similarly deduce that $C_{\GL(m,q)}(\cL_{\textup{st}}(n,q))=Z$.  We thus have  $C_{\GL(m,q)}(\cL(n,q))=Z$, and so $\cL(n,q)$ is absolutely irreducible on $\F_{q^m}$ by \cite{Curtis-Rwiner} or \cite[Lemma 2.10.1]{KL1990}.

(2) Take $f\in E$. There is a field automorphism $\tau$ of $E$ such that $g(f(x))=f^{\tau}(g(x))$ for all $g\in \cL(n,q)$.
By specifying $g=\sigma^i$ and $x=\xi^{-i}$, we deduce that $\xi^{-i}f(\xi)=f^{\tau}(1)$, i.e., $f(\xi^i)=c\xi^i$ with $c=f^{\tau}(1)$ for each $i$. Since $f$ is $\F_q$-linear and $\F_q(\xi)=\F_{q^m}$, we deduce that $f(x)=cx$ for each $x\in\F_{q^m}$. This completes the proof.
\end{proof}

\begin{lemma}\label{lem_An1ClassS}
If $n_1\ge 5$ is a prime such that $\ord_{n_1}(q)=n_1-1$ and $n_0$ is a divisor of $q-1$, then the irreducible cyclic code $C$ of length $n=n_0n_1$ over $\F_q$ is obtained from Example \ref{exa_repetition} by applying Constructions \ref{exa_lift} and \ref{exa_ext}.
\end{lemma}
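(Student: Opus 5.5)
Let $p$ be the characteristic of $\F_q$ and write $q=p^f$. The plan is to go up the chain of constructions. Start from the code of Example~\ref{exa_repetition} over a prime field, then apply Construction~\ref{exa_lift} to lift it to $\F_q$. Finally apply Construction~\ref{exa_ext} with $u=n_0$ to pass from length $n_1$ to length $n_0n_1$. The remaining work is to check the arithmetic hypotheses of each step and to see that the resulting code is the given $C$. Since non-degenerate irreducible cyclic codes of a fixed length over a fixed field are all permutation equivalent, "the" code is determined by $(n,q)$.

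\textbf{Step 1 (base code).} From $\ord_{n_1}(q)=n_1-1$ we get $\gcd(n_1,q)=1$, so $p\neq n_1$. We also have $\ord_{n_1}(q)=\ord_{n_1}(p)/\gcd(\ord_{n_1}(p),f)$. Since $\ord_{n_1}(p)$ divides $n_1-1$ and the quotient must equal $n_1-1$, this forces $\ord_{n_1}(p)=n_1-1$ and $\gcd(n_1-1,f)=1$. Hence $p$ has order $n_1-1$ modulo the prime $n_1\ge 5$. So by Example~\ref{exa_repetition}, the dual of the repetition code of length $n_1$ over $\F_p$ is an NSIC code, with $m=n_1-1$.

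\textbf{Step 2 (lift).} Apply Construction~\ref{exa_lift} with $t=f$; this is allowed because $\gcd(n_1-1,f)=1$. We obtain an NSIC code of length $n_1$ over $\F_q$. By the uniqueness up to equivalence, this is the irreducible cyclic code of length $n_1$ over $\F_q$. If $n_0=1$, we are done.

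\textbf{Step 3 (extension).} We need $u=n_0$ to divide $(q-1)/\gcd(n_1,q-1)$. Since $\ord_{n_1}(q)=n_1-1\ge 4>1$, we have $n_1\nmid q-1$, so $\gcd(n_1,q-1)=1$. Then $u=n_0$ divides $q-1$ by hypothesis. Construction~\ref{exa_ext} then yields that the irreducible cyclic code of length $n_0n_1=n$ over $\F_q$ is non-standard, and by uniqueness it is $C$.

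The only delicate point is the consistency between the $n_0$ in the hypothesis and $\gcd(n,q-1)$. Because $\gcd(n_1,q-1)=1$ and $n_0\mid q-1$, we get $\gcd(n_0n_1,q-1)=n_0$, so the two agree. Beyond this, the argument is pure bookkeeping on orders.
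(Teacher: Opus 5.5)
Your proposal is correct and follows the paper's proof. You start from the dual repetition code of length $n_1$ over $\F_p$ (Example~\ref{exa_repetition}), lift it to $\F_q$ via Construction~\ref{exa_lift} with $t=f$ using $\ord_{n_1}(p)=n_1-1$ and $\gcd(f,n_1-1)=1$, and then apply Construction~\ref{exa_ext} with $u=n_0$. You also make explicit two points the paper leaves implicit: the identity $\ord_{n_1}(p^f)=\ord_{n_1}(p)/\gcd(\ord_{n_1}(p),f)$, and the fact $\gcd(n_1,q-1)=1$, which makes $u=n_0$ an admissible choice in Construction~\ref{exa_ext}.
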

\begin{proof}
Let $C'$ be the irreducible cyclic code of length $n_1$ over $\F_p$, where $q=p^f$ with $p$ prime. It is the dual of the repetition code of length $n_1$ over $\F_p$ as in Example \ref{exa_repetition}. Since $\ord_{n_1}(p^f)=n_1-1$, we have $\ord_{n_1}(p)=n_1-1$ and $\gcd(f,n_1-1)=1$. The code $C''=C'\otimes\F_q$ is an irreducible cyclic code of length $n_1$ over $\F_q$ by Construction \ref{exa_lift}. The code $C$ is obtained from $C''$ by applying Construction \ref{exa_ext}. This completes the proof.
\end{proof}

\begin{lemma}\label{lem_ordn1q}
If $d\ge 2$, $t\ge 1$ and $n_1=\frac{q^{dt}-1}{q^t-1}$, then $\ord_{n_1}(q)=dt$.
\end{lemma}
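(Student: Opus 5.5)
The plan is to pin down $e:=\ord_{n_1}(q)$ by showing $e\mid dt$ and then $e\ge dt$. Write $n_1=\frac{q^{dt}-1}{q^t-1}=1+q^t+q^{2t}+\cdots+q^{(d-1)t}$.

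For the first bound, $n_1$ divides $q^{dt}-1$ by definition, so $e\mid dt$ and in particular $e\le dt$. Note also that $\gcd(n_1,q)=1$, since $n_1\equiv 1\pmod q$, so the order is well defined.

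For the reverse inequality, suppose $n_1\mid q^e-1$ with $1\le e<dt$. Since $d\ge 2$, we have $n_1\ge 1+q^t>1$, so $e\ge 1$ forces $q^e-1\ge n_1$. Hence $q^e> n_1> q^{(d-1)t}$, which gives $e>(d-1)t$, so $(d-1)t<e<dt$. Write $e=(d-1)t+s$ with $0<s<t$.

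Reduce $q^e$ modulo $n_1$. Since $q^{dt}\equiv 1\pmod{n_1}$, the powers $q^{jt}$ for $0\le j\le d-1$ behave cyclically. We have $q^e=q^{(d-1)t}q^s$, and using
\[
q^{(d-1)t}\equiv -(1+q^t+\cdots+q^{(d-2)t})\pmod{n_1},
\]
this gives
\[
q^e-1\equiv -\big(q^s+q^{t+s}+\cdots+q^{(d-2)t+s}\big)-1\pmod{n_1}.
\]
The absolute value of the right-hand side is $1+q^s\frac{q^{(d-1)t}-1}{q^t-1}$, which is positive. It is also strictly less than $n_1=1+q^t\frac{q^{(d-1)t}-1}{q^t-1}$, because $s<t$ and $d\ge 2$. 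So the right-hand side lies strictly between $-n_1$ and $0$ and is not divisible by $n_1$, contradicting $n_1\mid q^e-1$.

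Therefore $e=dt$.

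The only step needing care is this last size comparison, i.e.\ that the residue lies strictly between $-n_1$ and $0$. It is immediate from $s<t$, and the argument requires $d\ge2$ for the sum defining it to be nonempty.
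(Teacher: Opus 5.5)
Your proof is correct, and its first half is the same as the paper's: from $n_1\mid q^e-1$ you get $q^{(d-1)t}<n_1\le q^e-1$, which forces $e>(d-1)t$.

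The two arguments differ in how they finish. The paper uses the divisibility $e\mid dt$, which you also noted. If $e<dt$, then $e$ is a proper divisor of $dt$, so $e\le dt/2$. Combined with $(d-1)t<e$, this gives $d<2$, a contradiction, in one line.

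You never use the divisibility. Instead you rule out every exponent in the window $(d-1)t<e<dt$ by reducing $q^e$ modulo $n_1$ and showing that $q^e-1$ is congruent to something strictly between $-n_1$ and $0$. That computation is valid. It also covers $t=1$ implicitly, since then the window contains no integer and the contradiction is immediate. It yields a bit more, namely the explicit residue of $q^e$ and the direct fact that $q^e\not\equiv 1 \pmod{n_1}$ for every $e<dt$.

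For the lemma itself, though, the residue computation is unnecessary once you have $e\mid dt$. Because $d\ge 2$, the largest proper divisor of $dt$ is at most $dt/2\le (d-1)t$, so no proper divisor of $dt$ lies in your window at all.
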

\begin{proof}
Let $d_1=\ord_{n_1}(q)$, which divides $dt$ since $q^{dt}\equiv 1\pmod{n_1}$. Suppose to the contrary that $d_1<dt$. Since $n_1$ divides $q^{d_1}-1$, we have $q^{(d-1)t}<n_1\le q^{d_1}-1$. It follows that $(d-1)t<d_1\le\frac{1}{2}dt$, i.e., $d<2$: a contradiction. This completes the proof.
\end{proof}

\begin{lemma}\label{lem_C6n3n0}
Suppose that $q$ is a prime, $m:=\ord_n(q)=2$, and  $n=n_1n_0$ with $n_0=\gcd(n,q-1)$ and $n_1\in\{3,4\}$.
\begin{itemize}
  \item[(1)]If $n_1=3$, then the irreducible cyclic code of length $n$ over $\F_q$ is standard.
  \item[(2)]If $n_1=4$ and $\cL(n,q)$ induces the full symmetry group $\textup{Sym}(X)$ on $X=\{\la\xi^i\ra_{\F_q}:0\le i\le 3\}$, then $q=3$ and the ternary irreducible cyclic code of length $8$ is non-standard and is covered by Example \ref{exa_whole}.
\end{itemize}
\end{lemma}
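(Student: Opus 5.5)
For both parts, the plan is to analyse the action of $\cL(n,q)$ on the set $X=\{\la\xi^i\ra_{\F_q}\}$, which has $|X|=n/n_0=n_1$ points. Since $m=2$, the set $X$ is a set of $n_1$ points of the projective line $\textup{PG}(1,q)$, and $\overline{\cL(n,q)}\le\PGL_2(q)$ acts on it. The key tool is that $\PGL_2(q)$ is sharply $3$-transitive on $\textup{PG}(1,q)$. Consequently, an element of $\GL_2(q)$ fixing three distinct points of $\textup{PG}(1,q)$ is a scalar.

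First I would determine the kernel of the action of $\cL(n,q)$ on $X$. Because $n_1\ge 3$, the paragraph above shows this kernel lies in $Z=\F_q^*$. A scalar $\lambda$ stabilizes $\cU_{n,q}$ exactly when $\lambda\in\cU_{n,q}\cap\F_q^*$, and this group has order $\gcd(n,q-1)=n_0$. Hence $|\cL(n,q)|=n_0\cdot|H|$, where $H\le\textup{Sym}(X)$ is the permutation group induced on $X$.

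For part (1), $n_1=3$ gives $|H|\le 6$. So $|\cL(n,q)|\le 6n_0=2n=nm=|\cL_{\textup{st}}(n,q)|$. Since $\cL_{\textup{st}}(n,q)\le\cL(n,q)$, equality holds and the code is standard.

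For part (2), the induced group $H$ is all of $\textup{Sym}(X)\cong S_4$, realized inside the setwise stabilizer in $\PGL_2(q)$ of a $4$-subset of $\textup{PG}(1,q)$. This is the step needing care, and I would handle it with cross-ratios. Put three of the points at $\infty,0,1$ and the fourth at $\lambda$. The stabilizer of the point $\infty$ must induce the full $S_3$ on $\{0,1,\lambda\}$. These maps send $\lambda$ to the values $1-\lambda$ and $1/\lambda$, and sharp $3$-transitivity forces the induced permutation of the cross-ratio to be trivial. So $\lambda=1-\lambda$ and $\lambda=1/\lambda$. The first gives $\lambda=1/2$ (for $q$ odd). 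The second gives $\lambda=\pm1$, and $\lambda\neq 1$ because the points are distinct, so $\lambda=-1$. Then $-1=1/2$, so $3=0$ and $q=3$. Alternatively, one can count: $|S_4|=24$ divides $|\PGL_2(q)|=q(q^2-1)$ and the orbit structure must fit, but the cross-ratio argument is cleaner.

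With $q=3$, we have $n_0\mid 2$ and $n=4n_0$. The case $n_0=1$ would make $n=4$ even, so $\gcd(4,2)=2\neq 1$, a contradiction. Hence $n_0=2$ and $n=8=3^2-1$. Then $\cU_{8,3}=\F_9^*$, and the code is exactly Example~\ref{exa_whole} with $(m,q)=(2,3)$; in particular it is non-standard.
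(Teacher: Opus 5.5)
Your proof is correct, and in both parts it argues differently from the paper.

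\textbf{Part (1).} The paper first derives $q\equiv 2\pmod 3$. It then uses the sharply $2$-transitive action of $\cL_{\textup{st}}(n,q)$ on $\la 1\ra_{\F_q},\la\omega\ra_{\F_q},\la\omega^2\ra_{\F_q}$, where $\omega$ has order $3$, to normalize $g\in\cL(n,q)$ so that $g(1)=1$ and $g(\omega)=\lambda\omega$. Finally it expands $(1+\lambda\omega)^3$ to force $\lambda=1$. You instead identify the kernel of the action on $X$: it is the scalars in $\cU_{n,q}\cap\F_q^*$, since an element of $\GL_2(q)$ fixing three projective points is scalar. Counting then gives $|\cL(n,q)|\le 6n_0=nm=|\cL_{\textup{st}}(n,q)|$. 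This is shorter and needs neither that $q$ is prime nor the congruence on $q$.

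\textbf{Part (2).} The paper takes an element inducing the $3$-cycle on $\la\xi\ra_{\F_q},\la\xi^2\ra_{\F_q},\la\xi^3\ra_{\F_q}$ and computes with the minimal polynomial $x^2+ax+b$ of $\xi$. From $\xi^4\in\F_q^*$ it gets $b=a^2/2$, then $\lambda=-a^{-1}$, then $\tfrac34 a^3=0$. You use sharp $3$-transitivity of $\PGL_2(q)$ to force both $\lambda=1/2$ and $\lambda=-1$, hence characteristic $3$. Your route shows conceptually why only $q=3$ survives: $\textup{PG}(1,3)$ has exactly four points and $\PGL_2(3)\cong S_4$. The paper's computation stays closer to the field arithmetic but is more ad hoc.

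\textbf{Two small presentation points.} First, the affine maps fixing $\infty$ do not literally send $\lambda$ to $1/\lambda$. The clean statement is this: the unique map realizing the transposition $(0\;1)$ is $x\mapsto 1-x$, which must fix $\lambda$. The unique map realizing $(1\;\lambda)$ is $x\mapsto\lambda x$, which must send $\lambda$ to $1$, so $\lambda^2=1$. Second, your counting aside carries no information, because $24\mid q(q^2-1)$ for every odd $q$. The cross-ratio argument is what actually does the work.
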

\begin{proof}
(1) Let $n_1=3$. By the assumptions,  $3$ divides $\frac{q-1}{n_0}(q+1)$ but not $\frac{q-1}{n_0}$, so $q\equiv 2\pmod{3}$. Let $\omega$ be an element of order $3$ in $\F_{q^2}\setminus\F_q$. We then  have $\cU_{n,q}=\cup_{i=0}^2\cU_{n_0,q}\omega^i$. The action of $\cL_{\textup{st}}(n,q)$ on $\{\la 1\ra_{\F_q},\la \omega\ra_{\F_q},\la \omega^2\ra_{\F_q}\}$ is sharply $2$-transitive upon direct check. Take an element $g\in\cL(n,q)$. By replacing $g$ by $hg$ for some $h\in\cL_{\textup{st}}(n,q)$ if necessary, we assume without loss of generality that $g(1)=1$, $g(\omega)=\lambda \omega$ for some $\lambda\in\cU_{n_0,q}$.
Since $\omega^2=-\omega-1$, we have $g(\omega^2)=-1-\lambda \omega$. It follows that $(1+\lambda\omega)^3\in\F_q^*$. After expansion, we obtain $(1+\lambda\omega)^3=(\lambda^3-3\lambda^2+1)+3\lambda(\lambda-1)\omega$, so $\lambda=1$. Therefore, $g$ is the identity map. The claim then follows.

(2) Let $n_1=4$ (so $q$ is odd). Take $g\in\cL(n,q)$ such that $\bar{g}$ corresponds to the $3$-cycle $(\la\xi\ra_{\F_q},\la\xi^2\ra_{\F_q},\la\xi^3\ra_{\F_q})$. By replacing $g$ with $\sigma^i g$ for some $i$ if necessary, we assume without loss of generality that $g(1)=1$. We have $g(\xi)=\lambda\xi^2$ for some $\lambda\in\F_q^*$. Let $x^2+ax+b$ be the minimal polynomial of $\xi$ over $\F_q$. We have $ab\ne 0$ by the fact $\xi^2\not\in\F_q^*$. Then $\xi^3=ab+(a^2-b)\xi$,  and $\xi^4=(2b-a^2)a\xi-b(a^2-b)\in\F_q^*$. The latter yields $b=\frac{1}{2}a^2$. We have $g(\xi^2)=-ag(\xi)-bg(1)=a^2\lambda\xi+(a\lambda-1)b\in\la\xi^3\ra_{\F_q}$, so $ab\cdot a^2\lambda=(a\lambda-1)b(a^2-b)$, i.e., $\lambda=-a^{-1}$. Similarly, $g(\xi^3)=\frac{1}{2}a^2\xi+\frac{3}{4}a^3\in\la\xi\ra_{\F_q}$, so we have $q=p=3$. It follows that $n=8$. The ternary irreducible cyclic code of length $8$ is covered by Example \ref{exa_whole}. This completes the proof.
\end{proof}

We shall complete the proof of Theorem \ref{thm_mainCode} after we establish Propositions \ref{prop_C3}-\ref{prop_C6}. In those propositions, we show that if $\cL(n,q)$ is a geometric subgroup of $\Delta(V,\kappa)$, then the code $C$ must arise as depicted in Theorem \ref{thm_mainCode}. This establishes our main theorem for the geometric subgroups. We will then show that Theorem \ref{thm_mainCode} holds for the Aschbacher class $\cS$ by examining the cases (a)-(d) in Lemma \ref{lem_C9faithful} in detail, and thus complete the proof.
\begin{proposition}\label{prop_C3}
Let $b$ be the largest integer such that there is a field $E\cong\F_{q^b}$ in ${\textup{End}(V)}$ such that $\cL(n,q)$ is $E$-semilinear, and assume that $b>1$. Then $C$ can be constructed from an NSIC code of length $n$ over $\F_{q^b}$ via Construction \ref{exa_expansion}.
\end{proposition}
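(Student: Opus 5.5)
By Lemma~\ref{lem_LabsIrr}(2), the semilinearity hypothesis upgrades to linearity. The plan is to identify $E$ with the subfield $\F_{q^b}$ of $\F_{q^m}$ and then recognise $\cL(n,q)$ as the linear group attached to a code over $\F_{q^b}$.

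\emph{Identifying $E$.} Lemma~\ref{lem_LabsIrr}(2) gives $E\setminus\{0\}\subseteq\GL_1(q^m)$ and says that $\cL(n,q)$ is $E$-linear. So $E=\{\rho_a : a\in F\}$, where $F=\{a\in\F_{q^m}:\rho_a\in E\}\cup\{0\}$. Since $E$ is closed under addition and composition, $F$ is a subfield of $\F_{q^m}$ of order $q^b$, so $F=\F_{q^b}$. In particular $b\mid m$, and $E$ is precisely multiplication by $\F_{q^b}$.

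\emph{Identifying $\cL(n,q)$.} Because every $g\in\cL(n,q)$ commutes with all $\rho_a$ for $a\in\F_{q^b}$, each such $g$ is an $\F_{q^b}$-linear invertible map of $\F_{q^m}$, i.e.\ an element of $\GL_{m/b}(q^b)$, and it stabilizes $\cU_{n,q}$. Note that $\cU_{n,q}=\cU_{n,q^b}$ as the unique subgroup of order $n$ in $\F_{q^m}^*$. Next I check that $\ord_n(q^b)=m/b$. We have $q^{b\cdot(m/b)}=q^m\equiv1\pmod n$. Conversely, if $n\mid q^{bj}-1$ then $m\mid bj$. So the smallest $j$ is $m/b$. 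It follows that
\[\cL(n,q)=\cL(n,q)\cap\GL_{m/b}(q^b)=\cL(n,q^b).\]
Here the second equality uses that $\cL(n,q^b)$ is the stabilizer of the same set $\cU_{n,q^b}=\cU_{n,q}$ inside $\GL_{m/b}(q^b)\subseteq\GL_m(q)$.

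\emph{Non-standardness over $\F_{q^b}$.} Let $C_0$ be the non-degenerate irreducible cyclic code of length $n$ over $\F_{q^b}$. As in the proof of Construction~\ref{exa_expansion},
\[\cL_{\rm st}(n,q^b)=\cL_{\rm st}(n,q)\cap\GL_{m/b}(q^b)\le \cL_{\rm st}(n,q).\]
Since $C$ is non-standard, $\cL(n,q^b)=\cL(n,q)\supsetneq\cL_{\rm st}(n,q)\supseteq\cL_{\rm st}(n,q^b)$, so $C_0$ is an NSIC code. Here $n\ge4$ holds by the standing assumption and $b$ divides $m$, so the hypotheses of Construction~\ref{exa_expansion} are met. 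Take $h_0$ to be the check polynomial of $C_0$ with zero $\beta^{-1}$, chosen as a generator of $\cU_{n,q}$. Its minimal polynomial over $\F_q$ is the check polynomial of a non-degenerate irreducible cyclic code of length $n$ over $\F_q$. By \cite[Proposition 5.19]{Hollmann2023}, any two such codes are permutation equivalent, so $C$ is (equivalent to) the code produced from $C_0$ by Construction~\ref{exa_expansion}.

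The one delicate point is the very first step. It relies on Lemma~\ref{lem_LabsIrr}(2) forcing $E$ into $\GL_1(q^m)$, and therefore into the unique subfield $\F_{q^b}$, rather than being some other copy of $\F_{q^b}$ in $\mathrm{End}(V)$. Once $E$ is known to act by scalars from $\F_{q^b}$, the rest is the order bookkeeping above. The maximality of $b$ is not needed; any $b>1$ works.
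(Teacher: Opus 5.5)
Your identification of $E$ with multiplication by the subfield $\F_{q^b}\subseteq\F_{q^m}$ is fine, and so is the check $\ord_n(q^b)=m/b$. The step $\cL(n,q)=\cL(n,q)\cap\GL_{m/b}(q^b)=\cL(n,q^b)$, however, is false whenever $b>1$.

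The Frobenius $\psi\in\cL_{\rm st}(n,q)\le\cL(n,q)$ satisfies $\psi\rho_a\psi^{-1}=\rho_{a^q}$. So it does not commute with $\rho_a$ for $a\in\F_{q^b}\setminus\F_q$, and it is not $\F_{q^b}$-linear. For the same reason, the ``$E$-linear'' clause of Lemma~\ref{lem_LabsIrr}(2) cannot be used literally. What actually holds is that $\cL(n,q)$ is $\F_{q^b}$-\emph{semilinear}.

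Your own chain exposes the contradiction. From $\cL(n,q^b)=\cL(n,q)\supsetneq\cL_{\rm st}(n,q)\ni\psi$ you would get $\psi\in\GL_{m/b}(q^b)$. Since your conclusion that $C_0$ is non-standard rests on this equality, that step does not go through as written.

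The repair, which is what the paper's proof does, uses only semilinearity together with the fact that $\psi$ already supplies all field automorphisms.
\begin{itemize}
\item Conjugation gives a homomorphism $g\mapsto\tau_g$ from $\cL(n,q)$ to $\mathrm{Gal}(\F_{q^b}/\F_q)$, defined by $g\rho_a g^{-1}=\rho_{\tau_g(a)}$.
\item Its kernel is $\cL(n,q)\cap\GL_{m/b}(q^b)=\cL(n,q^b)$, because $\cU_{n,q}=\cU_{n,q^b}$.
\item Since $\tau_\psi$ is the $q$-Frobenius, which generates this Galois group of order $b$, the map is onto. Hence $\cL(n,q)=\cL(n,q^b)\la\psi\ra$, with $\cL(n,q^b)$ of index $b$.
\item Similarly, $\cL_{\rm st}(n,q)\cap\GL_{m/b}(q^b)=\la\sigma,\psi^b\ra=\cL_{\rm st}(n,q^b)$, and so $\cL_{\rm st}(n,q)=\cL_{\rm st}(n,q^b)\la\psi\ra$.
\end{itemize}
If $\cL(n,q^b)=\cL_{\rm st}(n,q^b)$, these two decompositions give $\cL(n,q)=\cL_{\rm st}(n,q)$, contradicting the non-standardness of $C$. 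Therefore $C_0$ is an NSIC code over $\F_{q^b}$. Your final appeal to Construction~\ref{exa_expansion} and to permutation equivalence then applies unchanged.
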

\begin{proof}
By Lemma \ref{lem_LabsIrr}, $E$ is the unique subfield of size $q^b$ contained in $\GL_1(q^m)\cup\{0\}$, or equivalently, $\cL(n,q)$ is $\F_{q^b}$-linear for the unique subfield of size $q^b$ in $\F_{q^m}$, where $b|m$. Let $m_0=\ord_n(q^b)=\frac{m}{b}$, and let $\GL_{m_0}(q^b)$ be the $\F_{q^b}$-linear invertible transformations of $\F_{q^m}$. We have $\cU_{n,q}=\cU_{n,q^b}$, and $\GL_{m_0}(q^b)\le \GL_m(q)$. It follows from the definitions that $\cL(n,q^b)=\cL(n,q)\cap \GL_{m_0}(q^b)$. By the facts that elements of $\cL(n,q)$ are $\F_{q^b}$-linear and $E$-semilinear, we deduce that $\cL(n,q)\le \la\GL_{m_0}(q^b),\psi\ra$. Since $\cL(n,q)$ contains $\psi$, we have
\[
  \cL(n,q)=(\cL(n,q)\cap \GL_{m_0}(q^b))\la\psi\ra= \cL(n,q^b)\la\psi\ra.
\]
On the other hand, we have $\cL_{\textup{st}}(n,q^b)=\cL_{\textup{st}}(n,q)\cap \GL_{m_0}(q^b)$ and it contains $\la\psi\ra$.  Therefore,   $\cL(n,q^b)\ne \cL_{\textup{st}}(n,q^b)$, and the irreducible cyclic code $C$ of length $n$ over $\F_{q^b}$ is non-standard. This completes the proof.
\end{proof}

\begin{proposition}\label{prop_C5}
Suppose that there is an $\F_q$-basis $e_1,\ldots,e_m$ of $\F_{q^m}$ such that $\cL(n,q)$ stabilizes $\{\la v\ra_{\F_q}:v\in W\setminus\{0\}\}$, where $W=\la e_1,\ldots,e_m\ra_{\F_{q_0}}$ for a subfield $\F_{q_0}$ of $\F_q$. If $q=q_0^t$ with $t>1$, then $\ord_n(q_0)=m$, $\gcd(m,t)=1$; moreover, the irreducible cyclic code $C'$ of length $n'=\gcd(n,q_0^m-1)$ over $\F_{q_0}$ is non-standard, and $C$ is permutation equivalent to the code obtained from $C'$ by applying first Construction \ref{exa_lift} and then Construction \ref{exa_ext}.
\end{proposition}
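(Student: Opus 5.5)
The plan is to turn the invariant subgeometry into an $\F_{q_0}$-form of $V=\F_{q^m}$ adapted to $\xi$, and then read off the code $C'$ over $\F_{q_0}$.

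\textbf{Step 1: $\F_{q_0}$-rationality of $\sigma$.} The set-wise stabilizer in $\GL_m(q)$ of the point set $\{\la v\ra_{\F_q}:v\in W\setminus\{0\}\}$ is $Z\cdot\GL(W)$, where $\GL(W)\cong\GL_m(q_0)$ acts $\F_q$-linearly via the basis $e_1,\ldots,e_m$. So every $g\in\cL(n,q)$ can be written $g=\lambda_g g_0$ with $\lambda_g\in\F_q^*$ and $g_0(W)=W$.

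Apply this to $g=\sigma$. We get $\sigma=\lambda\sigma_0$, where $\sigma_0$ is multiplication by $\eta:=\lambda^{-1}\xi$ and $\eta W=W$. Hence the characteristic polynomial of $\rho_\eta$ has coefficients in $\F_{q_0}$. Since $\F_q(\eta)=\F_q(\xi)=\F_{q^m}$, the minimal polynomial of $\eta$ over $\F_q$ has degree $m$ and lies in $\F_{q_0}[x]$. It follows that $[\F_{q_0}(\eta):\F_{q_0}]=m$, and $\F_{q^m}$ is the compositum of $\F_q$ and $\F_{q_0^m}$. This forces $\gcd(m,t)=1$.

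Next, $W$ is a module over the field $\F_{q_0}[\eta]=\F_{q_0^m}$ of $\F_{q_0}$-dimension $m$, so $W=w\F_{q_0^m}$ for some $w$. Conjugating everything by $\rho_{w^{-1}}$, which commutes with $\sigma$, $\psi$ up to harmless adjustments, we may assume $W=\F_{q_0^m}$.

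\textbf{Step 2: arithmetic.} Now $\cU_{n,q}=\la\lambda\eta\ra\subseteq\F_q^*\cdot\F_{q_0^m}^*$, and $\F_q^*\cap\F_{q_0^m}^*=\F_{q_0}^*$ because $\gcd(m,t)=1$. Set $n'=\gcd(n,q_0^m-1)$, so that $\cU_{n',q_0}=\cU_{n,q}\cap\F_{q_0^m}^*$. I would show the decomposition
\[
\cU_{n,q}=\cU_{n',q_0}\cdot(\cU_{n,q}\cap\F_q^*)
\]
by projecting the cyclic group $\la\xi\ra$ onto the two factors of $\F_q^*\F_{q_0^m}^*/\F_{q_0}^*$.

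From this, $n=\lcm(n',n_0)$. Since $\eta\in\F_{q_0^m}$ has degree $m$, we get $\ord_{n'}(q_0)=m$. Combining this with the standing reduction $\ord_{n_1}(q)=m$ of Lemma~\ref{lem_ordbarpsi}, the claim $\ord_n(q_0)=m$ should follow. This also identifies $C$, up to permutation equivalence, with the code obtained from $C'$ (length $n'$ over $\F_{q_0}$) by Construction~\ref{exa_lift} (with the same $t$) followed by Construction~\ref{exa_ext} (with $u$ the factor coming from $\F_q^*$).

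\textbf{Step 3: $C'$ is non-standard.} For $g=\lambda_g g_0\in\cL(n,q)$, the map $g_0$ preserves $\F_{q_0^m}$. It maps $\cU_{n,q}$ into $\F_q^*\cU_{n,q}$, and intersecting with $\F_{q_0^m}$ shows that $g_0$ stabilizes $\cU_{n',q_0}$ up to an $\F_{q_0}^*$-scalar. After adjusting by that scalar, we get $g_0|_{\F_{q_0^m}}\in\cL(n',q_0)$.

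This defines a homomorphism $\cL(n,q)\to\cL(n',q_0)$ modulo scalars, whose kernel consists of scalars in $Z$. Under it, $\cLst(n,q)$ maps into $\cLst(n',q_0)$: $\sigma\mapsto\rho_\eta$, and $\psi$ maps to a power of Frobenius, using $\gcd(m,t)=1$. A Dedekind-law and order comparison, exactly as in the proofs of Constructions~\ref{exa_ext} and~\ref{exa_lift}, then gives $\cL(n',q_0)\neq\cLst(n',q_0)$, since $\cL(n,q)\neq\cLst(n,q)$.

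\textbf{Expected obstacle.} I expect the main difficulty to be Step 3 together with the decomposition in Step 2: controlling the scalar bookkeeping $\lambda_g\in\F_q^*$ versus $\F_{q_0}^*$, and showing the image of $\cL(n,q)$ genuinely exceeds $\cLst(n',q_0)$ rather than collapsing into it. I would first try to prove $\cL(n,q)=Z_{n}\cdot\widehat{\cL(n',q_0)}$ exactly, where $Z_n$ denotes the scalars preserving $\cU_{n,q}$ and the hat denotes $\F_q$-linear extension. That identity makes the order comparison immediate.
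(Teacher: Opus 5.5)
Your route is the paper's. You factor each element of $\cL(n,q)$ as an $\F_q^*$-scalar times an $\F_{q_0}$-rational map, apply this to $\sigma$ to get $\xi=\lambda\eta$ with $\eta\in\F_{q_0^m}$ of degree $m$, and aim at $\cL(n,q)=Z_1\cdot\widehat{\cL(n',q_0)}$, which is exactly the paper's claim $\cL(n,q)=\cL(n',q_0)Z_1$. Your explicit decomposition $\cU_{n,q}=\cU_{n',q_0}\cdot(\cU_{n,q}\cap\F_q^*)$ holds because $\F_q^*\F_{q_0^m}^*$ is cyclic of order $\lcm(q-1,q_0^m-1)$, so $n=\lcm(n_0,n')$. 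The paper needs this bookkeeping but skips it: its claim that the order of $\xi_1=a^{-1}\xi$ divides $n'$ holds only after choosing $a$ this way. Two steps, however, do not go through.

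\textbf{The claim $\ord_n(q_0)=m$ is false.} You say it ``should follow'' from $\ord_{n'}(q_0)=m$ and $\ord_{n/n_0}(q)=m$. It does not, and no argument can supply it. Since $n=\lcm(n_0,n')$ and $\ord_{n_0}(q_0)\mid t$ with $\gcd(m,t)=1$, one gets $\ord_n(q_0)=m\cdot\ord_{n_0}(q_0)$. This equals $m$ only when $n_0\mid q_0-1$. For a concrete case take $q=4$, $q_0=2$, $n=21$:
\begin{itemize}
\item $m=3$ and $\cU_{21,4}=\F_4^*\F_8^*$, so $\cL(21,4)$ stabilizes $\{\la v\ra_{\F_4}:v\in\F_8^*\}$; the hypothesis holds with $W=\F_8$.
\item $\cL(21,4)\supseteq\F_4^*\times\GL_3(2)$, so $C$ is non-standard.
\item $\ord_7(4)=\ord_7(2)=3$, yet $\ord_{21}(2)=6$.
\end{itemize}
The paper's proof only establishes $\ord_{n_1}(q_0)=m$ for $n_1$ the order of $\xi_1$, i.e.\ $\ord_{n'}(q_0)=m$. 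That is all Constructions~\ref{exa_lift} and~\ref{exa_ext} need, and it is what you should prove. Your Step~2 gives it once $\eta$ is chosen inside $\cU_{n',q_0}$ via your decomposition.

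\textbf{The reduction to $W=\F_{q_0^m}$ by conjugating with $\rho_{w^{-1}}$ is not harmless.} The conjugate $\rho_w^{-1}\cL(n,q)\rho_w$ is the stabilizer of the coset $w^{-1}\cU_{n,q}$, not of $\cU_{n,q}$. Also $\rho_w^{-1}\psi\rho_w=\rho_{w^{q-1}}\psi$. From $\psi$ preserving the subgeometry of $W$ you only learn $w^{q-1}\in\F_q^*\F_{q_0^m}^*$. That does not force $w\in\F_q^*\F_{q_0^m}^*$: take $q=4$, $q_0=2$, $m=3$ and $w$ of order $9$. Then $\la\sigma,\psi\ra$ (for $n=7$ or $21$) stabilizes the subgeometry $\{\la wy\ra_{\F_4}:y\in\F_8^*\}$, which is disjoint from $X$.

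Your Step~3 needs every $g_0$ to preserve $\F_{q_0^m}$ itself. So you must show that $\cL(n,q)$ stabilizes the particular subgeometry $\{\la y\ra_{\F_q}:y\in\F_{q_0^m}^*\}$ that contains $X$.
\begin{itemize}
\item When $n_0\mid q_0-1$ this is immediate. Then $\F_{q_0^m}=\F_{q_0}(\xi)$ is the $\F_{q_0}$-span of $\cU_{n,q}$, so every element of $\cL(n,q)$ preserves it.
\item In general that span is $\F_{q_0^{m\cdot\ord_{n_0}(q_0)}}$, and a real argument is needed.
\end{itemize}
The paper's proof makes the same identification without comment: it uses $g_0(1)\in\F_{q_0^m}$ for $g_0$ in the stabilizer of $W$. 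So on this point you are no worse than the paper, but you should not present the reduction as automatic.
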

\begin{proof}
Let $M$ be the stabilizer of $\{\la v\ra_{\F_q}:v\in W\setminus\{0\}\}$ in $\GL_m(q)$. Then $M=\GL_m(q_0)Z$, where $Z$ is the center of $\GL_m(q)$. The group $\cL(n,q)$ is the stabilizer of $\cU_{n,q}$ in $M$.

There is an element $a\in\F_q^*$ such that $\rho_a^{-1}\sigma$ lies in $\GL_m(q_0)$, i.e., it stabilizes $W$. Let $\xi_1=a^{-1}\xi$, and set $n_1$ be the multiplicative order of $\xi_1$. Let $f(x)$ be the minimal polynomial of $\xi_1$ over $\F_q$, and let $f_1(x)$ be the characteristic polynomial of the linear transformation $\rho_a^{-1}\sigma$. We have $\F_{q^m}=\F_{q}(\xi)=\F_q(\xi_1)$, so $\deg(f)=m$ and $\ord_{n_1}(q)=m$.  Since $\rho_a^{-1}\sigma$ stabilizes $W$, $f_1(x)$ is in $\F_{q_0}[x]$ and has degree $m$. We have $\rho_a^{-1}\sigma=\rho_{\xi_1}$, so $f(x)=f_1(x)\in\F_{q_0}[x]$ by the Cayley-Hamilton Theorem. It follows that $\ord_{n_1}(q_0)=m$ and $\F_{q_0}(\xi_1)=\F_{q_0^m}$. Since $\ord_{n_1}(q_0^t)=m$, we deduce that $\gcd(m,t)=1$.

Let $n'=\gcd(n,q_0^m-1)$, which is the size of $\cU_{n,q}\cap\F_{q_0^m}^*$. We have $n_1\mid n'$, and so $\ord_{n'}(q_0)=\ord_{q_0^t}=m$.  Also, we have $\cU_{n',q_0}=\cU_{n',q}$, and let $\xi'$ be a generator of this cyclic group.
We claim that $\cL(n, q)=\cL(n',q_0)Z_1$, where $Z_1=\la\sigma \ra\cap Z$. Take $g=\rho_ag_0\in\cL(n,q)$, where $a\in\F_q^*$ and $g_0\in\GL_m(q_0)$. We have $g(1)=ag_0(1)\in\cU_{n,q}$ and $g_0(1)\in \cU_{n',q_0}$, so $a\in\cU_{n,q}$. Since $a$ is in $\F_q^*$, we see that $\rho_a\in Z_1$. For $c\in\cU_{n',q_0}$, $g_0(c)=a^{-1}g(c)$ is
in both $\F_{q_0^m}^*$ and $\cU_{n,q}$, so it is in $\cU_{n',q_0}$. Hence, $g_0$ is in $\cL(n',q_0)$ as desired.

It is routine to check that $\cL_{\textup{st}}(n,q)=\cL_{\textup{st}}(n',q_0)Z_1$. Since $C$ is non-standard, we deduce that $\cL(n',q_0)\ne\cL_{\textup{st}}(n',q_0)$, so the irreducible cyclic code $C'$ of length $n'$ over $\F_{q_0}$ is non-standard. The code $C$ can be obtained from $C'$ by first applying Construction \ref{exa_lift} of length $n'$ and then Construction \ref{exa_ext}, up to permutation equivalence. This completes the proof.
\end{proof}

\begin{proposition}\label{prop_C2}
Suppose that $\cL(n,q)$ stabilizes an $m_0$-space decomposition
\begin{equation}\label{eqn_C2D}
  \mathcal{D}:\ \F_{q^m}=\oplus_{i=1}^{k} V_i,\; k\ge 2,\;\dim_{\F_q}(V_i)=m_0 \textup{ for }1\le i\le k.
\end{equation}
Then $m_0=\ord_{n'}(q)$, and $C$ is permutation equivalent to the code in Example \ref{exa_imp}.
\end{proposition}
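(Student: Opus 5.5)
The plan is to exploit the regular action of $\la\sigma\ra$ on $\cU_{n,q}$ and the transitivity of $\cL(n,q)$ on the blocks $V_i$. Since $\cL(n,q)$ is irreducible (Lemma \ref{lem_LabsIrr}), it permutes the $V_i$ transitively. The cyclic subgroup $\la\sigma\ra$ also acts on the set of blocks. First we show it acts transitively. Otherwise its orbits give a proper $\sigma$-invariant subspace $\oplus_{i\in O}V_i$, which contradicts Lemma \ref{lem_irre}, since $\la\sigma\ra$ is irreducible because $\F_q(\xi)=\F_{q^m}$. Therefore $\la\sigma\ra$ acts on the $k$ blocks as a regular cyclic group $C_k$. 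The stabilizer of $V_1$ in $\la\sigma\ra$ is $\la\sigma^k\ra$, and after relabelling we have $V_{i+1}=\xi^{i}V_1$.

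Next we locate the elements of $\cU_{n,q}$ inside the blocks. The key claim is that every $\xi^j$ lies in some $V_i$, i.e.\ $\cU_{n,q}\subseteq\bigcup_i V_i$. To prove it, note that the block stabilizer $H=\mathrm{Stab}_{\cL(n,q)}(V_1)$ contains $\la\sigma^k\ra$ and also contains elements moving things inside $V_1$. We translate $V_1$ by $\sigma^{-j}$ so that $1$ lies in the relevant position. Then we argue by counting. Since $\la\sigma\ra$ permutes blocks regularly, we have $\xi^k V_1=V_1$. If $1\in V_1$, then $\la\xi^k\ra\subseteq V_1$ and $\xi^iV_1=V_{i+1}$, so $\cU_{n,q}=\bigcup_i\xi^i\la\xi^k\ra$ lies in the union. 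To get $1$ into some block, I would first try to show that $V_1$ can be replaced by $\F_q(\xi^k)$. Since $V_1$ is $\xi^k$-invariant, it is a module over $\F_q(\xi^k)=\F_{q^{m'}}$, where $m'=\ord_{n'}(q)$ and $n'=n/k$ is the order of $\xi^k$. Hence $m'\mid m_0$. Conversely, $\F_{q^m}=\sum_{i<k}\xi^i\F_q(\xi^k)$ gives $m\le km'$, so $m_0\le m'$. Thus $m_0=m'$, and $V_1=c\,\F_{q^{m'}}$ is a one-dimensional $\F_{q^{m'}}$-space. Applying Lemma \ref{lem_ordqn}(3) as well, we obtain $m=\ord_{kn'}(q)=km'$ and $\F_{q^m}=\oplus_i\xi^i\F_{q^{m'}}$.

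At this point $(n,n',k)$ satisfies \eqref{eqn_nn0rCond}, with $m_0=\ord_{n'}(q)$, and $n\ge 6$ follows from Lemma \ref{lem_C2para} because $C$ is non-standard. So $C$ is the code of Example \ref{exa_imp}. One subtlety remains: $n'$ was defined as $n/k$, so we must check that $k\mid n$. This holds because $\la\sigma\ra$ acts regularly on the $k$ blocks, so its block kernel has index $k$ in $\la\sigma\ra$, which has order $n$.

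The main obstacle is the step showing that the decomposition is compatible with $\la\sigma\ra$, i.e.\ that the block kernel $\la\sigma^k\ra$ gives $V_1$ the structure of an $\F_q(\xi^k)$-space of the right dimension. This relies on the transitivity argument, which uses only irreducibility of $\la\sigma\ra$ and not the full group $\cL(n,q)$. If that argument gives only $m_0\ge m'$ together with a dimension count, the equality $m=km'$ then closes the gap.
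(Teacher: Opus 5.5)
Your argument is correct and is essentially the paper's proof: irreducibility of $\la\sigma\ra$ makes it transitive (hence regular) on the blocks, so $k\mid n$ and $\sigma^k$ fixes each $V_i$, and then $V_1$ is an $\F_q(\xi^k)$-space, so the dimension count ($m'\mid m_0$ together with $m=km_0\le km'$) gives $m_0=m'=\ord_{n'}(q)$ and $m=km'$, after which Lemma~\ref{lem_C2para} finishes. The paragraph on the ``key claim'' $\cU_{n,q}\subseteq\bigcup_i V_i$, including the idea of replacing $V_1$ by $\F_q(\xi^k)$, is neither proved nor needed, so you should simply drop it.
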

\begin{proof}
We have $m=m_0k$ by comparing dimensions. Let $X'=\{V_1,\ldots,V_k\}$. The direct sum of the $V_i$'s in a $\la\sigma\ra$-orbit on $X'$ is $\la\sigma\ra$-invariant, so $\la\sigma\ra$ is transitive on $X'$ by the fact that $\la\sigma\ra$ acts irreducibly on $V$. We assume without loss of generality that $V_i=\sigma^{i-1}(V_1)$ for $1\le i\le k$. It follows that (1) $\sigma^k$ stabilizes each $V_i$, (2) $r|n$ by the orbit-stabilizer lemma. Therefore, each $V_i$ is a vector space over $\F_q(\xi^k)=\F_{q^{e}}$, and further $1,\xi,\ldots,\xi^{k-1}$ are linearly independent over $\F_{q^{e}}$.
Since $\F_q(\xi)=\F_{q^m}$, we deduce that $\F_{q^m}=\oplus_{i=1}^{k}\F_{q^{e}}\xi^{i-1}$. By comparing dimensions, we deduce that $m=ek$ and $\dim_{\F_{q^{e}}}(V_i)=1$. Since $m=m_0k$ by \eqref{eqn_C2D}, we have $e=m_0$. Summing up, we have shown that the triple $(n,n',k)$ satisfies the conditions in \ref{eqn_nn0rCond}. By Lemma \ref{lem_C2para}, $C$ is permutation equivalent to the code in Example \ref{exa_imp}. This completes the proof.
\end{proof}

\begin{proposition}\label{prop_C4}
Suppose that $\cL(n,q)$ stabilizes a tensor product decomposition $V=V_1\otimes V_2$ and its induced action on $\{V_1,V_2\}$ is trivial, where $\dim_{\F_q}(V_i)=m_i$ for $i=1,2$ and $m_1\ge m_2\ge 2$. Then $\gcd(m_1,m_2)=1$, and $C$ can be obtained via Construction \ref{exa_product}.
\end{proposition}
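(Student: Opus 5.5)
The plan is to exploit the cyclic element $\sigma$, which lies in $\cL(n,q)$ and hence stabilizes the tensor decomposition $V=V_1\otimes V_2$ with trivial action on the factors. Such elements lie in $\GL(V_1)\circ\GL(V_2)$, so we can write $\sigma=\sigma^{(1)}\otimes\sigma^{(2)}$ with $\sigma^{(i)}\in\GL(V_i)$. The proof then has four steps: split $\sigma$ into commuting semisimple tensor factors, deduce coprimality of $m_1,m_2$ from irreducibility, show $\cL(n,q)$ carries an $\F_{q^{m_1}}$-structure, and finally match the data to Construction \ref{exa_product}.

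\emph{Step 1 (splitting $\sigma$).} Since $\sigma$ has order coprime to $q$, it is semisimple, and we may choose the factors $\sigma^{(1)},\sigma^{(2)}$ semisimple as well, after rescaling by scalars of $\F_q^*$. Because $\la\sigma\ra$ is irreducible on $V$ (Lemma \ref{lem_irre}), each $\sigma^{(i)}$ must be irreducible on $V_i$: a $\sigma^{(1)}$-invariant subspace $U_1$ gives a $\sigma$-invariant subspace $U_1\otimes V_2$. Hence $\F_q[\sigma^{(i)}]\cong\F_{q^{m_i}}$, and $\F_q[\sigma]$ embeds in $\F_q[\sigma^{(1)}]\otimes\F_q[\sigma^{(2)}]\cong\F_{q^{m_1}}\otimes_{\F_q}\F_{q^{m_2}}$.

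\emph{Step 2 (coprimality).} The tensor product $\F_{q^{m_1}}\otimes_{\F_q}\F_{q^{m_2}}$ is a product of $\gcd(m_1,m_2)$ copies of $\F_{q^{\lcm(m_1,m_2)}}$. On the other hand, $\F_q[\sigma]\cong\F_{q^m}$ is a field of degree $m=m_1m_2$. This forces $\lcm(m_1,m_2)=m_1m_2$, that is, $\gcd(m_1,m_2)=1$, and shows $\F_q[\sigma]=\F_q[\sigma^{(1)}]\otimes\F_q[\sigma^{(2)}]$. Consequently $\F_{q^{m_1}}$ and $\F_{q^{m_2}}$ sit inside $\F_{q^m}$ as the subfields $\F_q[\sigma^{(1)}\otimes 1]$ and $\F_q[1\otimes\sigma^{(2)}]$. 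Writing $\xi=\xi_1\xi_2$ with $\xi_i\in\F_{q^{m_i}}$ the eigenvalue data of $\sigma^{(i)}$ (up to $\F_q^*$), we set $n_i=o(\xi_i)$.

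\emph{Step 3 (semilinearity and reduction).} Every element of $\cL(n,q)$ has the form $g_1\otimes g_2$, so it normalizes the algebra $\mathrm{End}(V_1)\otimes 1$. I expect this to be the main obstacle: we must show that $\cL(n,q)$ is semilinear over the subfield $E=\F_{q^{m_2}}$ acting through $1\otimes\mathrm{End}(V_2)$, or at least that $g_2$ normalizes $\F_q[\sigma^{(2)}]$. If that held, Lemma \ref{lem_LabsIrr}(2) together with Proposition \ref{prop_C3} would reduce us to the $\cC_3$ situation. My first attempt would use the conjugation action on $\la\sigma\ra$-invariant data: $g\sigma g^{-1}$ need not lie in $\la\sigma\ra$, so instead I will look at the subgroup $\cL(n,q)\cap(\GL(V_1)\otimes 1)$ and at the stabilizer of the set $\cU_{n,q}$ intersected with $\xi_1^{\Z}\F_{q^{m_2}}^*$. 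Concretely, consider the factor on which $\cL(n,q)$ acts non-standardly (say $V_1$, up to relabelling). Since $\gcd(m_1,m_2)=1$, we have $\ord_{n_1}(q^{m_2})=m_1$, and $\cU_{n,q}=\cU_{n_1',q}\cU_{s,q}$ with $\cU_{s,q}\subseteq\F_{q^{m_2}}$ and $t=\ord_s(q)=m_2>1$.

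\emph{Step 4 (matching Construction \ref{exa_product}).} With $n'=\gcd(n,q^{m_1}-1)$ we have $n=\lcm(n',s)$. Restricting an element $g_1\otimes g_2$ to the slice $\F_{q^{m_1}}\cdot v$, for a fixed $v$ in the $V_2$-factor fixed up to scalar, gives an element of $\cL(n',q)$. Mirroring the order comparison in the proof of Construction \ref{exa_product}, $\cL(n,q)\neq\cLst(n,q)$ forces $\cL(n',q)\neq\cLst(n',q)$. Hence the code of length $n'$ over $\F_q$ is NSIC, and $C$ arises from it via Construction \ref{exa_product} with this $s$ and $t=m_2$. If neither factor is non-standard, the order comparison yields $\cL(n,q)=\cLst(n,q)$, a contradiction.
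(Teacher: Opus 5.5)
Your Steps 1 and 2 are sound, and your argument for $\gcd(m_1,m_2)=1$ is cleaner than the paper's divisibility route. The field $\F_q[\sigma]\cong\F_{q^m}$, of dimension $m_1m_2$, sits inside $\F_q[\sigma^{(1)}]\otimes\F_q[\sigma^{(2)}]\cong\F_{q^{\lcm(m_1,m_2)}}^{\gcd(m_1,m_2)}$. This forces coprimality and identifies $\sigma^{(1)}\otimes 1$ and $1\otimes\sigma^{(2)}$ with multiplications by elements of $\F_{q^{m_1}}$ and $\F_{q^{m_2}}$. The gap is in Steps 3--4, which is where the real content of the proposition lies. Step 3 should be dropped. $\cL(n,q)$ is not $\F_{q^{m_2}}$-semilinear in general: when the group on the $V_2$-factor is all of $\GL_{m_2}(q)$ with $(m_2,q)\neq(2,2)$, $g_2$ does not normalize $\F_q[\sigma^{(2)}]$. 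Even when $\cL(n,q)$ is $\F_{q^{m_2}}$-semilinear, Proposition \ref{prop_C3} yields Construction \ref{exa_expansion}, not Construction \ref{exa_product}. Also, ``the factor on which $\cL(n,q)$ acts non-standardly'' presupposes that $\cL(n,q)$ splits into groups acting on the two factors, which is exactly what must be proved.

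Step 4 is missing two things.

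First, normalizing the decomposition. Step 2 only gives $V_1\otimes v_2=\F_{q^{m_1}}\eta$ and $v_1\otimes V_2=\F_{q^{m_2}}\eta$ with $\eta=v_1\otimes v_2$, so the nonzero pure tensors form the coset $H\eta$, where $H=\F_{q^{m_1}}^*\F_{q^{m_2}}^*$. Since $\cU_{n,q}\subseteq H$, if $\eta\notin H$ then no slice $V_1\otimes w$ meets $\cU_{n,q}$, and restricting to slices gives nothing. Such decompositions are still $\sigma$-invariant, so this case cannot be excluded using $\sigma$ alone. The paper uses $\psi$ here. Writing $\psi=g_1\otimes g_2$ gives $f_1(\alpha)f_2(\beta)\eta=(\alpha\beta\eta)^q$, hence $\eta^{q-1}=f_1(1)f_2(1)\in H$. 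Since $[\F_{q^m}^*:H]=\frac{(q^m-1)(q-1)}{(q^{m_1}-1)(q^{m_2}-1)}\equiv 1\pmod{q-1}$, this forces $\eta\in H$, so one may take $\eta=1$.

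Second, splitting $\cL(n,q)$. Your restriction to a slice through a $v$ ``fixed up to scalar'' does not exist for a general $g_2$. What is needed is this: for each $g=g_1\otimes g_2\in\cL(n,q)$, a single $\lambda\in\F_q^*$ makes $\lambda g_1$ stabilize $\cU_{n_1,q}$ and $\lambda^{-1}g_2$ stabilize $\cU_{n_2,q}$, where $n_i=\gcd(n,q^{m_i}-1)$. This follows from $g(\alpha\beta)=f_1(\alpha)f_2(\beta)\in\cU_{n,q}=\cU_{n_1,q}\cU_{n_2,q}$ together with $\F_{q^{m_1}}^*\cap\F_{q^{m_2}}^*=\F_q^*$. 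It gives $\cL(n,q)\le\cL(n_1,q)\circ\cL(n_2,q)$. Comparing with $\cLst(n,q)=\cLst(n_1,q)\circ\cLst(n_2,q)$, which uses $\gcd(m_1,m_2)=1$, then shows that one factor is non-standard. The order comparison in the proof of Construction \ref{exa_product} only supplies the reverse containment, so it cannot replace this step.
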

\begin{proof}
We have $m=m_1m_2$ by comparing dimensions and $\cL(n,q)\le\GL(V_1)\circ\GL(V_2)$. There exist $g_1\in\GL(V_1)$ and $g_2\in\GL(V_2)$ such that $\sigma=g_1\circ g_2$. Since $\la \sigma\ra$ is irreducible on $\F_{q^m}$, we deduce that $\la g_i\ra$ is irreducible on $V_i$ for each $i$. The irreducible $\GL(V_1)$-subspaces of $V$ are the $(V_1\otimes u)$'s with $u\in V_2\setminus\{0\}$ by \cite[Lemma 4.4.3]{KL1990}, and $\GL(V_1)$ is the stabilizer in $\GL(V)$ of all those subspaces. We have similar conclusions for $\GL(V_2)$.

We claim that $\gcd(m_1,m_2)=1$. By Lemma \ref{lem_irre} and the fact $\la g_i\ra$ is irreducible on $V_i$, $o(g_i)$ divides $q^{m_i}-1$ for each $i$. It follows that $n=o(\sigma)$ divides
\begin{equation}\label{eqn_C4Nlcm}
N:=\lcm\left(q^{m_1}-1,q^{m_2}-1\right)=\frac{1}{q^{\gcd(m_1,m_2)}-1}(q^{m_1}-1)(q^{m_2}-1).
\end{equation}
In particular, $n$ divides $q^{\lcm(m_1,m_2)}-1$. Since $\ord_n(q)=m$ and $m=m_1m_2$, we deduce that $m_1m_2$ divides $\lcm(m_1,m_2)$, hence $\gcd(m_1,m_2)=1$.

Let $n_0=\gcd(n,q-1)$, and for $i=1,2$ let $n_i=\gcd(n,q^{m_i}-1)$, $d_i=n/n_i$, $d_i'=(q^{m_i}-1)/(q-1)$, $\xi_i=\xi^{d_i}$. We have $\gcd(d_1',d_2')=1$ from $\gcd(m_1,m_2)=1$. Also, $\gcd(n_1,n_2)=n_0$, $\gcd(n_i,q-1)=n_0$, and
\begin{equation}\label{eqn_C4nin0}
 n_i/n_0=\gcd(n/n_0,d_i'(q-1)/n_0)=\gcd(n/n_0,d_i').
\end{equation}
We deduce from $n\mid N$ that $n$ divides $n_1n_2/n_0$, cf. \eqref{eqn_C4Nlcm}, and it is now elementary to deduce that $n=n_1n_2/n_0$, i.e., $n=\lcm(n_1,n_2)$. It follows that $\gcd(d_1,d_2)=\frac{nn_0}{n_1n_2}=1$.

We claim that $\ord_{n_i}(q)=m_i$ for $i=1,2$. We have $\ord_{n_i}(q)\le m_i$ by the fact $n_i\mid (q^{m_i}-1)$, and the claim follows from
\begin{equation}\label{eqn_C4ordnq}
  m=\ord_n(q)=\lcm(\ord_{n_1}(q),\ord_{n_2}(q))\le\ord_{n_1}(q)\ord_{n_2}(q)\le m_1m_2.
\end{equation}
As a corollary, we have $\F_{q}(\xi_i)=\F_{q^{m_i}}$ and $\F_{q^{m_i}}\cap\cU_{n,q}=\cU_{n_i,q}$.

Let $\sigma_i=\sigma^{d_i}$ for $i=1,2$. We have $o(\sigma^{d_1})=n_1$, and $o(g_2^{d_1})$ divides $\gcd(n_1,q^{m_2}-1)$. The latter number is a divisor of $q-1$, so $g_2^{d_1}$ is in the center of $\GL(V_2)$. It follows that $\sigma_1$ is in $\GL(V_1)$. Similarly, $\sigma_2$ is in $\GL(V_2)$. There exist integers $a,b$ such that $ad_1+bd_2=1$ by the fact $\gcd(d_1,d_2)=1$. We have $\sigma=\sigma_1^a\sigma_2^b$, so we assume without loss of generality that $g_1=\sigma_1^a$ and $g_2=\sigma_2^b$. Since $\la g_1\ra$ is irreducible on $V_1$, so is $\sigma_1$ on $V_1$. Since  $\F_{q^{m_1}}=\F_q(\xi_1)$, the irreducible $g_1$-subspaces are the $1$-dimensional $\F_{q^{m_1}}$-subspaces of $V$. The $(V_1\otimes u)$'s with $u\in V_2\setminus\{0\}$ are $g_1$-invariant and have dimension $m_1$, so must be $1$-dimensional $\F_{q^{m_1}}$-subspaces. Similarly, each $v\otimes V_2$'s with $v\in V_1\setminus\{0\}$ are $1$-dimensional $\F_{q^{m_1}}$-subspaces.

Take a nonzero element $v_i$ in $V_i$ for each $i$, and let $\eta$ be the element of $V=\F_{q^m}$ such that $\eta=v_1\otimes v_2$. Then $V_1\otimes v_2=\F_{q^{m_1}}\eta$, $v_1\otimes V_2=\F_{q^{m_2}}\eta$, and we have the following facts: $\{\xi_1^{j}v_1:0\le j\le m_1-1\}$ is a basis of $V_1$, and
\begin{equation}\label{eqn_C4V1otime}
\{V_1\otimes u: u\in V_2\setminus\{0\}\} =\{\F_{q^{m_1}}\alpha\eta:\alpha\in\F_{q^{m_2}}^*\}.
\end{equation}
Let $\GL_{m_1}(q)$ be the set of invertible $\F_q$-linear transformations of $\F_{q^{m_1}}$. For each $f\in\GL_{m_1}(q)$, we define $h_f^{(1)}\in\GL(V)$ such that $h_f^{(1)}(x)=\sum_{i=0}^{m_1-1}\sum_{j=0}^{m_2-1}a_{ij}f(\xi_1^i)\xi_2^j\eta$,
where $x=\sum_{i=0}^{m_1-1}\sum_{j=0}^{m_2-1}a_{ij}\xi_1^i\xi_2^j\eta$ with $a_{ij}\in\F_{q}$. It is $\F_{q^{m_2}}$-linear. Since $\GL(V_1)$ is the stabilizer of all elements in one side of \eqref{eqn_C4V1otime}, we deduce that $\GL(V_1)=\{h_f^{(1)}:f\in\GL_{m_1}(q)\}$. There is a similar conclusion for $\GL(V_2)$.

Since $\psi$ is in $\GL(V_1)\circ\GL(V_2)$, there are $f_1\in\GL_{m_1}(q)$, $f_2\in\GL_{m_2}(q)$ such that
\[
\sum_{i=0}^{m_1-1}\sum_{j=0}^{m_2-1}a_{ij}f_1(\xi_1^i)f_2(\xi_2^j)\eta
=\sum_{i=0}^{m_1-1}\sum_{j=0}^{m_2-1}a_{ij}\xi_1^{qi}\xi_2^{qj}\eta^q
\]
for all $a_{ij}\in\F_q$. It follows that $f_1(\xi_1^i)f_2(\xi_2^j)=\xi_1^{qi}\xi_2^{qj}\eta^{q-1}$ for $0\le i\le m_1-1$ and $0\le j\le m_2-1$. We deduce that $f_1(\xi_1^i)=\lambda_1 \xi_1^{qi}$, $f_2(\xi_2^j)=\lambda_2 \xi_2^{qj}$ for some $\lambda_i\in\F_{q^{m_i}}^*$ such that $\lambda_1\lambda_2=\eta^{q-1}$.  It follows that $o(\eta)$ divides $s:=(q^m-1,(q^{m_1}-1)(q^{m_2}-1))$. Let $F(x)=\frac{(x^m-1)(x-1)}{(x^{m_1}-1)(x^{m_2}-1)}$ which is a polynomial in $\mathbb{Z}[x]$. Since $\lim_{x\to 1}F(x)=1$, we deduce that $F(q)\equiv 1\pmod{q-1}$, so 
\[s=\frac{1}{q-1}(q^{m_1}-1)(q^{m_2}-1)=\lcm(q^{m_1}-1,q^{m_2}-1).\] 
Therefore, there is $\alpha_i\in\F_{q^{m_i}}^*$ for $i=1,2$ such that $\eta=\alpha_1\beta_1$. By choosing $v_1,v_2$ properly, we thus assume without loss of generality that $\eta=1$.

We have $\cU_{n,q}=\cU_{n_1,q}\cdot\cU_{n_2,q}$ by the fact $n=\lcm(n_1,n_2)$. Let $\cL(n_i,q)$ be the stabilizer of $\cU_{n_i,q}$ in $\GL(V_i)$ for $i=1,2$, and let $\cL_{\textup{st}}(n_i,q)$ be the corresponding subgroup as conventionally defined. It is elementary to show that $\cL_{\textup{st}}(n,q)=\cL_{\textup{st}}(n_1,q)\circ\cL_{\textup{st}}(n_2,q)$. We claim that  $\cL(n,q)=\cL(n_1,q)\circ\cL(n_2,q)$. The reverse inclusion holds by our description of $\GL(V_1)$ and $\GL(V_2)$. Take $h\in\cL(n,q)$. There are $f_i\in\GL_{m_i}(q)$'s for $i=1,2$ such that $h=h_{f_1}^{(1)}\circ h_{f_2}^{(2)}$. For integers $k,l$, we have $h(\xi_1^k\xi_2^l) =f_1(\xi_1^{k})f_2(\xi_2^{l})\in\cU_{n,q}$. By specifying $k=0$ or $l=0$ respectively, we deduce that there is $\lambda\in\F_q^*$ such that $h_{\lambda f_1}^{(1)}\in\cL_(n_1,q)$, and $h_{\lambda^{-1}f_2}^{(2)}\in\cL_(n_2,q)$. We have $h=h_{\lambda f_1}^{(1)}\circ h_{\lambda^{-1}f_2}^{(2)}$, and this establishes the claim.

Since $C$ is non-standard, at least one of $\cL_{\textup{st}}(n_1,q)\ne\cL(n_1,q)$, $\cL_{\textup{st}}(n_2,q)\ne\cL(n_2,q)$ must hold. Hence $C$ can be constructed via Construction \ref{exa_product}. This completes the proof.
\end{proof}

\begin{proposition}\label{prop_C7}
Suppose that $\cL(n,q)$ stabilizes a tensor decomposition $\F_{q^m}=V_1\otimes\cdots\otimes V_t$, where $\dim(V_i)={m_0}\ge 2$ for each $i$, $t\ge 2$. Then $m_0=t=2$, the triple $(n,n/2,2)$ satisfies
the conditions in \eqref{eqn_nn0rCond} and $C$ appears in Example \ref{exa_imp}.
\end{proposition}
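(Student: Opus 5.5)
The plan is to use the cyclic element $\sigma$, which acts irreducibly, together with the structure of the $\cC_7$ stabilizer $(\GL(V_1)\circ\cdots\circ\GL(V_t))\rtimes S_t$. Let $K$ be the kernel of the action of $\cL(n,q)$ on the set $\{V_1,\ldots,V_t\}$ of tensor factors. Then $\cL(n,q)/K$ embeds in $S_t$, and the image of $\la\sigma\ra$ is cyclic.

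First, suppose $\sigma\in K$, so that $\sigma=g_1\circ\cdots\circ g_t$ with $g_i\in\GL(V_i)$. The argument in the proof of Proposition \ref{prop_C4} then applies verbatim: each $\la g_i\ra$ is irreducible on $V_i$, so $o(g_i)\mid q^{m_0}-1$ by Lemma \ref{lem_irre}. Hence $n\mid q^{m_0}-1$. But $\ord_n(q)=m=m_0^t>m_0$, which is a contradiction. So $\sigma$ induces a nontrivial permutation of the factors.

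Let $s>1$ be the length of an orbit of $\bar\sigma$ on the factors, and let $u$ be the order of the induced permutation. Then $\sigma^u\in K$. I would restrict to the sub-tensor product spanned by one $\la\sigma\ra$-orbit of factors. Grouping the factors in that orbit gives a decomposition $V=W_1\otimes W_2$, which is either a $\cC_4$-type decomposition stabilized by $\sigma$ or all of $V$. I expect to show that the orbit is all of $V$ by the Proposition \ref{prop_C4} coprimality argument, since $\gcd(m_0^a,m_0^b)>1$. So $\la\sigma\rangle$ cyclically permutes $V_1,\ldots,V_t$, and we may assume $u=t$.

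Next, $\sigma^t=h_1\circ\cdots\circ h_t$ with the $h_i$ conjugate to each other under $\sigma$. Then $\la\sigma^t\ra$ has order $n/\gcd(n,t)$ up to scalars, and its irreducible constituents have dimension dividing $m_0$. Arguing as above, $o(\sigma^t)$ divides $\lcm_i$ of orders in $\GL_{m_0}(q)$, so $n\mid t\cdot(q^{m_0}-1)\cdot(\text{scalar factor})$, roughly $n\mid t(q^{m_0}-1)$. Lemma \ref{lem_ordqn}(3) then gives $m_0^t=m=\ord_n(q)\le t\cdot\ord_{n/t}(q)\le t m_0$. This forces $m_0^{t-1}\le t$, so $m_0=t=2$ and $m=4$, with every inequality tight. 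Tightness yields $\ord_{n/2}(q)=2$ and $\ord_n(q)=2\cdot\ord_{n/2}(q)$. Hence the triple $(n,n/2,2)$ satisfies \eqref{eqn_nn0rCond}, and Lemma \ref{lem_C2para} (Example \ref{exa_imp}) identifies $C$.

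The main obstacle is making the divisibility $n\mid t\cdot(\text{something with order }\le m_0)$ precise. The issue is that $\sigma^t$ lies in $\GL(V_1)\circ\cdots\circ\GL(V_t)$ only modulo scalars, and one must control $\ord_{n/t}(q)$ rather than merely bound orders of elements. To get past this, I would first show that $\sigma^t$ acts on $V$ with minimal polynomial degree at most $m_0$ over $\F_q$. Its eigenvalues are products of eigenvalues of the $h_i$, which all lie in $\F_{q^{m_0}}$ since the $h_i$ are conjugate. So $\xi^t\in\F_{q^{m_0}}$, which gives $\ord_{n/\gcd(n,t)}(q)\le m_0$ directly, and then Lemma \ref{lem_ordqn}(3) applies.
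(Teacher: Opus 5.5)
Your overall route is the paper's. You show that $\la\sigma\ra$ is transitive on $\{V_1,\ldots,V_t\}$ via the coprimality argument of Proposition~\ref{prop_C4}, write $\sigma^t=h_1\circ\cdots\circ h_t$ with conjugate $h_i$, and use Lemma~\ref{lem_ordqn}(3) to force $m_0^t\le tm_0$. However, the step that carries the whole proposition is not established: that $\ord_{n/\gcd(n,t)}(q)\le m_0$, i.e.\ that $o(\sigma^t)$ is controlled by $q^{m_0}-1$. You justify it by saying the eigenvalues of the $h_i$ lie in $\F_{q^{m_0}}$ ``since the $h_i$ are conjugate''. That is a non sequitur: conjugacy only says the $h_i$ share a spectrum, not where the spectrum lives. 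A semisimple element of $\GL_{m_0}(q)$ has eigenvalues in $\F_{q^L}$, where $L$ is the lcm of the dimensions $d_j$ of its irreducible constituents. For example, with $m_0=5$ and constituents of dimensions $2$ and $3$ you get $L=6$, and the eigenvalues are not in $\F_{q^5}$. Products of eigenvalues from different constituents can also have degree $L$. The same problem affects ``$o(\sigma^t)$ divides an lcm of orders in $\GL_{m_0}(q)$, so roughly $n\mid t(q^{m_0}-1)$''. Without further input the only honest bound is $m_0^t\le t\cdot\lcm(d_j)$, and this rules out nothing in general. For instance, $t=2$, $m_0=19=3+4+5+7$ gives $\lcm=420$, and $361\le 840$.

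The missing ingredient must use the irreducibility of $\sigma$ on $V$ once more. The paper does this by proving that $\la h_1\ra$ is irreducible on $U$. The element $h_1$ is semisimple, so if it were reducible, a minimal $h_1$-invariant subspace $W$ would have dimension $m_1\le m_0/2$. Transporting $W$ along the $g_i$ gives a $\sigma^t$-invariant subspace $W\otimes g_2W\otimes g_3g_2W\otimes\cdots$ of dimension $m_1^t$. The sum of its images under $1,\sigma,\ldots,\sigma^{t-1}$ is then a nonzero $\sigma$-invariant subspace of dimension at most $tm_1^t\le tm_0^t/2^t<m_0^t$, a contradiction. Lemma~\ref{lem_irre} then gives $o(h_i)\mid q^{m_0}-1$, hence $o(\sigma^t)\mid q^{m_0}-1$ and $n\mid t(q^{m_0}-1)$.

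Alternatively, you can rescue your eigenvalue idea by using a single eigenvalue diagonally. Suppose $h_1v=\lambda v$ with $v\in U\otimes\overline{\F_q}$. Since $h_i=g_ih_{i-1}g_i^{-1}$, the vector $v\otimes g_2v\otimes g_3g_2v\otimes\cdots$ is an eigenvector of $\sigma^t$ with eigenvalue $\lambda^t$. Because $\sigma^t$ is multiplication by $\xi^t$, $\lambda^t$ is a Galois conjugate of $\xi^t$. Hence $\ord_{n/\gcd(n,t)}(q)=[\F_q(\xi^t):\F_q]\le[\F_q(\lambda):\F_q]\le m_0$. With either repair, the rest of your argument goes through, including the tightness analysis that gives $n$ even and $\ord_{n/2}(q)=2$.
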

\begin{proof}
We identify $V_i$ with the same vector space $U$. In this case, $\cL(n,q)$ is a subgroup of $(\GL_{m_0}{q}\circ\cdots\circ\GL_{m_0}(q))\wr S_t$, where $\pi\in S_t$ and $g_1\circ\cdots\circ g_t$ maps $v_1\otimes\cdots\otimes v_t$ to $v_{\pi^{-1}(1)}\otimes \cdots\otimes v_{\pi^{-1}(t)}$ and  $g_1(v_1)\otimes\cdots\otimes g_t(v_t)$ respectively, where $v_1,\ldots,v_t$ are vectors in $U$.

An element of group $\cL(n,q)$ permutes the set $X'=\{V_1,\ldots,V_t\}$. We claim that $\la\sigma\ra$ induces a transitive action on $X'$. Assume without loss of generality that $\{V_1,\dots,V_r\}$ is an orbit, where $1\le r\le t-1$. Let $V_1'=V_1\otimes\cdots\otimes V_r$, $V_2'=V_{r+1}\otimes\cdots\otimes V_t$. Then $\la\sigma\ra$ stabilizes $V_1'\otimes V_2'$, and by the same arguments as in the proof of Proposition \ref{prop_C4} we deduce that $\gcd(\dim(V_1),\dim(V_2))=1$: a contradiction to $m_0\ge 2$. This proves the claim.

We assume without loss of generality that $\sigma=(g_1\circ\cdots\circ g_t)\pi$, where $\pi=(1,2,\ldots,t)$ is the full cycle. We have $\sigma^t=h_1\circ\cdots\circ h_t$, where $h_1=g_1g_t\cdots g_3g_2$ and $h_i=g_ih_{i-1}g_i^{-1}$ for $2\le i\le t$. The $h_i$'s are pairwise conjugate in $\GL_{m_0}(q)$. We claim that each $\la h_i\ra$ is irreducible on $U$, so that $o(h_i)$ divides $q^{m_0}-1$. Since $o(h_1)$ divides $n$ which is a divisor of $q^m-1$, $h_1$ is semisimple on $U$.  Suppose to the contrary that $h_1$ is reducible on $U$, and take an $h_1$-invariant subspace $W$ of $U$ of smallest dimension. Let $m_1=\dim(W)$. Then $m_1\le m_0/2$. The subspace $V'=W\otimes g_2(W)\otimes\cdots\otimes g_t(W)$ is $\sigma^t$-invariant. Then $W'=\sum_{i=0}^{t-1}\sigma^i(V')$ is $\sigma$-invariant, and it has dimension at most $m_1^tt$. We thus have $m_1^tt\ge m_0^t$ by the fact $\sigma$ is irreducible (and $W'=V$). It follows that $t\geq \frac{m_0}{m_1}\geq 2^t$, which is impossible. This establishes the claim.

The order of $\sigma^t$ divides $q^{m_0}-1$ by the previous claim, so $n$ divides $(q^m-1)t$. By Lemma \ref{lem_ordqn} and the fact $\ord_n(q)=m_0^t$, we deduce that $m_0^t\leq m_0t$.  This holds only when  $(m_0,t)=(2,2)$. In this case, $n$ divides $2(q^2-1)$. If $n$ is odd, then $n\mid (q^2-1)$, which contradicts $\ord_n(q)=4$. Hence $n$ is even, and so $q$ is odd. Let $n'=n/2$. Since $\ord_n(q)=4$, we deduce that $\ord_{n'}(q)=2$. The triple $(n,n',2)$ satisfies
the conditions in \eqref{eqn_nn0rCond}, so $C$ is covered by Example \ref{exa_imp}. This completes the proof.
\end{proof}

\begin{proposition}\label{prop_C6}
Suppose that there is an $r$-group $R$ of symplectic type of exponent $r\cdot\gcd(2,r)$  such that $R$ is absolutely irreducible on $\F_{q^m}$ and $\cL(n,q)$  normalizes $R$, where $r$ is a prime not dividing $q$. Let $e$ be the smallest integer such that $p^e\equiv 1\pmod{|Z(R)|}$. If $q=p^e$, then the claim in Theorem \ref{thm_mainCode} holds in this case.
\end{proposition}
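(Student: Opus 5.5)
We treat the $\cC_6$ case: $R$ is of symplectic type with exponent $r\cdot\gcd(2,r)$, and $\dim V=m=r^a$ for some $a\ge1$. The normalizer of $R$ in $\GL_m(q)$ is $N=(Z\circ R).\mathcal{A}$, where $\mathcal{A}$ embeds in $\Sp_{2a}(r)$ (or in $\GO^{\pm}_{2a}(2)$ when $r=2$). The plan is to use the order of $\sigma$ and the irreducibility of $\la\sigma\ra$ to force $m$ to be tiny. Then we finish by the small-case lemmas, chiefly Lemma~\ref{lem_C6n3n0}, or by the remark that $m=r^a$ is small relative to $|N|$.

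\textbf{Step 1: bound the order of $\bar\sigma$.} The element $\sigma$ has order $n$ with $\ord_n(q)=m$. By Lemma~\ref{lem_ordbarpsi} we may assume $\ord_{n_1}(q)=m$, where $n_1=o(\bar\sigma)$. Look at the image of $\bar\sigma$ in $\bar N/\bar R\cong \mathcal{A}\le\Sp_{2a}(r)$, and let $k$ be its order. Then $\bar\sigma^{k}\in\bar R$, which is elementary abelian of exponent $r$, so $n_1$ divides $r\cdot k$. Write $k=k_{r'}r^{j}$. The $r$-part is controlled by Lemma~\ref{lem_exponent}: for a Sylow $r$-subgroup of $\GL_{2a}(r)$ the exponent is at most $r^{e}$ with $r^{e-1}<2a\le r^e$. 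The $r'$-part of an element of $\Sp_{2a}(r)$ divides $\prod_{i\le a}(r^{i}+1)$ or $r^{i}-1$ for some $i\le a$, so $k_{r'}<r^{a}+1$. Hence $n_1\le r\cdot r^{e}\cdot (r^a+1)$.

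\textbf{Step 2: compare with $\ord_{n_1}(q)=m$.} From $n_1\mid q^m-1$ and the minimality of $m$, we have $n_1> q^{m/2}$ roughly: $n_1$ cannot divide $q^{d}-1$ for any proper divisor $d$ of $m$, and $n_1\mid q^m-1$. Combined with Step 1, this gives $q^{r^a/2}\lesssim r^{a+e+1}\cdot 2$, where $q\ge r+1$ or at least $q\equiv1\pmod r$, since $e$ was chosen so that $q=p^e\equiv 1 \pmod{|Z(R)|}$. This exponential-versus-polynomial inequality should leave only $r^a=2$, and perhaps $r^a\in\{3,4\}$ with $q$ very small. I expect this to be the main computational obstacle: making the inequality sharp enough that each residual case is explicit, and then treating the residual cases $(m,q)$ individually, for instance with the Zsigmondy prime argument that $n_1$ must contain a primitive prime divisor of $q^m-1$, which is $\equiv1\pmod m$ and must divide $|\bar N|$.

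\textbf{Step 3: residual cases.} For $m=2$, $r=2$, $R$ is $Q_8$ or $D_8\circ C_4$ and $\bar N\le S_4$, so $n_1\in\{3,4\}$. Since $q=p$ by the hypothesis $q=p^e$ with $e$ minimal (here $q$ is an odd prime for $m=2$), Lemma~\ref{lem_C6n3n0} applies directly. If $n_1=3$ the code is standard, contradicting non-standardness. If $n_1=4$, either $\cL(n,q)$ induces $\mathrm{Sym}(X)$, and then $q=3$, $n=8$ lies in Example~\ref{exa_whole}; or the induced group on $X$ is a proper transitive subgroup of $S_4$ containing the $4$-cycle, i.e.\ $C_4$ or $D_8$. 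These I would handle by observing that a $D_8$-type action stabilizes the pair decomposition $\{\la1\ra\oplus\la\xi^2\ra,\ldots\}$, so $\cL(n,q)$ lies in a $\cC_2$ subgroup and Proposition~\ref{prop_C2} gives Example~\ref{exa_imp}. The $C_4$ case forces $\cL(n,q)=\cL_{\rm st}(n,q)$ by an order count. Any surviving cases with $m=3$ ($r=3$, $\bar N\le 3^2{:}\Sp_2(3)$) or $m=4$ would be checked directly: by order considerations $n$ divides $|N|$ and $q^m-1$, and the resulting short list of lengths is compared with Examples~\ref{exa_whole} and~\ref{exa_imp}, as in the proof of Proposition~\ref{prop_PreProof}.
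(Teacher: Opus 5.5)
\textbf{The gap is in Step~2.} The inequality $n_1>q^{m/2}$ is false. From $\ord_{n_1}(q)=m$ you only get $m\mid\varphi(n_1)$, hence $n_1>m$, and nothing forces $n_1$ to be exponentially large in $m$. For example, $\ord_5(3)=4$ and $\ord_{17}(3)=16$, while $5<3^2$ and $17<3^8$. These are exactly the configurations $(r,a)=(2,2)$ with $n_1=5$ and $(r,a)=(2,4)$ with $n_1=17$. Order considerations cannot exclude them in this class; they have to be sent to Lemma~\ref{lem_An1ClassS}.

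Your Step~1 bound on $o(\bar\sigma)$ is only polynomial in $m=r^a$, and $n_1$ can be as small as about $m$. So comparing the two gives no bound on $a$ at all, and the reduction to $m\in\{2,3,4\}$ does not follow. The Zsigmondy idea does not rescue this either. Take $r=2$, $a=8$, $q=3$, $n_1=257$: then $\ord_{257}(3)=256=m$, and $257\mid 2^{16}-1$, which divides $|\Sp_{16}(2)|$. (Your bound $k_{r'}<r^a+1$ is also wrong: $\Sp_6(2)\ge \Sp_2(2)\times\Sp_4(2)$ contains elements of order $15>9$. This is secondary.)

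\textbf{The missing idea is to bound $a$ through $\bar\psi$ rather than $\bar\sigma$.} By Lemma~\ref{lem_ordbarpsi}(1), $o(\bar\psi)=m=r^a$ exactly, and this is a power of $r$. Since $\overline{\cL(n,q)}$ embeds in $\bar R.\Sp_{2a}(r)$, the element $\bar\psi$ lies in a Sylow $r$-subgroup there. Lemma~\ref{lem_exponent} together with $\exp(\bar R)=r$ then gives $r^a\mid r^{1+b}$, where $r^{b-1}<2a\le r^b$. This logarithmic bound yields $a\le b+1$, so $a\le 5$, and $a\le 2$ once $r\ge 7$.

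Even after this, the residual cases are not confined to $m\in\{2,3,4\}$.
\begin{itemize}
\item The case $a=1$ gives $m=r$ for every prime $r$, an infinite family that needs its own argument. Comparing with $|N|$ gives $n\mid r^2(r^2-1)n_0$. By Lemma~\ref{lem_ordqn}, every prime divisor of $n_1$ is at least $r$, with equality only if it divides $n_0$. For odd $r$ this forces $n_1=r$, and the code falls into Example~\ref{exa_imp}.
\item The surviving cases with $a\ge2$ must be settled by $\nu_r$-arithmetic, showing $n\mid r(q^{r^{a-1}}-1)$ and hence Example~\ref{exa_imp}. Some need computation, and some lead to Lemma~\ref{lem_An1ClassS} rather than to a contradiction.
\end{itemize}

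\textbf{A smaller error in the $m=2$, $n_1=4$ case: your two subcases are mixed up.} The induced group on $X$ already contains $\la\sigma_1,\psi_1\ra\cong D_8$, so $C_4$ cannot occur. It is the $D_8$ case that the order count $|\overline{\cL_{\textup{st}}(n,q)}|=8$ rules out. Your $\cC_2$ argument would not work there, because $\sigma$ interchanges the two decompositions $\la1\ra_{\F_q}\oplus\la\xi^2\ra_{\F_q}$ and $\la\xi\ra_{\F_q}\oplus\la\xi^3\ra_{\F_q}$.
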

\begin{proof}
We write $n_0=\gcd(n,q-1)$ and $n_1=n/n_0$. Let $Z_0$ and $Z$ be the centers of $R$ and $\GL_m(q)$ respectively, and take an element $c$ of $Z_0$ of order $2$. Let $\bar{R}=R/Z_0$ and write $|\bar{R}|=r^{2k}$. We have $C_{\GL_m(q)}(R)=Z$ by the fact that $R$ is absolutely irreducible, and $N_{\GL_m(q)}(R)=Z.C_{\Aut(R)}(Z_0)$. Moreover,  $C_{\Aut(R)}(Z_0)=\bar{R}.\textup{Sp}_{2k}(r)$  which we elaborate here. We identify $\bar{R}$ with $V'=\F_r^k\times\F_r^k$ such that (1) $[g,h]=c^{f(\bar{g},\bar{h})}$ for a non-degenerate alternating form $f$ on $V'$, (2) if $|Z_0|=2$, then $g^2=c^{Q(\bar{g})}$ for a quadratic form $Q$ on $V'$ with associated bilinear form $f$. For $h\in R$, the inner automorphism defined is $i_h(g)=gc^{f(\bar{g},\bar{h})}$ for $g\in R$. For an isometry $\tau$ of $f$ ($Q$ if $|Z_0|=2$), there is $\tau_1\in C_{\Aut(R)}(Z_0)$ such that $\tau(\bar{g})=\overline{\tau_1(g)}$ for each $g\in R$. We have $\tau_1\circ i_h\circ\tau_1^{-1}(g)=i_{h'}$, where $h'$ is any element of $R$ such that $\overline{h'}=\tau(\bar{h})$.

The group $\cL(n,q)$ acts on $R$ via conjugation, and this yields an embedding of $\overline{\cL(n,q)}$ in $C_{\Aut(R)}(Z_0)$. Let $b$ be the smallest integer such that $2k\le r^b$. Then $o(\bar{\psi})=r^k$, and it divides $r^{1+b}$ by Lemma \ref{lem_exponent}. It follows that $k\le b+1$, so $r^{b-1}<2k\le 2b+2$. If $r=2$, we deduce that $b\le 4$ and $k\le 5$. If $r$ is odd, then we deduce that $b=1$ for $r\ge 7$ and $b\le 2$ for $r\in\{3,5\}$. It follows that either $k=1$, or $(r,k)$ is one of the pairs $(3,2), (3,3),(5,3),(2,2),(2,3),(2,4)$ or $(2,5)$.

We first consider the case $k=1$ (so $\bar{R}=\F_r\times \F_r$).  In this case, $\ord_n(q)=m=r$, and $\cL_{\textup{st}}(n,q)$ is a subgroup of $(r^{1+2}.\Sp_2(r))\circ Z$, the latter has order $r^3(r^2-1)(q-1)$. Hence $nr|r^3(r^2-1)(q-1)$. It follows that $n$ divides $r^2(r^2-1)n_0$. Let $r'$ be a prime divisor of $\frac{n}{n_0}$. By the facts $\ord_n(q)=r$ and $n_0=\gcd(n,q-1)$, we deduce that $\ord_{r'n_0}(q)=r$. By Lemma \ref{lem_ordqn}, it holds that $r'\ge r$, and equality holds if and only if $r'\mid n_0$ and $r'\nmid \frac{q-1}{n_0}$. If $r$ is odd, all prime divisors of $r^2-1$ is at most $\frac{r+1}{2}<r$. We thus have $n_1\mid r$, and so $n_1=r$. The conditions in \eqref{eqn_nn0rCond} are satisfied, so $C$ is covered by Example \ref{exa_imp} if $r$ is odd. Assume that $r=2$, so that we are in Line 3 of \cite[Table 4.6.B]{KL1990} (the same as Line 5 therein for $k=1$). Then $e=1$ and $q=p$. In the quotient group $\PGL_m(q)=\GL_m(q)/Z$, $\overline{\cL_{\textup{st}}(n,q)}$ is a subgroup of $A_4$ or $S_4$ by \cite[Proposition 4.6.7]{KL1990}. The element $\bar{\sigma}$ has order $n_1$, and $S_4$ has no elements of order $6$. Therefore, $n_1$ is not a multiple of $6$. Since  $n_1\mid 12$, we deduce that $n_1$ is $2,3$ or $4$. If $n_1=2$, then $C$ is covered by Example \ref{exa_imp} for the same reason as the $r$ odd case. If $n_1=3$, then $C$ would be standard by Lemma \ref{lem_C6n3n0}. If $n_1=4$, then $\overline{\cL_{\textup{st}}(n,q)}$ is a Sylow $2$-subgroup of $S_4$ and it contains $O_2(S_4)=\bar{R}$. Also, we have $o(\bar{\sigma})=4$ and $o(\bar{\psi})=2$, and they generate the Sylow $2$-subgroup of $S_4$. Since $C$ is non-standard, we have  $\overline{\cL(n,q)}\ne\overline{\cL_{\textup{st}}(n,q)}$; it follows that $\overline{\cL(n,q)}=S_4$. In its induced action on $X=\{\la\xi^i\ra_{\F_q}:0\le i\le 3\}$, $\la\bar{\sigma},\bar{\psi}\ra$ corresponds to a group of order $8$. Since $S_4$ has no normal subgroup of order $3$, $\overline{\cL(n,q)}$ induces the full symmetric group on $X$. By Lemma \ref{lem_C6n3n0}, we have $q=3$ and $C$ is covered by Example \ref{exa_whole}.

It remains to consider the seven remaining $(r,k)$ pairs. Let $n_1=r_1^{a_1}\cdots r_t^{a_t}$ with $r_i$'s being distinct primes and $a_i>0$ for each $i$. Let $m_i=\ord_{r_i^{a_i}}(q)$ for each $i$, and let $b_i=\gcd(m,r_i^{a_i-1}(r_i-1))$. Since $\phi(r_i^{a_i})=(r_i-1)r_i^{a_i-1}$, we deduce that $m_i$  divides $b_i$. It holds that $q^{m_i}\equiv 1\pmod{r_i^{a_i}}$. We have $\ord_{n_1}(q)=m$ by Lemma \ref{lem_ordbarpsi}, so $m$ is the least common multiple of the $m_i$'s. We examine those pairs one by one.
\begin{itemize}
  \item[(1)]If  $(r,k)=(2,2)$, we have $m=4$, and $|\overline{R}.\Sp(4,2)|=2^8\cdot 3^2\cdot 5$ which is a multiple of $n_1m$. We deduce that $r_i\in\{2,3,5\}$, and $b_i=4,2$ or $4$ according as $r_i=2,3$ or $5$. By using the $4$-dimensional absolutely irreducible $R.\Sp(4,2)$-module over $\F_5$, we realize $R.\Sp(4,2)$ as a subgroup of $\GL_4(5)$. By Magma \cite{Magma} we check that it has a unique conjugacy class of cyclic subgroups of order divisible by $5$, which has order $5$. We deduce that $n_1=5$ or $5\nmid n_1$. In the former case, $n=5n_0$, and the triple $(n,n_0,r)$ satisfies the conditions in Lemma \ref{lem_An1ClassS}. In the latter case, we deduce that $\ord_{2^{a_i}}(q)=4$ if $r_i=2$. By Lemma \ref{lem_basic} and the  fact $\nu_2(q^2-1)\ge 2^2$, we have $\nu_2(q^2-1)=\nu_2(q^4-1)-1\ge a_i-1$, i.e., $q^2\equiv 1\pmod{2^{a_i-1}}$. We thus have $n\mid 2(q^2-1)$, and $(n,n_0',2)$ satisfies the conditions in \eqref{eqn_nn0rCond}, where $n_0'=\gcd(n,q^2-1)$. In both cases, the code $C$ arises as depicted in Theorem \ref{thm_mainCode}.
  \item[(2)] If  $(r,k)=(2,4)$, we have $m=16$, and $|\overline{R}.\Sp(8,2)|=2^{24}\cdot 3^5\cdot 5^2\cdot7\cdot 17$. It follows that $r_i\in\{2,3,5,7,17\}$, and $b_i=16,24,2$ or $16$ according as $r_i=2,3,5,7$ or $17$. If $17$ does not divide $n_1$, then $\ord_{2^{a_i}}(q)=16$ for $r_i=2$, and we deduce that $C$ arises from Example \ref{exa_imp} as in the $(r,k)=(2,2)$ case. Assume that $17\mid n_1$. The group $\Sp_8(2)$ has only one conjugacy class of cyclic groups of order divisible by $17$, which has order $17$ and acts irreducibly on $\bar{R}$. Since $\bar{R}$ has exponent $2$ and $o(\bar{\sigma})=n_1$, we deduce that $n_1=17$ or $34$. If $n_1=17$, then $(n,n_0,17)$ satisfies the conditions in Lemma \ref{lem_An1ClassS} and so $C$ arises as described therein. If $n_1=34$, then $\bar{\sigma}^2$ acts on $\bar{R}$ irreducibly via conjugation. On the other hand, $\bar{\sigma}^{17}$ has order $2$, lies in $\bar{R}$ and commutes with $\bar{\sigma}^2$: a contradiction.
  \item[(3)]The cases where $(r,k)\in\{(3,2), (3,3),(5,3),(2,3),(2,5)\}$ are dealt with similarly, and we need to show that $n\mid r(q^{r^{k-1}}-1)$. It will then follow that the conditions in \eqref{eqn_nn0rCond} are satisfied for $(n,n_0',r)$ with $n_0'=\gcd(n,q^{r^{k-1}}-1)$, and so $C$ arises from Example \ref{exa_imp}: a contradiction. We only give details for $(r,k)=(5,3)$ here. In this case we have $m=125$, and $|\overline{R}.\Sp(6,5)|=2^{10}\cdot 3^4\cdot 5^{15}\cdot7\cdot13\cdot31\}$ which is a multiple of $n_1m$. The $r_i$'s are in the set $\{2,3,5,7,13,31\}$.  Since $m_i$  divides $\gcd(5^3,r_i^{a_i-1}(r_i-1))$, we deduce that: if $r_i\in\{2,3,7,13\}$, then $q\equiv 1\pmod{r_i^{a_i}}$; if $r_i=31$, then $q^5\equiv 1\pmod{r_i^{a_i}}$. We thus have $m_i=125$ when $r_i=5$. By Lemma \ref{lem_basic} and the fact $r_i^{a_i}\mid (q^{125}-1)$, we have $\nu_5(q^{25}-1)=\nu_{5}(q^{125}-1)-1\ge a_i-1$, so $q^{25}\equiv 1\pmod{r_i^{a_i-1}}$. We conclude that $n_1\mid 5(q^{25}-1)$ as desired.
\end{itemize}
This completes the proof.
\end{proof}

We now have all the necessary lemmas and propositions to complete the proof of Theorem \ref{thm_mainCode}. Suppose to the contrary that the NSIC code $C$ of length $n$ over $\F_q$ is not covered by Examples \ref{exa_repetition}-\ref{exa_imp}, nor can be obtained from them by repeated uses of Constructions \ref{exa_lift}-\ref{exa_product}.  Suppose that $\cL(n,q)$ is a geometric subgroup of the similarity group $\Delta(V,\kappa)$, where $\kappa$ is one of the forms in (A)-(D). Since $\cL(n,q)$ acts irreducibly on $\F_{q^m}$, it is not of Aschbacher class $\cC_1$. It is not of Aschbacher class $\cC_8$ by the choice of the form $\kappa$. By Proposition \ref{prop_C3}, the largest integer $b$ such that $\cL(n,q)$ is a subgroup of $\Gamma\textup{L}_{m/b}(q^b)$ is $1$, i.e., it is not of Aschbacher class $\cC_3$. By Proposition \ref{prop_C5}, the smallest field over which the $\cL(n,q)$-module $V$ can be realized is $\F_q$, i.e., $\cL(n,q)$ is not of Aschbacher class $\cC_5$. By Proposition \ref{prop_C2},  $\cL(n,q)$ does not stabilize an $m_0$-space decomposition, i.e., it is not of Aschbacher class $\cC_2$. By Propositions \ref{prop_C4} and \ref{prop_C7}, $\cL(n,q)$ does not stabilize a tensor space decomposition of $V$, i.e., it is not of Aschbacher class $\cC_4$ or $\cC_7$. By Proposition \ref{prop_C6},  $\cL(n,q)$ is not of Aschbacher class $\cC_6$. To sum up, $\cL(n,q)$ is not a geometric subgroup of $\Delta(V,\kappa)$. \vspace{5mm}

Suppose that $\cL(n,q)$ is a subgroup of $\Delta(V,\kappa)$ of Aschbacher class $\cS$ from now on. In particular, $\cL(n,q)^{(\infty)}$ is absolutely irreducible on $V=\F_{q^m}$, and $\F_q$ is the smallest field over which this representation can be realized. Here, $\cL(n,q)^{(\infty)}$ is the terminating number of its derived series. We take the same notation as in Lemma \ref{lem_C9faithful}, and let $\sigma_1,\psi_1$ be the elements of $\textup{Sym}(X)$ that correspond to $\bar{\sigma},\bar{\psi}$ respectively, where $X$ is defined in (\ref{eqn_Xdef}). We have $o(\sigma_1)=n_1$ and $o(\psi_1)=m=\ord_{n_1}(q)$ by Lemma \ref{lem_ordbarpsi}, and $\psi_1$ normalizes $\la\sigma_1\ra$. By Lemma \ref{lem_ordqn}, we have $m\le n_1$.   By Lemma~\ref{lem_C9faithful}, we have one of the cases (a)-(d). We observe that in all those cases $\bar{\sigma}$ is contained in $\overline{\cL(n,q)}^{(\infty)}$, so the latter group is transitive on $X$. 

We first consider case (a) of Lemma \ref{lem_C9faithful}. We have $\overline{\cL(n,q)}^{(\infty)}=M_{23}$ and $n_1=23$. The Schur multiplier and outer automorphism group of $M_{23}$ are both trivial. There is a unique conjugacy class of subgroups of order $23$ in $M_{23}$. The normalizer of a subgroup $C_{23}$ of order $23$ in $M_{23}$ is exactly $C_{23}:C_{11}$, so $m=11$. By \cite[Table 3]{Hiss-Malle} we have $q=2$, so $n=23$. The binary irreducible cyclic code of length $23$ is the Golay code, cf. Example \ref{exa_Golay}.

We next consider case (b) of Lemma \ref{lem_C9faithful}. Both $\PSL_2(11)$ and $M_{11}$ have one conjugacy class of cyclic subgroups of order $11$, and the normalizer of a subgroup of order $11$ has order $55$ in both groups. It follows that $m=5$. The ternary irreducible cyclic code of length $11$ is the Golay code in Example \ref{exa_Golay}, and the ternary irreducible cyclic code of length $22$ is obtained from the former code by Construction \ref{exa_ext}. We assume $q\ne 3$ in the following. The Schur multiplier  of $M_{11}$ is trivial, so $\overline{\cL(n,q)}^{(\infty)}=M_{11}$. By \cite[Table 3]{Hiss-Malle}, we must have $q=3$, which we have excluded from consideration. The Schur multiplier of $\PSL_2(11)$ has order $2$, so $\cL(n,q)=\SL_2(11)$ or $\PSL_2(11)$. We have $p\ne 11$ by the fact $\gcd(n,q)=1$ and $n/n_0=11$. By \cite{Hiss-Malle}, we deduce that $\cL(n,q)=\PSL_2(11)$ and there is a unique such $5$-dimensional absolutely irreducible representation of $\PSL_2(11)$. Moreover, $\F_q$ is the smallest extension field of $\F_p$ that contains a root of $x^2+x+3=0$ in $\F_q$. If $p=3$, then $q=3$, which we have excluded from consideration.
If $p=2$, then $q=4$, then all the $\PSL_2(11)$-orbits on $V\setminus\{0\}$ have size larger than $n_1(q-1)=33$, contradicting $|\cU_{n,q}|=n\in \{11,33\}$.  Now we can assume that $p\ne 2,3$. Let $c$ be a root of $x^2+x+3=0$ in $\F_q$. There are two conjugacy classes of subgroups of index $11$ in $\PSL_2(11)$ and they are dealt with similarly, so we give details for one class only. There is a basis of $V$ with respect to which we have $\PSL_2(11)=\la A,B\ra$ with
\[
 A:=\begin{pmatrix}
     0 & 1 & 0 & 0 & 0\\
     1 & 0 & 0 & 0 & 0\\
     0 & 0 & 0 & 0 & 1\\
     1 &-1 & c & 1 &-c\\
     0 & 0 & 1 & 0 & 0\\
 \end{pmatrix},\quad
 B:=\begin{pmatrix}
     0 & 0 & 0 & 1 & 0 \\
     0 & 0 & 1 & 0 & 0 \\
     0 &-1 &-1 & 0 & 0 \\
     c+1&0 & 0 &-c &c+2\\
     1 & 0 & 0 & -1 &1\\
 \end{pmatrix},
\]
and $H=\la A,B^{AB}\ra$ is the stabilizer of $\la 1\ra_{\F_q}$ in it.\footnote{We can take $H=\la A,B^{AB^2} \ra$ for the other conjugacy class.} We have $A^2=I_5$ and $B^3=I_5$, where $I_5$ is the identity matrix. Let $Y:=B^{AB}$, which is in $H$.  Then the $1$-eigenspace and $(-1)$-eigenspace of $A$ are \[\la (1,0,0,1,-c),\,(0,1,0,-1,c),\, (0,0,1,0,1)\ra_{\F_q} \text{ and }\la (1,-1,0,0,0),(0,0,1,0,-1)\ra_{\F_q},\] respectively. Similarly, the $1$-eigenspace and $(-1)$-eigenspace of $A^Y$ are
\begin{align*}
&\la(1,0,0,1,-c),(0,1,0,-c-1,-1),(0,0,1,-2,c)\ra_{\F_q} \text{ and }\\
&\la (5,0,c-1,2c+3,1-c),(0,5,-2c+2,c-1,2c+3)\ra_{\F_q},
\end{align*}
respectively. Their common eigenvectors form the $1$-dimensional subspace $\la (1,0,0,-1,c)\ra_{\F_q}$, so it equals $\la 1\ra_{\F_q}$. A direct computation shows that  $\la v\ra_{\F_q}$ is not fixed by $A^{Y^2}$: a contradiction. Hence this case does not occur.\

For case (c) of Lemma \ref{lem_C9faithful}, we have $\overline{\cL(n,q)}=A_{n_1}$ or $S_{n_1}$, and the former case only occurs when $n_1$ is even. The Schur multiplier of $A_{n_1}$ has order $6$ if $n_1\in\{6,7\}$ and has order $2$ otherwise, cf. \cite[p.~173]{KL1990}. By Lemma \ref{lem_ordbarpsi}, we have $\ord_{n_1}(q)=m$, and thus $m\mid \phi (n_1)$,  where $\phi$ is the Euler totient function. We have $n\le n_1(q-1)$ by the same lemma. If $n_1\ge 5$ is a prime such that $\ord_{n_1}(q)=n_1-1$, then the code $C$ is obtained from Example \ref{exa_repetition} by applying Constructions \ref{exa_lift} and \ref{exa_ext} by Lemma \ref{lem_An1ClassS}. We thus assume that $\ord_{n_1}(q)\ne n_1-1$ if $n_1$ is a prime. If $V$ is the fully deleted module of $A_{n_1}$, then $q=p$ and $m=n_1-1$. We deduce from $\ord_{n_1}(p)=n_1-1$ that $n_1$ is a prime and so the code $C$ appears in Example \ref{exa_repetition}.

If $n_1\ge 10$ and $\cL(n,q)^{(\infty)}=A_{n_1}$, then there is no feasible $A_{n_1}$-module $V$ with $m\le\phi (n_1)$ by \cite[Theorem 5.3.5]{KL1990}. If $n_1\ge 12$ and $\cL(n,q)^{(\infty)}=2.A_{n_1}$, then the dimension $m$ of the faithful $2.{A_{n_1}}$-module $V$ is at least  $2^{\lfloor(n_1-3)/2\rfloor}$ by \cite{Kleshchev12}. It follows that $2^{\lfloor(n_1-3)/2\rfloor}\le \phi(n_1)$, but this does not hold for $n_1\ge 12$: a contradiction. If $n_1\in\{10,11\}$ and $\cL(n,q)^{(\infty)}=2.A_{n_1}$, then $(n_1,q,m)=(10,5,8)$ and there is a unique such representation up to weak equivalence by \cite[Table 3]{Hiss-Malle}. By using the data in  \cite[Online version v3]{ATLAS}, we check in Magma \cite{Magma} that all $2.A_{10}$-orbits have lengths larger than $n_1(q-1)=40$, so this case does not occur. If $n_1=6,8$ or $9$, then $m$ divides $\phi(n_1)=2,4$ or $6$ correspondingly.  By \cite[Proposition 5.3.7]{KL1990} there is such an absolutely irreducible projective representation of $A_{n_1}$ only if $(n_1,p)=(6,3)$ or $(8,2)$. Both cases are excluded by the facts that $\gcd(n,p)=1$ and $n_1\mid n$.

It remains to consider the case where $n_1\in\{5,7\}$. If $n_1=7$, we have $m=3$ by the fact $m\mid \phi(7)$ and $m\ne n_1-1$. We have $\cL(n,q)^{(\infty)}=3.A_7$, $q=5^2$ and there is a unique such feasible module by
\cite[Table 3]{Hiss-Malle}. We check in Magma \cite{Magma} that the $3.A_7$-orbits on $V\setminus\{0\}$ all have size larger than $7(q-1)=168$, so this case does not occur. If $n_1=5$, we have $m=2$ by the fact $m\mid\phi(5)$ and $m\ne n_1-1$. We have the following candidates: (1) $m=2$, $\cL(n,q)^{(\infty)}=A_5=\SL_2(4)$, $q=4$ and $V$ is the natural module of $\SL_2(4)$, (2) $m=2$, $\cL(n,q)^{(\infty)}=2.A_5$, $p\not\in\{2,5\}$, $\F_q$ is the smallest extension field of $\F_p$ that contains a root of $x^2+x-1$. For (1), the corresponding code $C$  is covered by Example \ref{exa_whole}. For (2), We have $q\equiv 4\pmod{5}$ by the fact $\ord_5(q)=2$. Let $\zeta$ be an element of order $5$ in $\F_{q^2}$, and let $b=\zeta+\zeta^4$. We have $b^2+b-1=0$, and $b$ is in $\F_q$. The minimal polynomial of $\zeta$ over $\F_q$ is $x^2-bx+1=0$. We have $X=\{\la\zeta^i\ra_{\F_q}:0\le i\le 4\}$, and take an element $g\in 2.A_5$ such that $\bar{g}$ induces the permutation $(\la\zeta\ra_{\F_q},\la\zeta^2\ra_{\F_q})(\la\zeta^3\ra_{\F_q},\la\zeta^4\ra_{\F_q})$. By replacing $g$ with $\sigma^ig$ for some $\sigma^i\in Z$ if necessary, we may assume that $g(1)=1$. We have $g(\zeta)=\lambda\zeta^2$ for some $\lambda\in \F_q^*$.  It follows that $g(\zeta^2)=g(b\zeta-1)=\lambda b^2\zeta-\lambda b-1$. We deduce from $g(\zeta^2)\in\la \zeta\ra_{\F_q}$ that $\lambda b=-1$. Similarly, $g(\zeta^3)=b\zeta-1-b\in \la b-\zeta\ra_{\F_q}$ and we deduce that $2b=0$ in $\F_q$, i.e., $p=2$: a contradiction. This completes the analysis of case (c).

We then consider case (d) of Lemma \ref{lem_C9faithful}.  We have $n_1=\frac{q'^{d}-1}{q'-1}$, where $q'=p_1^{f_1}$ for a prime $p_1$ and an integer $f_1$. By \cite{Penttila2016Singer}, there is a unique cyclic subgroup of order $n_1$ in $\PGL_{d}(q')$ up to conjugacy and its normalizer in $\PGaL_{d}(q')$ has order $df_1n_1$.  It follows that $m$ divides $df_1$. We do not need to consider the case $(d,q')=(2,4)$ by the equivalence of the natural permutation actions of $\PSL_2(4)$ and $A_5$ on five points.

We first consider the case where $p_1=p$ and $\SL_d(q')$ is a cover group of $\cL(n,q)^{(\infty)}$.  We regard $V$ as an absolutely irreducible $\SL_d(q')$-module. By \cite[Proposition 5.4.6]{KL1990}, we deduce that $f\mid f_1$ and $m=d_1^{f_1/f}$ for an irreducible $\SL_d(q')$-module over the algebraically closed field $\overline{\F_q}$. By \cite[Theorem 1.11.5]{BHR407}, we have $d_1\ge d$. Let $t=\frac{f_1}{f}\in\mathbb{N}$.  We have $m=\ord_{n_1}(q)=dt$ by Lemma \ref{lem_ordn1q}, so $dt=d_1^t\ge d^t$. If $t>1$, this holds only if $d=d_1=t=2$. Then $m=4$, and we have $\cL(n,q)^{(\infty)}=\SL_2(q^2)=\Omega^-_4(q)$. The module $V$ has a non-degenerate $\Omega^-_4(q)$-invariant quadratic form, and the corresponding code $C$ is covered by Example \ref{exa_C8} by our analysis at the beginning of this section. If $t=1$, then $m=d$ and $q=q'$, and $V$ is the natural module of $\SL_m(q)$. We have  $\cL(n,q)^{(\infty)}=\SL_m(q)$, and $C$ is covered by Example \ref{exa_whole}.

We next consider the case where $p_1=p$ and  $\cL(n,q)^{(\infty)}$ does not have $\SL_d(q')$ as a cover group. By \cite[Table 5.1D]{KL1990}, we have $(d,q')\in\{(2,9),(3,2),(3,4),(4,2)\}$. If $(d,q')=(4,2)$, then $n_1=15$ and $m\mid 4$. By \cite[Theorem 1.11.5]{BHR407} we have $m\ge4$, so $m=4$. It follows that $V$ is the natural module of $\SL_4(2)$ and $q=2$, but then $\cL(n,q)^{(\infty)}=\SL_4(2)$: a contradiction.  If $(d,q')=(2,9)$, then $n_1=10$,  $m\mid 4$, and it follows that $\cL(n,q)^{(\infty)}=\PSL_2(9)=A_6$ and $m=4$. It has $\SL_2(9)$ as a cover group: a contradiction.  If $(d,q')=(3,2)$, then $\cL(n,q)^{(\infty)}=\SL_2(7)$, $n_1=7$ and $m=3$. If $(d,q)=(3,4)$,  $n_1=21$, $m\mid 6$. There is no feasible absolutely irreducible module $V$ in either of those two cases.

We then consider the case where $p_1\ne p$. If $d\ge 3$ and $(d,q')\not\in\{(3,2),(3,4)\}$, then $m\ge q'^{d-1}-1$ by  \cite{La-Se} (see also \cite[Table 5.3.A]{KL1990}). The inequality $p_1^{df_1}-1\le df_1$ holds for no triple $(p_1,d,f_1)$ with $p_1$ prime and $d\ge 3$: a contradiction. If $d=2$ and $q'\ne 9$, then we have $\frac{q'-1}{\gcd(2,q'-1)}\le m\le 2f_1$ which also leads to a similar contradiction unless $q'=5$.

We first show that $(d,q')\not\in\{(2,5),(2,9),(3,4)\}$. If $(d,q')=(2,5)$, then $m=2$, $n_1=6$ and $p\ne 2$ by the fact $\gcd(n,q)=1$. In this case, we have $\overline{\cL(n,q)}=\PGL_2(5)$ by the fact $\PSL_2(5)$ does not contain an element of order $6$. For the unique $2$-dimensional absolutely irreducible $2.\PSL_2(5)$-module $V$, the normalizer of $\PSL_2(5)$ in $\PGL(V)$ is $\PSL_2(5)$ by \cite[Proposition 4.5.1]{BHR407}. This contradicts the fact that $\overline{\cL(n,q)}=\PGL_2(5)$, so we have $(d,q')\ne (2,5)$. If $(d,q')=(2,9)$, then $n_1=10$, $m\mid 4$, and $p\ne 2$ by the fact $\gcd(n,q)=1$. By \cite{Hiss-Malle}, we have $\cL(n,q)^{(\infty)}=\SL_2(9)$, $m=4$, $q=p$ and $V$ has a non-degenerate alternating form $\kappa$. Among the three conjugacy classes of subgroups of index $2$ in $\PGaL_2(9)$, exactly one, say $G_0$, has an element $h$ of order $10$. The group $G_0$ has only one conjugacy class of cyclic subgroups of order $10$, and $|N_{G_0}(\la h\ra)|=20$, while $\la\bar{\sigma},\bar{\psi}\ra$ has order $40$. We deduce that $\overline{\cL(n,q)}=\PGaL_2(9)$. By \cite[Proposition 4.5.10]{BHR407}, the normalizer of $\PSL_2(9)$ in $\overline{\Delta(V,\kappa)}$ has order $2\cdot|\PSL_2(9)|$: a contradiction to $\overline{\cL(n,q)}=\PGaL_2(9)$. Hence $(d,q')\ne (2,9)$. If $(d,q')=(3,4)$, then $n_1=21$, $m\mid 6$, and $p\ne 3$ by the fact $\gcd(n,q)=1$. By \cite[Table 3]{Hiss-Malle}, we have $m=6$, $\cL(n,q)^{(\infty)}=6.\PSL_3(4)$, and $q$ is the splitting field of $x^2+x+1$ over $\F_p$. Moreover,  $V$ has a non-degenerate unitary form $\kappa$ if $p\equiv 5,11\pmod{12}$; we have $\kappa=0$ otherwise. By arguing as in the previous two cases, we deduce that $\overline{\cL(n,q)}=\PGaL_3(4)$. The normalizer of $\PSL_3(4)$ in $\overline{\Delta(V,\kappa)}$ has order $2\cdot|\PSL_3(4)|$ by \cite[Proposition 4.5.18]{BHR407}): a contradiction to $\overline{\cL(n,q)}=\PGaL_3(4)$. Hence $(d,q')\ne (3,4)$.

It remains to consider the case $(d,q')=(3,2)$. We have $n_1=7$ and $m=d=3$ by the fact $m\mid df_1$. We have $p\ne7$, since $\cL(n,q)^{(\infty)}$ has $\SL_2(7)$ as a cover group and there is no absolutely irreducible $\SL_2(7)$-module of dimension $3$.  By \cite[Table 2 (c)]{Hiss-Malle}, we have $\cL(n,q)^{(\infty)}=\PSL_3(2)\cong \PSL_2(7)$. The field $\F_q$ is the splitting field of $x^2+x+2$ over $\F_p$. We have $q\equiv2,4 \mod 7$ by the fact $\ord_7(q)=3$.  Let $c$ be a root of $x^2+x+2=0$ in $\F_q$. There are two conjugacy classes of subgroups of index $7$ in $\PSL_3(2)$ and they are dealt with similarly, so we give details for one class only. There is a basis of $V$ with respect to which we have $\PSL_3(2)=\la A,B\ra$ with
 \[   A:=\begin{pmatrix}  1 & -1-c & c \\  0 & -1 & 0 \\  0&   0 & -1
   \end{pmatrix},\quad
   \, B:=\begin{pmatrix}       0&1& 0\\ 0& 0&1\\1& 0&0 \end{pmatrix},\]
and $H=\la A, B^{AB}\ra$  is the stabilizer of $\la 1\ra_{\F_q}$ in $\PSL_3(2)$.\footnote{We can take $H=\la A,B^{AB^{-1}} \ra$ for the other conjugacy class.} We have $A^2=I_3$ and $B^3=I_3$, where $I_3$ is the identity.  Set $Y=B^{AB}$. The $(-1)$-eigenspace of $A$ is $\la (0,1,0),(0,0,1)\ra_{\F_q}$ and the $1$-eigenspace is $\la (1, -(b+1)/2, b/2)\ra_{\F_{q}}$.  The common eigenvectors of $A$ and $A^Y$ form the $1$-dimensional subspace  $\la (0,1,-(b+1)/2)\ra_{\F_q}$, so it corresponds to $\la 1\ra_{\F_q}$ in $X$. It is not stabilized by $A^{Y^2}$: a contradiction. Hence this case does not occur.\medskip

To summarize, we have now analyzed the four cases in Lemma \ref{lem_C9faithful}. It follows that Theorem \ref{thm_mainCode} holds when $\cL(n,q)$ is a subgroup of $\Delta(V,\kappa)$ of Aschbacher class $\cS$. This completes the proof of Theorem \ref{thm_mainCode}.

\section{Applications}\label{sec_application}
\subsection{Linear recurring sequence subgroups}\label{sec_lrcc} 
 NSIC codes are closely related to non-standard linear recurring sequence subgroups, cf. \cite{Hollmann2023}.  Brison and Nogueira investigated them in a series of papers \cites{Brison2003,Brison2008,Brison2009,Brison2010,Brison2014}.
   A sequence ${\bf s}=s_0,s_1,\ldots$ in the algebraic closure $\overline{\F_q}$ is termed  a linear recurring sequence of order $m$ if it satisfies a (homogeneous) linear recurrence relation of the form
\begin{equation}\label{eqn_sequence} s_k=a_{m-1}s_{k-1}+\cdots+ a_1  s_{k-m+1}+a_0 s_{k-m}
\end{equation}
for all integers $k\geq m$, where $m\geq 1$, $a_0\in\F_q^*$ and $a_1,\ldots,a_{m-1}\in\F_q$. The monic polynomial  $f(x)=x^m-a_{m-1}x^{m-1}-\cdots-a_1x-a_0$ in  $\F_q[x]$ with $f(0)=-a_0\ne 0$ is called the characteristic polynomial of the recurrence relation \eqref{eqn_sequence},   and such a sequence ${\bf s}$  is  referred to as  an $f$-sequence. We say that such an  $f$-sequence ${\bf s}$ in $\overline{\F}_q$ is {\em cyclic\/} if there exists $\alpha\in \overline{\F_q}$ such that $s_{k+1}=\alpha s_{k}$ for all $k\geq 0$.   Moreover, the check polynomial $f(x)$ of \textbf{s} corresponds to the minimal polynomial of $\xi\in \cU_{n,q}$ over $\F_q$. For further information on linear recurrence relations and linear recurring sequences over finite fields, please refer to \cites{LRS1965,FiniteField,LRS,Zierler1959LRS}. As a consequence,
 an $f$-sequence ${\bf s}=(s_0,s_1,\ldots, s_{n-1})$ of period $n$ can represent the finite multiplicative subgroup $\cU_{n,q}\leq (\overline{\F_q})^*$; in this case, we refer to $\cU_{n,q}$ as an $f$-subgroup. Further, we say that $\cU_{n,q}$ is a non-standard $f$-subgroup if a non-cyclic $ f$-sequence can represent it, and we can call $\cU_{n,q}$ a standard  $f$-subgroup otherwise.  There is a one-to-one correspondence between non-standard pairs and non-standard linear recurring sequence subgroups, see \cite[Section 4]{Hollmann2023}.
 To sum up, the classification of  NSIC codes is equivalent to the classification of non-standard linear recurring sequence subgroups.  This seems to have been first investigated by Somer \cites{Somer1972,Somer1977}.

By using our main result, Theorem \ref{thm_mainCode}, we give a classification of non-standard cyclic codes for the case $m=2$. This concludes the study of linear recurring sequence subgroups with $m=2$ initiated and systematically studied by Brison and Nogueira \cites{Brison2003,Brison2008,Brison2009}.
\begin{corollary}
Let $C$ be a non-degenerate irreducible cyclic code of length $ n$ over the field $\F_q$. If $m=\ord_n(q) = 2$ and $C$ is non-standard, then $C$ is one of the following cases:
\begin{itemize}
    \item[(1)]  The code has length $n = q^2 - 1>3$  (as shown in Example~\ref{exa_whole}), and its permutation automorphism group is $\GL_2(q)$;
    \item[(2)] The code has length $ n = 2n_0 > 4$, where $ n_0 \mid (q - 1) $ and $ \frac{q - 1}{n_0}$ is odd. Its permutation automorphism group is  $C_{n_0}^2\rtimes C_2$.
    \item[(3)] The code has length $n=k(q_0^2-1)$, where $q=q_0^t$ for odd integer $t>1$ and $k\mid \frac{q-1}{q_0-1}$. Its permutation group contains a subgroup isomorphic to $C_{k(q_0-1)}\circ\GL(2,q_0)$.
\end{itemize}
\end{corollary}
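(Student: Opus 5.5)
Since $m=2$, I would start from Theorem~\ref{thm_mainCode} and go through which of the examples and constructions can produce a non-standard code with $\ord_n(q)=2$. Example~\ref{exa_repetition} needs $m=n-1\ge4$, Example~\ref{exa_C8} has $m=4$, and Example~\ref{exa_Golay} has $m\in\{11,5\}$; so none of these is a base case. With $m=2$, Construction~\ref{exa_product} would need $m=m_1m_2$ with $m_1,m_2\ge2$, which is impossible. Construction~\ref{exa_expansion} with $b=2$ would force $\ord_n(q^2)=1$, and then the code over $\F_{q^2}$ is degenerate-trivial (length $n\mid q^2-1$, $m_0=1$) and hence standard. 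So only three ingredients survive: Example~\ref{exa_whole} with $m=2$, Example~\ref{exa_imp} with $k=2$ and $m'=1$, and Constructions~\ref{exa_lift} and~\ref{exa_ext} applied to these. Lifting preserves $m$, and extension preserves $m$.

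For Example~\ref{exa_imp} with $k=2$ and $m'=1$, we have $n'\mid q-1$, $n=2n'$ and $\ord_n(q)=2$. By Proposition~\ref{prop_nn0rChar}, this means $2\in\pi^*(n')$, i.e.\ $\nu_2(n')=\nu_2(q-1)$, which is the same as $(q-1)/n'$ being odd. Then $n_0=\gcd(n,q-1)=n'$, which gives case~(2) provided $n>4$ (Lemma~\ref{lem_C2para}). Applying Construction~\ref{exa_ext} to such a code multiplies $n$ by a divisor $u$ of $(q-1)/n_0$, which is odd. This keeps the form $2n_0'$ with $(q-1)/n_0'$ odd, so the family is closed. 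Applying Construction~\ref{exa_lift} with $t$ odd gives $q^t$; here $(q^t-1)/(q-1)$ is odd, so $(q^t-1)/n_0$ is still odd and we remain in case~(2).

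For the group in case~(2), I would argue that $\cL(n,q)$ stabilizes the decomposition $\F_{q^2}=\F_q\oplus\F_q\xi$. Its image on $X$ is then $S_2$ and it contains $\cU_{n_0,q}^2$, which yields $C_{n_0}^2\rtimes C_2$. To show it is no larger, I would run the Aschbacher analysis for $m=2$: any group strictly larger in $\GL_2(q)$ that still fixes $\cU_{n,q}$ would, by Proposition~\ref{prop_PreProof} and Lemma~\ref{lem_C6n3n0}, force $n=q^2-1$ or the small exceptional case $q=3$, $n=8$. The latter is $q^2-1$ again and falls into case~(1).

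For Example~\ref{exa_whole} over $\F_{q_0}$ with $m=2$, lifting with odd $t$ gives a code of length $q_0^2-1$ over $q=q_0^t$. Then Construction~\ref{exa_ext} multiplies the length by $k\mid (q-1)/n_0$, where $n_0=q_0-1$; this is exactly case~(3). The permutation group then contains $\GL_2(q_0)$ together with the scalars $\cU_{k(q_0-1)}$, i.e.\ the central product. When $t=1$ and $k=1$ this is case~(1), with $\cL=\GL_2(q)$ and the condition $n>3$ coming from excluding $(m,q)=(2,2)$. Mixed compositions, such as lifting a code from Example~\ref{exa_imp} and then extending, are reduced to the two families above by the closure computations already made.

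The main obstacle is making the reduction rigorous when constructions are composed in arbitrary order, and in particular excluding descending chains that temporarily pass through larger $m$. I would handle this by noting directly that, by the proof of Theorem~\ref{thm_mainCode} specialized to $m=2$, the only Aschbacher cases available are $\cC_2$, $\cC_5$ or the full $\SL_2(q)$. These correspond respectively to case~(2), case~(3) (via Proposition~\ref{prop_C5}) and case~(1), and this bypasses the need to track arbitrary compositions.
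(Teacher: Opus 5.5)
Your argument is correct and has the same skeleton as the paper's proof. Only Examples~\ref{exa_whole} and~\ref{exa_imp} (with $k=2$, $m'=1$) have $m=2$, Constructions~\ref{exa_expansion} and~\ref{exa_product} are excluded, and what remains is Constructions~\ref{exa_ext} and~\ref{exa_lift} applied to the two base families.

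The ``main obstacle'' you describe does not arise. Constructions~\ref{exa_ext} and~\ref{exa_lift} preserve the dimension. Constructions~\ref{exa_expansion} and~\ref{exa_product} multiply it by $r$, respectively $t>1$. Non-standard codes have dimension at least $2$. Hence the dimension never decreases along a chain, and a chain ending at $m=2$ cannot use Construction~\ref{exa_expansion} or~\ref{exa_product} at any stage, not just the last one. This is exactly the one-line observation the paper uses.

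It makes your detour through the Aschbacher analysis unnecessary. That detour would also need more care than you indicate. For $m=2$ the main proof also passes through $\cC_3$, $\cC_6$, $\cS$ and the reduction of Lemma~\ref{lem_ordbarpsi}. Moreover, the $\cC_5$ branch needs induction on $q$, because Proposition~\ref{prop_C5} returns a lift-then-extend of a dimension-$2$ code over a subfield, which may be of type (2) rather than (1).

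Your explicit closure computations are a good substitute for the paper's use of Proposition~\ref{prop_C5} to reorder chains as ``lift, then extend''. They show that family (2) is mapped into itself by extension and by lifting with odd $t$, using that $(q^t-1)/(q-1)$ is odd. For family (3), closure and the scalar factor $C_{k(q_0-1)}$ both rest on $n_0=k(q_0-1)$. This follows from $\gcd\bigl(q_0+1,(q-1)/(q_0-1)\bigr)=1$ for odd $t$, which you should state.

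In case (2), do not appeal to Aschbacher to show the group is no larger. Proposition~\ref{prop_PreProof} and Lemma~\ref{lem_C6n3n0} alone would not rule out all conceivable overgroups. The upper bound already follows from your first observation:
\begin{itemize}
\item Since $n/n_0=2$, the group $\cL(n,q)$ permutes the two lines $\la 1\ra_{\F_q},\la\xi\ra_{\F_q}$. So it lies in $\GL_1(q)\wr S_2$ with respect to the basis $\{1,\xi\}$.
\item Since $\xi^2\in\F_q^*$ has order $n_0$, we have $\cU_{n,q}=\cU_{n_0,q}\cup\xi\,\cU_{n_0,q}$.
\item A diagonal or antidiagonal map stabilizes this set exactly when both its entries lie in $\cU_{n_0,q}$. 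This gives $\cL(n,q)\cong C_{n_0}\wr C_2$, of order $2n_0^2$, which is the paper's argument.
\end{itemize}
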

\begin{proof}
Suppose the code $C$ of dimension $m=2$ is one of the cases in Examples \ref{exa_repetition}-\ref{exa_imp}. The code $C$ in Example \ref{exa_whole} has length $q^2-1>3$ as given in Case (1). The code is not in Examples \ref{exa_repetition}, \ref{exa_C8} and \ref{exa_Golay} by the fact that $m=2$. Also, if the code is in Example \ref{exa_imp}, then $r=2$ and the triple $(n,n',2)$ satisfies the condition \eqref{eqn_nn0rCond}, so it occurs in Case (2). The group structure of $\cL(n,q)$ is described in the proof of Lemma \ref{lem_C2para}.  To be specific, let $\xi$ be a primitive $2n'$-th root of unity in $\F_{q^2}$. Then $\cL(n,q) $ stabilizes $\{\la 1\ra_{\F_{q}}, \la\xi\ra_{\F_q}\}$ pointwise.  Consider the action of $g \in \mathcal{L}(n, q)$ on $\cU_{n,q}$. Without loss of generality assume that $g(1)=1,\, g(\xi) = \lambda_1 \xi $ for some $\lambda_1 \in \mathbb{F}_q^* \cap \langle \xi \rangle$.   Then $g(\xi^i)=\xi^i$ or $\lambda_1\xi^i$ according as whether $i$ is even or not. This implies that  $\cL(n,q)=(C_{n_0})^2 \rtimes C_2$.

Let $C$ be an NSIC code over $\F_q$ that is obtained from Cases (1) and (2) by repeated uses of Constructions \ref{exa_ext}-\ref{exa_product}. As stated in the previous paragraph,  $C$ is not in Examples \ref{exa_repetition}-\ref{exa_imp}. Observe that the output of Construction \ref{exa_expansion} or \ref{exa_product} has a larger dimension than the input, so $C$ is obtained by repeated uses of Constructions \ref{exa_ext}  and $\ref{exa_lift}$. The output of Construction \ref{exa_ext} on Cases (1) and (2) is still in Cases (1) and (2). By Proposition \ref{prop_C5}, we assume without loss of generality that the code $C$ is obtained by applying first Construction \ref{exa_lift} and then Construction \ref{exa_ext}. Then there is a prime power $q_0$ such that $q=q_0^t$ with odd $t$. Since $C$ is not in Example \ref{exa_imp}, the input is not in Case (2). It follows that the original code $C'$ is from Case (1), which has length $q_0^2-1$.   Then the resulting code has length $k(q_0^2-1)$, where $k\mid \frac{q-1}{q_0-1}$. The group structure of $\cL(n,q)$ follows from the proof of Proposition \ref{prop_C5}, which is omitted here. This completes the proof.
\end{proof}

\subsection{The Schmidt-White conjecture}\label{sec:NSIC-2wt}

In this section, we verify the Schmidt-White conjecture for NSIC codes. Suppose that $q=p^f$ with $p$ prime, and let $n$ be a divisor of $q^m-1$ for some positive integer $m$. Take a primitive $n$-th root of unity $\xi$ in $\F_{q^m}$, and set $\cU_{n,q}=\{\xi^i:0\le i\le n-1\}$. Let $u=\frac{q^m-1}{n}$.  For each $\alpha\in\F_{q^m}$, define the vector
\begin{equation}
   c_{\alpha}=(\tr_{\F_{q^m}/\F_q}(\alpha),\tr_{\F_{q^m}/\F_q}(\alpha\xi),\ldots,\tr_{\F_{q^m}/\F_q}(\alpha\xi^{n-1})).
\end{equation}
Let $C(q,m,u)=\{c_\alpha:\alpha\in\F_{q^m}\}$, cf. \cite[Definition 2.2]{Schmidt2002}. All irreducible cyclic codes of length $n$ over $\F_q$ are permutationally equivalent to $C(q,m,u)$. For a codeword $c$, we write $w_H(c)=\#\{1\le i\le n:c_i\ne0\}$ for the Hamming weight of $c$. We have 
\begin{equation*}
 w_H(c_\alpha)=n-\#\{x\in\cU_{n,q}:\tr_{\F_{q^m}/\F_q}(\alpha x)=0\}.
\end{equation*}
A code is called an $N$-weight code if the set of its nonzero Hamming weights has cardinality $N$. We say that the code $C(q,m,u)$ is \textit{a subfield code} if $\cU_{n,q}\cdot\F_q^*\cup\{0\}$ is a subfield of $\F_{q^m}$, and we say that $C(q,m,u)$ is \textit{a semiprimitive code} if there is an integer $j$ such that $p^j\equiv -1\pmod{u_\Delta}$, where $u_\Delta=\gcd\left(u,\frac{q^m-1}{q-1}\right)$, cf. \cite[Definition 6]{Vega15}. By \cite{DingYang2013DM}, the code $C(q,m,u)$ is a one-weight code if and only if $u_{\Delta}=1$, i.e., $\F_q^*\cdot\cU_{n,q}=\F_{q^m}^*$.

The Schmidt-White conjecture asserts that a one- or two-weight irreducible cyclic code $C(p,m,u)$ is either a subfield code, a semiprimitive code, or belongs to an exceptional set of eleven codes that appears in \cite[Table 1]{Schmidt2002}, cf. \cite[Conjecture 4.4]{Schmidt2002}. Vega gave a critical review of one- and two-weight irreducible cyclic codes in \cite{Vega15}, and most importantly, he modified the definition of semiprimitive codes so that the conjecture holds for the two-weight irreducible cyclic codes constructed in \cite{DingYang2013DM} and \cite{Rao2010}.  The one-weight irreducible cyclic codes are well understood, so we focus on the two-weight cases. The reformulated version of the Schmidt-White Conjecture asserts that a two-weight irreducible cyclic code is either a semiprimitive code or appears in \cite[Table 1]{Schmidt2002} up to certain equivalence as explained in \cite[Remark 5]{Vega15}, cf. \cite[Conjecture 2]{Vega15}. We shall refer to \cite[Conjecture 2]{Vega15} as the Schmidt-White conjecture. In this subsection, we verify it for NSIC codes.

\begin{lemma}\label{lem_TwoweightFqt}
Suppose that $\ord_n(q)=m$ and $n_0=\gcd(n,q-1)$. If $\gcd(m,t)=1$ and $\min\{m,t\}\ge 2$, then the code $C(q^t,m,u')$ with $u'=\frac{q^{mt}-1}{n}$ has exactly two nonzero weights if and only if $\min\{m,t\}=2$ and $n=n_0\frac{q^m-1}{q-1}$  such that $\gcd\left(\frac{q-1}{n_0},\frac{q^m-1}{q-1}\right)=1$. Furthermore, if $C(q^t,m,u')$ is a two-weight code,  then $C(q,m,\frac{q-1}{n_0})$ is a one-weight code and $C(q^t,m,u')$ is a semiprimitive code.
\end{lemma}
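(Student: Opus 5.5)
The plan is to turn the weight question into a question about a point set in a subgeometry. Put $\cU=\cU_{n,q}\subseteq\F_{q^m}^*$ and identify $\F_{q^{mt}}=\F_{q^m}\otimes_{\F_q}\F_{q^t}$; this is possible because $\gcd(m,t)=1$ and $\ord_n(q^t)=m$. For $\alpha\in\F_{q^{mt}}^*$, the weight $w_H(c_\alpha)$ equals $n$ minus the number of $x\in\cU$ lying in the $\F_{q^t}$-hyperplane $\ker\tr_{\F_{q^{mt}}/\F_{q^t}}(\alpha\,\cdot)$.

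Since $\gcd(m,t)=1$, we have $\cU\cap\F_{q^t}^*=\cU\cap\F_q^*$, which has order $n_0$. So $\cU$ covers $n_1=n/n_0$ projective points of $\mathrm{PG}(m-1,q^t)$, each exactly $n_0$ times. All of these points lie in the subgeometry $\Sigma=\mathrm{PG}(m-1,q)$ coming from $\F_{q^m}$. Call this point set $S\subseteq\Sigma$. The code is a two-weight code if and only if the big hyperplanes meet $S$ in exactly two distinct cardinalities.

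\textbf{Step 1: which subspaces of $\Sigma$ are cut out.} A hyperplane of $\mathrm{PG}(m-1,q^t)$ is given by an $\F_{q^t}$-linear functional. Restricted to $\F_{q^m}$, it has $\F_q$-rank $r$ equal to the $\F_q$-dimension of the span of its coefficients in $\F_{q^t}$. Hence it meets $\Sigma$ in an $\F_q$-subspace of codimension $r$, where $1\le r\le s:=\min\{m,t\}$. I will check that every $\F_q$-subspace of every codimension $r\le s$ arises in this way: choose $r$ $\F_q$-independent coefficients in $\F_{q^t}$ and compose with an $\F_q$-basis change. So the code has two weights if and only if $S$ meets all $\F_q$-subspaces of $\Sigma$ of codimension $1,\dots,s$ in only two sizes, say $a<b$. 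Codimension $s=m$ means trivial intersection, which contributes the value $0$.

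\textbf{Step 2: the key combinatorial step (the main obstacle).} Here I must show that this forces $S=\Sigma$ and $s=2$. Since $s\ge2$, the argument uses codimension-$1$ and codimension-$2$ subspaces together. For a codimension-$2$ subspace $W$, summing over the $q+1$ hyperplanes $H\supset W$ gives
\[
\sum_{H\supset W}|H\cap S|=|S|+q\,|W\cap S| .
\]
Both $|W\cap S|$ and each $|H\cap S|$ lie in $\{a,b\}$. I would play this identity against the averages: the codimension-$r$ average is $|S|\,(q^{m-r}-1)/(q^m-1)$, and these differ for different $r$. The aim is to force one of the two values to be the trivial one, i.e.\ $S$ meets every subspace in all of its points, so $S=\Sigma$.

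I expect the case distinctions here to be the delicate part. I would first try to show that no codimension-$2$ subspace can realize $b$, since the identity then gives $|S|\le b$, which is absurd unless $S$ is small. After that, pin down $a$.

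Once $S=\Sigma$, a codimension-$r$ subspace meets $S$ in exactly $(q^{m-r}-1)/(q-1)$ points, giving $s$ distinct values. So two weights means exactly $s=2$. Conversely, $s=2$ and $S=\Sigma$ give two weights. The condition $S=\Sigma$ means $\F_q^*\cU=\F_{q^m}^*$, that is $n_1=(q^m-1)/(q-1)$, i.e.\ $n=n_0\frac{q^m-1}{q-1}$. Given this, and since $\gcd(n,q-1)=n_0$ forces $\gcd\!\big(\frac{q-1}{n_0},\frac{q^m-1}{q-1}\big)=1$, this is the stated arithmetic condition. It is also the statement that $u_\Delta=1$ for $C(q,m,\frac{q-1}{n_0})$, so that code is one-weight by \cite{DingYang2013DM}.

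\textbf{Step 3: semiprimitivity.} With $n=n_0(q^m-1)/(q-1)$, compute $u'_\Delta=\gcd\!\big(u',\frac{q^{mt}-1}{q^t-1}\big)$ for $u'=\frac{q^{mt}-1}{n}$. It divides
\[
\frac{(q^{mt}-1)(q-1)}{(q^m-1)(q^t-1)} .
\]
If $m=2$, this equals $(q^t+1)/(q+1)$ and $q^t\equiv-1$ modulo it. If $t=2$, it equals $(q^m+1)/(q+1)$ and $q^m\equiv-1$. In both cases some power of $p$ is $\equiv-1\pmod{u'_\Delta}$, so $C(q^t,m,u')$ is semiprimitive.
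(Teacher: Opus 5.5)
\paragraph{The gap: Step~2 is only sketched, and its target conclusion is false for $m=2$.}
Your Step~1 reduction is correct: the nonzero weights are $n-n_0|S\cap W|$, where $W$ runs over all $\F_q$-subspaces of $\Sigma$ of codimension $1,\dots,s$. Step~3 is also fine, including your observation that the gcd condition is automatic once $n=n_0\frac{q^m-1}{q-1}$ and $n_0=\gcd(n,q-1)$. The problem is Step~2, where you aim to prove $S=\Sigma$.

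When $m=2$, $t\ge 3$ is odd and $\Sigma=\mathrm{PG}(1,q)$. The codimension-$1$ subspaces are single points and the codimension-$2$ subspace is empty. So the intersection numbers are $\{0,1\}$ for \emph{every} nonempty $S$, and every such code is two-weight.

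Concretely, take $q=3$, $n=4$, $t=3$; all hypotheses hold. Since $\xi^2=-1$ and $\{1,\xi\}$ is an $\F_{27}$-basis of $\F_{3^6}$, we get $C(27,2,182)=\{(a,b,-a,-b): a,b\in\F_{27}\}$. Its nonzero weights are $2$ and $4$, yet $n\neq n_0\frac{q^2-1}{q-1}=8$.

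Similarly, $q=5$, $n=3$, $t=3$ gives the two-weight code $\{(a,b,-a-b)\}$ over $\F_{125}$. However, $C(5,2,4)$ is not one-weight, since $\gcd(4,6)=2$.

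So for $m=2$ both the ``only if'' direction and the one-weight claim of the statement fail. Only semiprimitivity survives, because then $u'_\Delta=\gcd(u',q^t+1)$ divides $q^t+1$.

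The paper's own proof has the matching hole. From a smallest nonempty $S\in Y$ it obtains $s_1=|S\cap(i+S)|$ and $s_2=|S|$, and concludes that every member of $Y$ has size $s_2$. This overlooks that $\emptyset\in Y$ may realize $s_1=0$, which is exactly what happens when $m=2$.

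\paragraph{How to close Step~2 when $m\ge 3$.}
Here your first idea about codimension-$2$ subspaces does finish the argument.

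Suppose some codimension-$2$ subspace $W$ had $|W\cap S|=b$, the larger value. Then every hyperplane $H\supset W$ has $|H\cap S|=b$, hence $H\cap S=W\cap S$. These hyperplanes cover $\Sigma$, so $S\subseteq W$. This contradicts the fact that the $\F_q$-span of $\cU$ is $\F_q[\xi]=\F_{q^m}$.

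Hence every codimension-$2$ subspace meets $S$ in exactly $a$ points. Now fix a hyperplane $H\cong\mathrm{PG}(m-2,q)$, with $m-2\ge 1$. The set $S\cap H$ meets all hyperplanes of $H$ in the same number of points. Since the point--hyperplane incidence matrix is nonsingular, $S\cap H\in\{\emptyset,H\}$.

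The first alternative forces $a=0$, and the second forces $a=\frac{q^{m-2}-1}{q-1}\ge 1$. So the same alternative holds for every $H$. As $S\neq\emptyset$, this gives $S=\Sigma$.

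With this addition, your argument proves the lemma under the extra hypothesis $m\ge 3$. For $m=2$, the correct statement is that every such code is a semiprimitive two-weight code.
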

\begin{proof}
Let $u=\frac{q^m-1}{n}$ and $\Delta=\frac{q^m-1}{q-1}$. We have $\ord_n(q^t)=\ord_n(q)=m$ by the fact $\gcd(m,t)=1$, and $\cU_{n,q}=\cU_{n,q^t}$. For $\alpha\in\F_{q^{m}}$ and $x\in\cU_{n,q}$, we have $\tr_{\F_{q^{mt}}/\F_{q^t}}(\alpha x)=\tr_{\F_{q^m}/\F_q}(\alpha x)$. It follows that $c_{\alpha}=(\tr_{\F_{q^{mt}}/\F_{q^t}}(\alpha\xi^i):0\le i\le n-1)$ is in both $C(q,m,u)$ and $C(q^t,m,u')$. Therefore,  $C(q,m,u)$ is a subcode of $C(q^t,m,u')$. If $m=1$, then $C(q^t,m,u')$ is a one-weight code, so we assume $m\ge 2$ in the following.

Take an $\F_q$-basis $\alpha_1,\ldots,\alpha_t$ of $\F_{q^t}$, which is also an $\F_{q^m}$-basis of $\F_{q^{mt}}$. For a codeword $c_\beta\in C(q^t,m,u')$ with $\beta\in\F_{q^{mt}}$, define
$T_\beta=\{i\in\Z_n:\tr_{\F_{q^{mt}}/\F_{q^t}}(\beta \xi^i)=0\}$. The weight of $c_\beta$ is $n-|T_\beta|$. We have $T_0=\{0,1,\ldots,n-1\}$ and $T_{\beta\xi^{-i}}=i+T_{\beta}$, where $i+S=\{i+x:x\in S\}$ for a subset $S$ of $\Z_n$.  Similarly, for $\beta=\sum_{i=1}^t\lambda_i\alpha_i$ with $\lambda_i$'s in $\F_{q^m}$, we have $c_{\beta}=\sum_{i=1}^t\alpha_ic_{\lambda_i}$. The weight of $c_\beta$ is $n-|\bigcap_{i=1}^tT_{\lambda_i}|$ by the fact $\alpha_1,\ldots,\alpha_t$ is an $\F_q$-basis of $\F_{q^t}$.

We first assume that $C(q^t,m,u')$ is a two-weight code. Let $Y=\{T_{\alpha}:\alpha\in\F_{q^m}^*\}$, and let $Y'$ be the set consisting of the intersection of any $k$-subset of $Y$ with $1\le k\le t$. The elements of $Y'$ have two distinct sizes, say, $s_1,s_2$ with $s_1<s_2$. Since $T_{\alpha\xi^{-i}}=i+T_{\alpha}$ for $i\in\Z_n$, for any $S\in Y$, its translates $i+S$'s are also in $Y$. The elements of $Y'$ have two distinct sizes by the previous paragraph. If $Y\setminus\{\emptyset\}$ is not empty, take an element $S$ in $Y$ that has the smallest size. We can not have $S=\Z_n$, since it corresponds to the zero codeword $c_0$ and $T_0$ is not in $Y$. There is at least one $i\in\Z_n$ such that $S\ne i+S$, and so $s_1=|S\cap (i+S)|$ and $s_2=|S|$. Therefore, all the elements of $Y$ have size $s_2$, i.e., $C(q,m,u)$ is a one-weight code. If $Y=\{\emptyset\}$,  then $C(q,m,u)$ is a one-weight irreducible cyclic code, and we have $\gcd\left(u,\Delta\right)=1$ by \cite[Theorems 1 and 2]{Vega15}.   It follows that $u$ is a divisor of $q-1$ relatively prime to $\Delta$, so we have $n_0=\gcd\left(\frac{q-1}{u}\Delta,q-1\right)=\frac{q-1}{u}$. That is, $u=\frac{q-1}{n_0}$ and $n=n_0\frac{q^{m}-1}{q-1}$. We thus have $\cU_{n,q}\cdot\F_q^*=\F_{q^m}^*$ and $n_0=|\cU_{n,q}\cap \F_q^*|=\frac{q-1}{u}$. We have
\begin{align*}
|T_\lambda\cap T_{\lambda'}|&=\frac{1}{n_0}\#\{x\in\F_{q^m}^*:\tr_{\F_{q^{m}}/\F_{q}}(\lambda x)=0,\tr_{\F_{q^{m}}/\F_{q}}(\lambda' x)=0\}
\end{align*}
for $\lambda,\lambda'\in\F_{q^{m}}^*$. It follows that $c_\lambda$ has weight $w_1=n-\frac{q^{m-1}-1}{q-1}n_0=n_0q^{m-1}$ by letting $\lambda'=\lambda$. If $\lambda,\lambda'$ are linear independent over $\F_q$, then we deduce that $c_{\lambda\alpha_1}+c_{\lambda'\alpha_2}$ has weight $w_2=n-\frac{q^{m-2}-1}{q-1}n_0=n_0q^{m-2}(q+1)$. If $t\ge 3$ and $m\ge 3$, then we get a codeword of weight $w_3=n-\frac{q^{m-3}-1}{q-1}n_0=n_0q^{m-3}(q^2+q+1)$ similarly: a contradiction.  This establishes the necessary part of the first claim.

Conversely, assume that $\min\{m,t\}=2$ and $n=n_0\Delta$  with $\gcd\left(\frac{q-1}{n_0},\Delta\right)=1$. Then $u=\frac{q^m-1}{n}=\frac{q-1}{n_0}$ which is relatively prime to $\Delta$. It follows that $\cU_{n,q}\cdot\F_{q}^*=\F_{q^m}^*$, so $C(q,m,u)$ is a subfield code. The set $\{n-|\bigcap_{i=1}^tT_{\lambda_i}|:\lambda_1,\lambda_t\in\F_{q^m}\}$ contains exactly two nonzero elements $w_1$ and $w_2$ by the preceding analysis, so $C(q^t,m,u')$ is a two-weight code.  Similarly, Let $u_\Delta'=\gcd\left(u',\frac{q^{mt}-1}{q^t-1}\right)$. If $t=2$, then $m$ is odd, $u'=(q^m+1)a$, and $u_\Delta'=\frac{q^m+1}{q+1}$. If $m=2$ and $t$ is odd, then $u'=\frac{q^{2t}-1}{q^2-1}a$ and $u_{\Delta}'=\gcd(u',q^t+1)$. In both cases, $C(q^t,m,u')$ is semiprimitive. This completes the proof.
\end{proof}

\begin{lemma}\label{lem_TwoweightC2}

Let $(n,n_0,r)$ be a triple that satisfies the conditions in \eqref{eqn_nn0rCond}, and set $m=\ord_n(q)$, $m_0=\ord_{n_0}(q)$, $u=\frac{q^m-1}{n}$ and $u_0=\frac{q^{m_0}-1}{n_0}$.  Then $C(q,m, u)$ is a two-weight code if and only if $C(q,m_0,u_0)$ is a one-weight code and $r=2$. Furthermore, if $C(q,m,u)$ is a two-weight code, then $C(q,m, u)$ must be a semiprimitive code.

\end{lemma}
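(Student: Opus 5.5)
The plan is to reduce $C(q,m,u)$ to a direct sum of $r$ copies of $C_0:=C(q,m_0,u_0)$, read off its weights from those of $C_0$, and then verify the semiprimitive condition arithmetically.

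\textbf{Step 1: the decomposition.} As in the proof of Lemma~\ref{lem_C2para}, I take $\xi\in\F_{q^m}$ of order $n=n_0r$. Then $\xi^r$ has order $n_0$, $\F_q(\xi^r)=\F_{q^{m_0}}$, and $\{1,\xi,\ldots,\xi^{r-1}\}$ is an $\F_{q^{m_0}}$-basis of $\F_{q^m}$ with $m=rm_0$. Moreover $\cU_{n,q}=\bigcup_{i=0}^{r-1}\xi^i\cU_{n_0,q}$. Write $\tr_m$ for $\tr_{\F_{q^m}/\F_q}$, $\tr_{m_0}$ for $\tr_{\F_{q^{m_0}}/\F_q}$, and $T:=\tr_{\F_{q^m}/\F_{q^{m_0}}}$. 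For $\alpha\in\F_{q^m}$ and $y\in\cU_{n_0,q}$, transitivity of the trace gives
\[
\tr_m(\alpha\xi^i y)=\tr_{m_0}(\beta_i y),\qquad \beta_i:=T(\alpha\xi^i).
\]
Since $T$ induces a nondegenerate $\F_{q^{m_0}}$-bilinear form on $\F_{q^m}$, the map $\alpha\mapsto(\beta_0,\ldots,\beta_{r-1})$ is a bijection $\F_{q^m}\to\F_{q^{m_0}}^r$. Reordering coordinates block by block, $C(q,m,u)$ is therefore permutation equivalent to $C_0^{\,r}$, with codewords $(c_{\beta_0},\ldots,c_{\beta_{r-1}})$ and the $\beta_i$ arbitrary. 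In particular
\[
w_H(c_\alpha)=\sum_{i=0}^{r-1} w_H(c_{\beta_i}).
\]

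\textbf{Step 2: the equivalence.} By Step 1, the nonzero weights of $C$ are exactly the sums of $r$ elements of $W_0\cup\{0\}$ that are not all zero, where $W_0$ is the set of nonzero weights of $C_0$.
\begin{itemize}
\item If $W_0=\{w\}$, the weights of $C$ are $w,2w,\ldots,rw$. This set has exactly two elements if and only if $r=2$.
\item If $W_0$ contains $w_1<w_2$, then, since $r\ge 2$, the weights $w_1<w_2<w_1+w_2$ all occur in $C$, so $C$ has at least three weights.
\end{itemize}
This proves the first claim.

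\textbf{Step 3: semiprimitivity.} Now assume $r=2$ and $C_0$ is one-weight, and set $\Delta_0:=\frac{q^{m_0}-1}{q-1}$. By the one-weight criterion of \cite{DingYang2013DM}, $\gcd(u_0,\Delta_0)=1$, so $u_0\mid q-1$. Next, $u=\frac{q^{2m_0}-1}{2n_0}=u_0\cdot\frac{q^{m_0}+1}{2}$ and
\[
\frac{q^m-1}{q-1}=\Delta_0\,(q^{m_0}+1).
\]
I will use condition~\eqref{eqn_nn0rCond} through Proposition~\ref{prop_nn0rChar}. Since $2\in\pi^*(n_0)$, we have $\nu_2(n_0)=\nu_2(q^{m_0}-1)$, so $u_0$ is odd. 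Then $\gcd(u_0,q^{m_0}+1)$ divides $\gcd(q-1,q^{m_0}+1)$, which divides $2$, so it equals $1$. Combined with $\gcd(u_0,\Delta_0)=1$, this shows that $u_0$ is coprime to $\frac{q^m-1}{q-1}$. Hence $u_\Delta=\gcd\bigl(u,\Delta_0(q^{m_0}+1)\bigr)$ divides $q^{m_0}+1$. Writing $q=p^f$, take $j=fm_0$; then $p^j=q^{m_0}\equiv-1\pmod{u_\Delta}$, so $C$ is semiprimitive in the sense of \cite[Definition 6]{Vega15}.

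The main obstacle is Step 3. The gcd bookkeeping, especially the factor $2$ and the parity of $u_0$, has to come out right. The key input is $\nu_2(n_0)=\nu_2(q^{m_0}-1)$, which follows from Proposition~\ref{prop_nn0rChar}. If that is insufficient, the fallback is to show only that the odd part of $u_\Delta$ divides $q^{m_0}+1$ and handle the $2$-part separately.
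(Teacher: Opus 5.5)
Your proof is correct and follows essentially the same route as the paper. You split $C(q,m,u)$ via the trace (dual-basis) decomposition into $r$ blocks, each a copy of $C(q,m_0,u_0)$, so weights add, and you then show $q^{m_0}\equiv -1 \pmod{u_\Delta}$. The only small difference is in Step 3: you use that $u_0$ is odd (forced by $\ord_{2n_0}(q)=2m_0$) to get $u_\Delta=(q^{m_0}+1)/2$, whereas the paper writes $u_\Delta=\frac{1}{2}\gcd(u_0,2)(q^{m_0}+1)$, which divides $q^{m_0}+1$ whatever the parity of $u_0$, so your parity argument is correct but not needed.
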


\begin{proof}

Let $E=\F_{q^m}$ and $F=\F_{q^{m_0}}$ in this proof. Take an element $\xi$ of order $n$ in  $E$. We have $E=\F_q[\xi]$, $F=\F_q[\xi^r]$, and $1,\xi,\ldots,\xi^{r-1}$ form an $F$-basis of $E$. Take a dual basis $\gamma_0,\ldots,\gamma_{m-1}$ such that $\tr_{E/F}(\xi^i\gamma_j)=\delta_{ij}$, where $\delta_{ij}=1$ if $i=j$ and $=0$ otherwise. We have $\cU_{n,q}=\la\xi \ra$ and $\cU_{n_0,q}=\la \xi^r\ra$. For an element $\alpha=\sum_{i=0}^{m-1}\lambda_i\gamma_i$ with $\lambda_i$'s in $\F_{q^{m_0}}$, we have
\[
\Tr_{E/F}(\alpha\xi^{i+rj})=\Tr_{F/\F_q } \big( \xi^{rj} \cdot \Tr_{E/F}(\alpha \xi^i) \big)=\Tr_{F/\F_q }(\lambda_i\xi^{rj}).
\]

It follows that the Hamming weight of the codeword $c_{\alpha}$ in $C(q,m,u)$ is

\begin{equation}\label{eqn_tt123}
    \sum_{i=0}^{r-1}\left(n_0- \#\{ x\in \cU_{n_0,q}: \Tr_{F/\F_q}(\lambda_i x)=0\} \right).
\end{equation}
It is the sum of the Hamming weights of $r$ codewords in $C(q,m_0,u_0)$.

We first assume that $C(q,m, u)$ is a two-weight code. By specifying all but one $\lambda_i$'s as $0$ in \eqref{eqn_tt123}, we deduce that $C(q,m_0,u_0)$ has at most two weights. By specifying all but two $\lambda_i$'s as $0$, we deduce that $C(q,m_0,u_0)$ is a one-weight code. If its nonzero weight is $n_0-w'$, then the two nonzero weights of $C(q,m,u)$ are $n_0-w'$, $2(n_0-w')$. If $r\ge 3$, then deduce that $C(q,m,u)$ contains a codeword of a third weight: a contradiction. Hence, we have $r=2$.

We next assume that $C(q,m_0,u_0)$ is a one-weight code and $r=2$. Let $\Delta=\frac{q^{m}-1}{q-1}$, $\Delta_0=\frac{q^{m_0}-1}{q-1}$ and $u_{\Delta}=\gcd(u, \Delta)$. It holds that $u=u_0(q^{m_0}+1)/2$, $\Delta=\Delta_0(q^{m_0}+1)$ by the fact $m=2m_0$, so $u_\Delta=\frac{1}{2}\gcd(u_0,2\Delta_0)(q^{m_0}+1)$. Since $C(q,m_0,u_0)$ is a one-weight code, we have $\gcd(u_0,\Delta_0)=1$ by \cite[Theorem 1]{Vega15}. It follows that $u_\Delta=\frac{1}{2}\gcd(u_0,2)(q^{m_0}+1)$, so $q^{m_0}\equiv -1\pmod{u_\Delta}$. Therefore, the code $C(q, m, u)$ is semiprimitive. This completes the proof.
\end{proof}

\begin{lemma}\label{lem_TwoWeightTensor}
Suppose that $\ord_n(q)=m>1$, $\ord_s(q)=t>1$ and $\gcd(m,t)=1$. Let \[n'=\lcm(n,s),\quad
u'=\frac{q^{mt}-1}{n'},\quad  u_1=\frac{q^m-1}{n},\quad  u_2=\frac{q^t-1}{s}.\] Then $C(q,mt,u')$ is a two-weight code if and only if  one of the following holds:
\begin{enumerate}
    \item[(1)] $\min\{m,t\}=2$ and both $C(q,m,u_1)$, $C(q,t,u_2)$ are one-weight codes;
    \item[(2)] one of $C(q,m,u_1)$, $C(q,t,u_2)$ is a two-weight code of dimension $2$ and the other is a one-weight code,
\end{enumerate}
and $C(q, mt, u')$ is a semiprimitive code in both cases.
\end{lemma}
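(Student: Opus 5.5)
We plan to mimic the proofs of Lemmas \ref{lem_TwoweightFqt} and \ref{lem_TwoweightC2}: write the codewords of $C(q,mt,u')$ as combinations of codewords of the two factor codes, and then read off the weights. Set $E_1=\F_{q^m}$, $E_2=\F_{q^t}$ and $E=\F_{q^{mt}}=E_1E_2$. Since $\gcd(m,t)=1$, we have $E\cong E_1\otimes_{\F_q}E_2$. Let $\xi_1$ generate $\cU_{n,q}\le E_1^*$ and $\xi_2$ generate $\cU_{s,q}\le E_2^*$. As in the proof of Construction \ref{exa_product}, $\cU_{n',q}=\cU_{n,q}\cU_{s,q}$, and every element can be written as $xy$ with $x\in\cU_{n,q}$, $y\in\cU_{s,q}$. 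This representation is unique up to a common factor in $\cU_{n,q}\cap\cU_{s,q}=\cU_{\gcd(n,s),q}\subseteq\F_q^*$, because the intersection of the two groups lies in $E_1\cap E_2=\F_q$.

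Every $\beta\in E$ can be written as $\beta=\sum_{j}\alpha_j\otimes\gamma_j$. We choose a rank-minimal expression, so the $\alpha_j\in E_1$ are $\F_q$-independent, the $\gamma_j\in E_2$ are $\F_q$-independent, and the number of terms $\rho$ satisfies $\rho\le\min\{m,t\}$. Then $\Tr_{E/\F_q}(\beta xy)=\sum_j\Tr_{E_1}(\alpha_jx)\Tr_{E_2}(\gamma_jy)$. So, up to the multiplicity $d=\gcd(n,s)$ in the parametrization, the codeword $c_\beta$ is a sum of $\rho$ tensor products $c_{\alpha_j}\otimes c_{\gamma_j}$. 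Hence the number of zero coordinates is $\frac{1}{d}\#\{(x,y):\sum_j a_j(x)b_j(y)=0\}$, where $a_j(x)=\Tr(\alpha_jx)$ and $b_j(y)=\Tr(\gamma_jy)$.

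For rank $\rho=1$ the weight is $w(c_{\alpha})w(c_{\gamma})/d$. We treat $C(q,mt,u')$ as a two-weight code and first show necessity. Taking $\rho=1$, the set of products $\{w_1w_2\}$, with $w_1$ ranging over the weights of $C(q,m,u_1)$ and $w_2$ over those of $C(q,t,u_2)$, has size at most $2$. So at most one factor code has two weights and the other has one weight, or both have one weight.

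Suppose first that both factor codes have one weight. In that case $\cU\cdot\F_q^*$ is the full multiplicative group of the respective field, and we can count the higher-rank weights exactly, as in Lemma \ref{lem_TwoweightFqt}. For each fixed $y$, the vector $(b_j(y))_j$ is uniformly distributed over $\F_q^{\rho}$ (up to the scalar $\F_q^*$-orbits), and the condition $\sum_j a_j(x)b_j(y)=0$ cuts out a hyperplane in $x$. This yields exactly $\min\{m,t\}$ distinct nonzero weights, one for each $\rho=1,\dots,\min\{m,t\}$. Two weights therefore force $\min\{m,t\}=2$, which is case (1).

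Suppose next that one factor, say $C(q,m,u_1)$, has two weights. Then the ranks $\rho\ge 2$ must add no further weight. We will show that any $\rho\ge2$ combination produces a third weight unless $m=2$. The idea is that a rank-$2$ combination averages the two weights of $C(q,m,u_1)$, in the same way as the Parseval-style counting in Lemma \ref{lem_TwoweightC2}. This gives case (2).

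For sufficiency, we simply compute the weights in each case by the same counting.

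For semiprimitivity, we compute $u'_\Delta=\gcd\bigl(u',\frac{q^{mt}-1}{q-1}\bigr)$ and exhibit $j$ with $p^j\equiv-1\pmod{u'_\Delta}$. In case (1), say $m=2$ with $t$ odd, we expect $u'_\Delta$ to divide $q^t+1$, and we take $p^{j}=q^t$. In case (2), the two-weight factor of dimension $2$ is semiprimitive, with $p^{j_0}\equiv-1$ modulo its own $u_\Delta$. We then lift this congruence using $\gcd(m,t)=1$, taking $j=j_0t$ with $t$ odd, via Lemma \ref{lem_basic}.

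The main obstacle is the step in case (2) showing that rank $\ge2$ combinations create a third weight when the two-weight factor has dimension greater than $2$. To handle it, we will first compute the sum of weights over the $\F_q^*$-orbit for rank $2$ and compare it with the rank-$1$ values.
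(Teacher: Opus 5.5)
Your setup is sound: the factorization $\F_{q^{mt}}\cong\F_{q^m}\otimes_{\F_q}\F_{q^t}$, the trace splitting, and the rank-one weights $w(c_\alpha)w(c_\gamma)/d$ all work. In the case where both factors are one-weight, your count is sharper than the paper's. The number of zero pairs of a rank-$\rho$ bilinear form depends strictly on $\rho$, so every rank up to $\min\{m,t\}$ gives its own weight; the paper only exhibits a third weight from a rank-$3$ combination.

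\textbf{The gap.} The case where one factor, say $C(q,m,u_1)$, is two-weight is not proved. You call it the ``main obstacle'', and the averaging you sketch does not finish it.

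The missing ingredient is the following. Since $C(q,t,u_2)$ is one-weight, $\cU_{s,q}\F_q^*=\F_{q^t}^*$, so the number of $y\in\cU_{s,q}$ with $\Tr(zy)=0$ is the same constant $w'$ for every $z\in\F_{q^t}^*$. Hence, for $\F_q$-independent $\gamma,\gamma'$, the codeword attached to $\alpha\otimes\gamma+\alpha'\otimes\gamma'$ has weight $\frac1d(s-w')\bigl(n-|T_\alpha\cap T_{\alpha'}|\bigr)$, where $T_\alpha$ is the zero set of $c_\alpha$. Two-weightness then forces $n-|T_\alpha\cap T_{\alpha'}|\in\{w_1',w_2'\}$ for all $\F_q$-independent $\alpha,\alpha'$.

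Your averaging over the line $\langle\alpha,\alpha'\rangle$ rewrites $n-|T_\alpha\cap T_{\alpha'}|$ as $\frac1q$ times the sum of the $q+1$ weights on that line. That only shows that every line contains the same number $k_0\in\{1,\ldots,q\}$ of points of weight $w_1'$, with $w_2'=k_0(w_2'-w_1')$. It does not make every rank-two combination a new weight. You would still need to exclude $m\ge 3$, for instance by counting such points in a plane via the lines through a point of each kind, which gives the impossible equality $1+(q+1)(k_0-1)=(q+1)k_0$.

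The paper closes this step using the cyclic structure $T_{\alpha\xi_1^k}=-k+T_\alpha$:
\begin{itemize}
\item Take $\alpha'=\alpha\xi_1$ with $c_\alpha$ of weight $w_2'$. Then $T_\alpha\subseteq -1+T_\alpha$, so $T_\alpha$ is shift-invariant, hence empty, and $w_2'=n$.
\item For $m\ge 3$, pick $x\ne 0$ with $\Tr(x)=\Tr(x\xi_1)=0$. Then $c_x$ has weight $w_1'$. Since $0\in T_x\cap(-1+T_x)$, this forces $T_x=-1+T_x\in\{\emptyset,\Z_n\}$, a contradiction.
\end{itemize}

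\textbf{Two further points.} First, for sufficiency in case (2), ``the same counting'' has to use two facts. A two-weight code of dimension $2$ has weights exactly $n-n_0$ and $n$, because each $|T_\alpha|\in\{0,n_0\}$. Rank-two codewords have weight $\frac1d(s-w')n$, because $T_\alpha\cap T_{\alpha'}=\emptyset$ when $m=2$.

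Second, your semiprimitivity argument in case (2) does not work as stated. Raising $p^{j_0}\equiv -1$ to the odd power $t$ gives a congruence modulo the factor code's $u_\Delta$, not modulo $u'_\Delta$. Lemma~\ref{lem_basic}, which concerns $r$-adic valuations, does not bridge this. You need to compute $u'_\Delta$ itself.
\begin{itemize}
\item Take $m=2$ and $t$ odd. Then $u'_\Delta=[\F_{q^{2t}}^*:\cU_{n',q}\F_q^*]$.
\item Using $\cU_{s,q}\F_q^*=\F_{q^t}^*$ and $\cU_{n,q}\F_q^*\cap\F_{q^t}^*=\F_q^*$, this equals $\frac{q^t+1}{q+1}\gcd(u_1,q+1)$.
\item This divides $q^t+1$, so $j=ft$ works.
\item In case (1) the same computation gives $u'_\Delta=\frac{q^t+1}{q+1}$.
\end{itemize}
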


\begin{proof}
Let $d=\gcd(n,s)$, $n_0=\gcd(n,q-1)$, $s_0=\gcd(s,q-1)$. By the fact $\gcd(m,t)=1$, we deduce that $d$ divides $q-1$. Take an element $\xi_1$ of order $n$ in $\F_{q^m}$ and an element $\xi_2$ of order $s$ in $\F_{q^t}$. We have $\cU_{n',q}=\cU_{n,q}\cdot\cU_{s,q}$, and $\cU_{n,q}\cap\cU_{s,q}\subseteq\F_q^*$. We showed in the proof of Construction \ref{exa_product} that $\ord_{n'}(q)=mt$.\medskip

We first establish the necessity part. Assume that $C(q, mt,u')$ is a two-weight code. For $\alpha,\alpha'\in\F_{q^m}^*$, we define $T_\alpha=\{i\in\Z_n: \tr_{\F_{q^{m}}/\F_q}(\alpha \xi_1^i)=0\}$ and $n_{\alpha,\alpha'}=\#(T_\alpha\cap T_{\alpha'})$. It holds that $T_{\alpha\zeta_1^k}=-k+T_{\alpha}$ for $0\le k\le n-1$. The codeword $c_\alpha$ in $C(q,m,u_1)$ has the same weight as $c_{\alpha x}$ for $x\in\cU_{n,q}$. Take $\alpha,\alpha'\in\F_{q^m}^*$ and $\beta,\beta'\in\F_{q^t}^*$ such that $\F_q\alpha\ne\F_q\alpha'$, $\F_q\beta\ne\F_q\beta'$. By the condition $\gcd(m,t)=1$, we deduce that
\[
\tr_{\F_{q^{mt}}/\F_q}(\alpha\beta \xi_1^i\xi_2^j)
=\tr_{\F_{q^t}/\F_q}\left(\beta\xi_2^j\tr_{\F_{q^{mt}}/\F_{q^t}}(\alpha\xi_1^i)\right)
=\tr_{\F_{q^t}/\F_q}(\beta\xi_2^j)\cdot \tr_{\F_{q^{m}}/\F_q}(\alpha\xi_1^i).
\]
The weight of the codeword $c_{\alpha,\beta}$ is
\begin{align}
 \notag &n'-\frac{1}{d} \#\{(x,y)\in \cU_{n,q}\times \cU_{s,q}:\, \tr_{\F_{q^{m}}/\F_q}(\alpha x) \cdot\tr_{\F_{q^{t}}/\F_q}(\beta y)=0 \}\\
=&\frac{1}{d}\left(n-|T_\alpha|\}\right)\cdot \left(s-\#\{y\in\cU_{s,q}:\tr_{\F_{q^{t}}/\F_q}(\beta y)=0\}\right), \label{eqn_calbe}
\end{align}
which is $\frac{1}{d}$ of the product of two nonzero weights in $C(q,m,u_1)$ and $C(q,t,u_2)$ respectively. We deduce that at most one of those two codes can be a two-weight code, since otherwise $C(q,mt,u')$ would have at least three distinct weights. We assume without loss of generality that  $C(q,t,u_2)$ is a one-weight code. By \cite[Theorems 1 and 2]{Vega15}, we have $\gcd\left(u_2,\frac{q^t-1}{q-1}\right)=1$ and  $s=s_0\frac{q^t-1}{q-1}$ for some integer $s_0$, and the nonzero weight of  $C(q,t,u_2)$ is $s-w'=s_0q^{t-1}$ with $w'=\frac{q^{t-1}-1}{q-1}s_0$.

For $x\in\cU_{n,q}$, let $\eta_x=\tr_{\F_{q^{m}}/\F_q}(\alpha x)$ and $\eta_x'=\tr_{\F_{q^{m}}/\F_q}(\alpha' x)$. The weight of $c_{\alpha,\beta}+c_{\alpha',\beta'}$ is
\begin{align}
&n'-\frac{1}{d}\#\{(x,y)\in\cU_{n,q}\times\cU_{s,q}:\eta_x\tr_{\F_{q^t}/\F_q}(\beta y)+\eta_x'\tr_{\F_{q^t}/\F_q}(\beta'y)=0\}\notag\\
=&n'-\frac{1}{d}\sum_{x\in\cU_{n,q}}\#\{y\in\cU_{s,q}:\tr_{\F_{q^t}/\F_q}((\eta_x\beta+\eta_x'\beta')y)=0\}\notag\\
=&n'-\frac{1}{d}(sn_{\alpha,\alpha'}+w'(n-n_{\alpha,\alpha'}))
=\frac{1}{d}s_0q^{t-1}(n-n_{\alpha,\alpha'}),\label{eqn_tpcsum1}
\end{align}
We consider two cases according as $C(q,m,u_1)$ is a one-weight or two-weight code. First assume that it is a one-weight code. By \cite[Theorems 1 and 2]{Vega15}, we have $\gcd\left(u_1,\frac{q^m-1}{q-1}\right)=1$ and $u_1= \frac{q-1}{n_0}$, and so $\F_q^*\cdot\cU_{n,q}=\F_{q^m}^*$, $|\F_q^*\cap\cU_{n,q}|=n_0$. Also, the nonzero weight of $C(q,m,u_1)$ is $n_0q^{m-1}$. If $\F_q\alpha\ne\F_q\alpha'$, then we deduce that $|T_\alpha\cap T_{\alpha'}|=\frac{q^{m-2}-1}{q-1}n_0$. The codewords of the form $c_{\alpha,\beta}$ have weight $w_1=\frac{n_0s_0}{d}q^{m+t-2}$, and the codewords of the form $c_{\alpha,\beta}+c_{\alpha',\beta'}$ have weight  $w_2=\frac{n_0s_0}{d}{q^{m+t-3}}(q+1)$ by \eqref{eqn_tpcsum1}.  If $\min\{m,s\}\ge 3$, then take linearly independent elements $\alpha_1,\alpha_2,\alpha_3$ in $\F_{q^m}$ and  linearly independent elements $\beta_1,\beta_2,\beta_3$ in $\F_{q^s}$. The codeword $\sum_{i=1}^3 c_{\alpha_i\beta_i}$ has weight distinct from $w_1,w_2$: a contradiction. Therefore, we must have $\min\{m,s\}=2$. We thus have case (1) when $C(q,m,u_1)$ is a one-weight code.

We next assume that $C(q,m,u_1)$ is a two-weight code. Let $w_1'$, $w_2'$ be its nonzero weights and assume that $w_2'>w_1'$.  For $i=1,2$, let $\alpha_i$ be a nonzero element in $\F_{q^m}$ such that the codeword $c_{\alpha_i}$ in $C(q,m,u_1)$ has weight $w_i'$. That is, $n-|T_{\alpha_i}|=w_i'$ for $i=1,2$. For each $x\in\F_{q^m}^*$, the codeword $c_x$ in $C(q,m,u_1)$ has weight $n-|T_x|\in\{w_1',w_2'\}$. A codeword of the form $c_{\alpha,\beta}$ with $\alpha\in\F_{q^m}^*$ and $\beta\in\F_{q^t}^*$ has weight $\frac{1}{d}s_0q^{t-1}w_1'$ or $\frac{1}{d}s_0q^{t-1}w_2'$  by \eqref{eqn_calbe}, and both weights occur. Since $C(q,mt,u')$ has exactly two nonzero weights, we deduce from \eqref{eqn_tpcsum1} that $n-n_{\alpha,\alpha'}\in\{w_1',w_2'\}$ provided that $\F_q\cdot\alpha\ne\F_q \cdot\alpha'$. By taking $(\alpha,\alpha')=(\alpha_2\xi_1^i,\alpha_2\xi_1^{i+1})$ and using the facts $w_2'>w_1'$ and $n_{\alpha,\alpha'}\le|T_{\alpha}|$, we deduce that $n-n_{\alpha,\alpha'}=w_2'$, so $T_{\alpha_2 \xi_1^{i}}\subseteq T_{\alpha_2\zeta_1^{i+1}}$ for each $i$. It follows that  $T_{\alpha_2}=-1+T_{\alpha_2}$, i.e., $T_{\alpha_2}=\emptyset$ or $\Z_n$. Since $w_2'>0$, we have $T_{\alpha_2}=\emptyset$ and $w_2'=n$. For $1\le k\le n-1$, we similarly deduce that either $T_{\alpha_1}=-k+T_{\alpha_1}$ or $T_{\alpha_1}\cap -k+T_{\alpha_1}=\emptyset$. If $m\ge 3$, then there is $x\in\F_{q^m}^*$ such that $\Tr_{\F_{q^m}/\F_q}(x)=0$, $\Tr_{\F_{q^m}/\F_q}(x\xi_1)=0$ by linear algebra. It follows that $|T_x|\ne 0$ and thus $n-|T_x|=w_1'$. By specifying $\alpha_1=x$ and using the fact $\{0,1\}\subseteq T_x$, we deduce that $T_{\alpha_1}=-1+T_{\alpha_1}$. We similarly have $T_{\alpha_1}\in\{\emptyset,\Z_n\}$, and both cases lead to a contradiction to the fact that $C(q,m,u_1)$ is a two-weight code. Hence we have $m=2$. Take a nonzero element $\delta\in\F_{q^2}$ such that $\delta+\delta^q=0$. We have  $\{\xi_1^i:i\in T_\alpha\}=\alpha^{-1}\delta\cdot\F_q^*\cap\cU_{n,q}$, whose size is $0$ or $\gcd(n,q-1)$ according as $\alpha^{-1}\delta\in\F_q^* \cdot\cU_{n,q}$ or not. This is case (2) in this lemma.\medskip

We now establish the sufficiency part and show that $C(q, mt,u')$ is a semiprimitive code. In both cases, each nonzero codeword is either of the form $c_{\alpha,\beta}$ or of the form $c_{\alpha,\beta}+c_{\alpha',\beta'}$, where $\F_q\cdot \alpha\ne\F_q\cdot\alpha'$ and $\F_q\cdot\beta\ne\F_q \cdot\beta'$. The weight of such a codeword is either \eqref{eqn_calbe} or \eqref{eqn_tpcsum1}, and it takes exactly two distinct values by the same arguments as those for the necessity part. We conclude that $C(q, mt,u')$ is a two-weight code, and this establishes the sufficiency part. We assume without loss of generality that $m=2$ in both cases. For case (1), we deduce from $\gcd(u_1,q+1)=1$ and $\gcd(u_2,\frac{q^t-1}{q-1})$ that
\[
u'_{\Delta}=\gcd\left(u', \frac{q^{2t}-1}{q-1}\right)=\frac{q^t+1}{q+1}\gcd\left( \frac{q-1}{\lcm(n_0,s_0)},\frac{q^{t}-1}{q-1} \right)=\frac{q^t+1}{q+1}.
\]
It follows that $p^{t}\equiv -1 \pmod{u_{\Delta}}$. So $C(q,mt,u')$ is semiprimitive. For (2), we similarly deduce that \[u'_\Delta=\gcd(u', \frac{q^{2t}-1}{q-1})=\frac{(q^t+1)n_0}{n} \gcd(\frac{q-1}{\lcm(s_0,n_0) },\, \frac{q^t-1}{q-1} \cdot\frac{n}{n_0})\] is a divisor of $q^t+1$ by the fact $\gcd(u_1, \frac{q^t-1}{q-1})=1$ and $\frac{q-1}{\lcm(s_0,n_0)}\mid u_1$, so $q^t\equiv -1\pmod{u'_\Delta}$ also holds. Therefore, $C(q, mt,u')$ is a semiprimitive code in both cases. This completes the proof.
\end{proof}

\begin{thm}
  The Schmidt-White conjecture holds for NSIC codes.
\end{thm}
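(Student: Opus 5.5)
We use \Tm{thm_mainCode}: every NSIC code is one of Examples~\ref{exa_repetition}--\ref{exa_imp}, or is obtained from one of them by a finite sequence of Constructions~\ref{exa_ext}--\ref{exa_product}. We argue by induction on the length of this sequence. The claim at each stage is that a two-weight code which occurs is either semiprimitive or lies in \cite[Table 1]{Schmidt2002}, up to the equivalence of \cite[Remark 5]{Vega15}. Since the weight set is an invariant of permutation equivalence and depends only on $(n,q)$, we can always work with the model $C(q,m,u)$.

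For the base cases we compute the weights of the four families directly. Example~\ref{exa_repetition} gives the dual of a repetition code of prime length $n\ge5$. Its weights are $n-1$ and the weights $\ge 2$ of even-type words, so it has many weights and is never two-weight. Example~\ref{exa_whole} has $n=q^m-1$, so $u_\Delta=1$ and the code is one-weight. In Example~\ref{exa_C8}, $\cU_{n,q}$ is the set of nonzero points of an elliptic quadric in $\F_{q^4}$. A hyperplane meets it in a conic or a point, so there are exactly two weights, and we check that $u=(q+1)$ gives $u_\Delta=q+1$ with $q\equiv -1\pmod{u_\Delta}$, which is semiprimitive. For Example~\ref{exa_Golay}, the duals of the Golay codes (the $[23,11]$ binary and $[11,5]$ ternary codes) have known weight enumerators with several weights, or else they appear in Table~1. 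Example~\ref{exa_imp} is handled entirely by Lemma~\ref{lem_TwoweightC2}: a two-weight code there is semiprimitive.

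For the inductive step we go through the constructions one at a time. Construction~\ref{exa_ext} replaces $\cU_{n,q}$ by $\cU_{n,q}\cdot H$ with $H\le\F_q^*$. Each codeword weight is then multiplied by the constant $u'$-factor $|H|/\gcd(n,q-1)$, so the number of weights is unchanged. We then check that the semiprimitivity condition on $u_\Delta$ is preserved, since $u_\Delta$ depends only on $\F_q^*\cU_{n,q}$. The same observation covers membership in Table~1 up to Vega's equivalence. Construction~\ref{exa_lift} is handled by Lemma~\ref{lem_TwoweightFqt}, which forces a two-weight output to be semiprimitive. Construction~\ref{exa_product} is handled by Lemma~\ref{lem_TwoWeightTensor}, with the same conclusion.

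The main obstacle is Construction~\ref{exa_expansion}, the descending method. Taking traces from $\F_{q^r}$ to $\F_q$ changes the weights in a nontrivial way, and no lemma above covers it. We plan to avoid it: by the remark after Construction~\ref{exa_product}, and by the structure of the proof of \Tm{thm_mainCode} (Proposition~\ref{prop_C3}), the codes produced by descending are exactly the codes $C(q,m,u)$ with the same $\cU_{n,q}$ as some NSIC code over $\F_{q^r}$. Since the weights of $C(q,m,u)$ are determined by $\cU_{n,q}$ through $\F_q^*\cU_{n,q}$, we instead apply the Schmidt--White characterisation \cite{Schmidt2002}, which is a condition on Gauss sums depending only on $u_\Delta$ and $p$. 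The aim is to show that the two-weight property and semiprimitivity descend compatibly: when $q$ is replaced by $q^r$ with the same $n$, $u_\Delta$ computed over $\F_q$ divides the one computed over $\F_{q^r}$, and $p^j\equiv-1$ passes to divisors. If this reduction fails in general, the fallback is to classify the descended outputs of each base family separately. That is feasible, because descending changes neither $n$ nor the group-theoretic type, and we can then rerun the base-case computations with $q$ replaced by a subfield.
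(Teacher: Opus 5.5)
Your overall plan matches the paper's. You induct along the generation sequence of Theorem~\ref{thm_mainCode}, settle the base families, and use Lemmas~\ref{lem_TwoweightFqt}, \ref{lem_TwoweightC2}, \ref{lem_TwoWeightTensor} for Constructions~\ref{exa_lift}, \ref{exa_product} and Example~\ref{exa_imp}. You handle Construction~\ref{exa_ext} correctly via $u_\Delta=[\F_{q^m}^*:\F_q^*\cU_{n,q}]$.

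\textbf{The gap: descending.} The gap is Construction~\ref{exa_expansion}, where your reduction runs the wrong way.
\begin{itemize}
\item By your own index description, $\F_q^*\cU_{n,q}\le\F_{q^r}^*\cU_{n,q}$. So the value for $C_0$ over $\F_{q^r}$, namely $\gcd\bigl(u,\frac{q^m-1}{q^r-1}\bigr)$, divides the value $\gcd\bigl(u,\frac{q^m-1}{q-1}\bigr)$ for $C$ over $\F_q$, not conversely. For $n=5$, $q=2$, $r=2$, $u=3$ these are $1$ and $3$.
\item Hence ``$p^j\equiv-1$ passes to divisors'' only gives $C$ semiprimitive $\Rightarrow$ $C_0$ semiprimitive. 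Your induction needs the converse.
\item You also never show that $C$ two-weight forces $C_0$ two-weight, which you need before the inductive hypothesis says anything about $C_0$. A Gauss-sum criterion in terms of $u_\Delta$ and $p$ cannot give this, because $u_\Delta$ is exactly what changes. With the same parameters, $C_0=C(4,2,3)$ is the one-weight $[5,2,4]$ code over $\F_4$, while its trace code $C(2,4,3)$ is the two-weight binary even-weight code.
\item Your fallback does not close this. Descending can be applied to outputs of Constructions~\ref{exa_ext}, \ref{exa_lift}, \ref{exa_product}, not only to base families.
\item With your hypothesis ``in Table~1 up to Vega's equivalence'', you would also have to show that descending a Table~1 code stays in Table~1 or becomes semiprimitive.
\end{itemize}

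\textbf{How the paper handles it.} The paper treats this step in one line. It cites Schmidt--White's Corollary~2.9 (with their Lemma~2.5): $C(q,m,u)$ is two-weight iff $C(p,mf,u)$ is. From this it concludes that $C$ and $C_0$ are simultaneously two-weight, and asserts that semiprimitivity is likewise preserved. It also uses the sharper inductive hypothesis ``semiprimitive, or one of the ternary Golay-related codes of length $11$ or $22$''; these cannot occur over $\F_{q^r}$ with $r\ge 2$.

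Be aware that the $n=5$ example shows this field-independence cannot hold for $u$ itself without an extra hypothesis (e.g.\ after passing to $u_\Delta$). So whichever route you take, you must control how $u_\Delta$ changes from $\F_{q^r}$ to $\F_q$, and prove $C_0$ semiprimitive $\Rightarrow$ $C$ semiprimitive by an actual argument.

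\textbf{Smaller points.}
\begin{itemize}
\item Example~\ref{exa_repetition} is two-weight for $(n,p)=(5,2)$ (weights $2,4$), so ``never two-weight'' is false. The paper notes this is the $q=2$ case of Example~\ref{exa_C8}, which is semiprimitive: $u=u_\Delta=3$ and $2\equiv-1\pmod 3$.
\item For Example~\ref{exa_Golay} you should commit rather than hedge. The binary dual $[23,11,8]$ has weights $8,12,16$. The ternary dual $[11,5,6]$ has weights $6,9$ and is not semiprimitive ($u_\Delta=11$). It is, together with its length-$22$ extension, the exceptional Table~1 case.
\end{itemize}
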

\begin{proof}

Let $\mathbf{C}_{11}$ be the dual of the ternary Golay code of length $11$, and let $\mathbf{C}_{22}$ be the ternary code of length $22$ obtained from $\mathbf{C}_{11}$ by Construction \ref{exa_ext}. The code in Example \ref{exa_repetition} is a two-weight code only when $(n,p)=(5,2)$, and it has more than two weights otherwise. This code of length $n=5$ over $\F_2$ is covered by Example \ref{exa_C8}, so we include it in Example \ref{exa_C8} rather than Example \ref{exa_repetition} for uniform treatment in this proof. Also, the codes in Example \ref{exa_whole} have only one nonzero weight, the code in Example \ref{exa_C8} has two nonzero weights and is semiprimitive, the binary Golay code has more than two weights, and the ternary Golay code is a two-weight code. By Lemma \ref{lem_TwoweightC2}, the code in Example \ref{exa_imp} is semiprimitive if it has exactly two nonzero weights.

By \cite[Corollary 2.9]{Schmidt2002}, $C(q,m,u)$ is a two-weight code if and only if $C(p,mf,u)$ is. As a corollary, if $r$ is a divisor of $m$, then $C(q,m,u)$ is a two-weight code if and only if $C(q^r,m/r,u)$ is. By \cite[Lemma 2.5]{Schmidt2002}, the code $C(q,m,u)$ is a two-weight code if and only if $C(q,m,u')$ is, where $u'=\frac{\gcd(n,q-1)}{q-1}u$. Therefore, in Constructions \ref{exa_ext} and \ref{exa_expansion}, the resulting codes are two-weight codes if and only if the original codes are. Moreover, the resulting codes are subfield codes or semiprimitive codes if and only if the original codes are. By Lemmas \ref{lem_TwoweightFqt} and \ref{lem_TwoWeightTensor}, if the resulting code $C$ in Construction \ref{exa_lift} or Construction \ref{exa_product} is a two-weight code, then $C$ is a semiprimitive code.

Let $C$ be an NSIC code over $\F_q$ that is obtained from Examples \ref{exa_repetition}-\ref{exa_imp} by repeated uses of Constructions \ref{exa_ext}-\ref{exa_product}, and assume that it is not semiprimitive, not (the dual of) the ternary Golay code $\mathbf{C}_{11}$ or the related code $\mathbf{C}_{22}$. We show that there is no such code by using induction on the number of iterations used to obtain $C$. By the first paragraph of this proof, $C$ is not one of  Examples \ref{exa_repetition}-\ref{exa_imp}. The last round is not Construction \ref{exa_lift} or Construction \ref{exa_product} by the previous paragraph. The input and output of Construction \ref{exa_ext} have the same number of nonzero weights, and one is semiprimitive if and only if the other is. Hence we assume without loss of generality that the last round is not Construction \ref{exa_ext}. Hence the last round is Construction \ref{exa_expansion}, and suppose that $C$ is obtained from $C'$ by Construction \ref{exa_expansion}.  The code $C'$ is a two-weight code over $\F_{q^r}$ for some $r\ge 2$, and it is not semiprimitive. Since $r\ge 2$, $C'$ is not $\mathbf{C}_{11}$ or $\mathbf{C}_{22}$. By induction, we know that there is no such code $C'$. This completes the proof.
\end{proof}

\subsection{The asymptotic behavior of the density of non-standard pairs}\

As observed in \cite{Hollmann2023}, there is a bijection between non-standard pairs $(n,q)$ and NSIC codes of length $n$ over $\F_q$. In the survey \cite{OpenPcode}, Charpin noted that the results of \cite{Beger1996AutECcode} suggest the conjecture that almost all cyclic codes are standard. Let $C$ be a non-standard irreducible non-degenerate cyclic code over $\F_q$. Then $C$ is uniquely determined by the pair $(n,q)$, and in this case $\ord_n(q)=\dim(C)$. 
For a prime power $p^i$, define the proportion of non-standard pairs $(n,p^i)$ with $n\leq N$ by
\[
   R_{p^i}(N) \;=\;
   \frac{\#\{\,n : 1\leq n\leq N,\; (n,p^i)\ \text{is non-standard},\ \gcd(n,p)=1 \,\}}
        {\#\{\,n : 1\leq n\leq N,\; \gcd(n,p)=1 \,\}}.
\]
The Berger-Charpin conjecture for non-degenerate irreducible cyclic codes can then be reformulated as
\[
   \lim_{N\to\infty} R_{p^i}(N)\;=0,\, \forall\ i\geq 1.
\]
However, computing explicit values of $R_{p^i}(N)$ for large $N$ appears to be difficult. For the case $p=2$, we have used Theorem \ref{thm_mainCode} to obtain numerical data of $R_{2^i}(N)$ for various $N$ and~$i$. The results are listed in Table~\ref{tab:ratios}.

\begin{table}[!htbp]
    \centering
    \begin{tabular}{c|c|c|c|c|c}\midrule
        $N$  & $10^3$ & $10^4$ & $5\cdot10^4$ & $10^5$ & $5\cdot 10^5$  \\ \hline
         $R_2(N)$ & $0.494$ & $ 0.45$ & $0.4232$ & $  0.41452$ &  $0.395636$  \\ \hline 
         $R_4(N)$ & $0.294$ & $0.2814$ & $  0.27036$ & $  0.26734$ & $0.259308$ \\ \hline
         $R_8(N)$ & $ 0.356$ & $ 0.3174$& $0.3002$ & $ 0.2946$ &  $ 0.281772$\\ \hline
         $R_{16}(N)$ & $ 0.282$ &  $0.2644$ & $0.25484$ & $ 0.25192$ & $ 0.244688$ \\ \hline
         $R_{32}(N)$ & $0.424$ & $ 0.3752$ & $0.35112$ & $ 0.34352$ & $ 0.327304$ \\  \hline
         $R_{64}(N)$ & $0.208$ & $ 0.1954$ & $0.18944$ & $0.18814$ & $0.183688$ \\ \midrule
    \end{tabular}
    \caption{Values of $R_{2^i}(N)$ computed by using \textsc{Magma} \cite{Magma}}
    \label{tab:ratios}
\end{table}
The numerical data in Table~\ref{tab:ratios} indicates that $R_{2^i}(N)$ decreases as $N$ grows, for each fixed $i$. This suggests that the density of non-standard pairs tends to zero, which aligns with the Berger-Charpin conjecture. Although the convergence is slow and explicit estimates for large $N$ remain difficult, the data provide experimental evidence consistent with the conjectural picture.

\section{Concluding remarks}
The permutation automorphism group of an irreducible cyclic code always contains a well-defined subgroup of the affine group generated by the cyclic shift and by the Frobenius permutations that permute the defining zeros of the code. We refer to these as standard permutation automorphisms; all other permutation automorphisms are called non-standard. An irreducible cyclic code is correspondingly called standard if all of its permutation automorphisms are standard, and non-standard otherwise.

In this paper, we classify the non-standard non-degenerate irreducible cyclic codes over finite fields. Our main theorem (Theorem \ref{thm_mainCode}) gives a complete classification: apart from a small number of explicit exceptional families and their descendants under certain natural constructions, every non-degenerate irreducible cyclic code is standard.

As applications, we obtained a general description of non-standard linear recurring sequence subgroups, extending earlier work of Brison and Nogueira \cites{Brison2003,Brison2008,Brison2009,Brison2010,Brison2014,Brison2021}, and we confirmed the Schmidt-White conjecture \cite{Schmidt2002} for all NSIC codes. These results show that the exceptional codes with extra symmetries are very limited in nature.

There remain a number of natural problems for further study. One is to extend the classification of standard and non-standard codes to more general cyclic codes, including degenerate and reducible cases, or codes with $\gcd(n,q)\ne 1$. We expect that, except for a few rare examples, the automorphism group should always lie inside the affine group. Another is to determine the permutation automorphism groups of cyclic codes in full generality. Even in the irreducible case with $\gcd(n,q)=1$, while we have identified the non-standard situations, the precise structure of $\PAut(C)$ in general remains to be described. We also note that the determination of automorphism groups is much more complicated if $\gcd(n,q)\neq 1$, and it may be out of reach to obtain a complete classification in that generality.

Finally, it would be interesting to investigate to what extent the methods developed here can be applied to other classes of codes, such as quasi-cyclic codes, and to understand more systematically the relation between group actions and code parameters. More broadly, the connection between irreducible cyclic codes, linear recurring sequences, and permutation group theory suggests further applications, for example, to the study of weight distributions or to other open conjectures related to two-weight codes.

\section*{Acknowledgments}
The authors are grateful to Professor Cai Heng Li for his helpful comments and valuable suggestions. 

\bibliography{NSIC}

%

\end{document}